\newtheorem{theorem}{Theorem}[section]
\newtheorem{thmx}{Theorem}
\newtheorem{proposition}[theorem]{Proposition}
\newtheorem{lemma}[theorem]{Lemma}
\newtheorem{claim}{Claim}
\theoremstyle{definition}
\newtheorem{definition}[theorem]{Definition}
\newtheorem{remark}[theorem]{Remark}
\newtheorem{fact}{Fact}
\subjclass[2020]{46B20; 46B26; 51F30}
\title{A complete metric space without non-trivial separable Lipschitz retracts.}
\keywords{Lipschitz retractions}
\author{Petr H\'ajek}\thanks{This research was supported by CAAS CZ.02.1.01/0.0/0.0/16-019/0000778 and by the project SGS21/056/OHK3/1T/13.}
\address[P. H\'ajek]{Czech Technical University in Prague, Faculty of Electrical Engineering.
Department of Mathematics, Technick\'a 2, 166 27 Praha 6 (Czech Republic)}
\email{hajek@math.cas.cz}
\author{Andr\'es Quilis}\thanks{The second author's research has been supported by PAID-01-19}
\address[A. Quilis]{Universitat Polit\`ecnica de Val\`encia. Instituto Universitario de Matem\'atica Pura y Aplicada, Camino de Vera, s/n
46022 Valencia (Spain); and Czech Technical University in Prague, Faulty of Electrical Engineering. Department of Mathematics, Technick\'a 2, 166 27 Praha 6 (Czech Republic)}
\email{anquisan@posgrado.upv.es}
\begin{document}
\begin{abstract}
    We construct a complete metric space $M$ of cardinality continuum such that every non-singleton 
 closed separable subset of $M$ fails to be a Lipschitz retract of $M$. This provides a metric analogue to
 the various classical and recent examples of Banach spaces failing to have linearly complemented subspaces of prescribed smaller density character.
 \end{abstract}
\maketitle

\section{Introduction}
Given two metric spaces $M$ and $N$, a map $F\colon M\rightarrow N$ is said to be \emph{Lipschitz} if there exists a constant $C>0$ such that $d\big(F(x),F(y)\big)\leq C d(x,y)$ for all $x,y\in M$. The \emph{Lipschitz constant} of $F$, denoted $\|F\|_\text{Lip}$, is the smallest number verifying this inequality, i.e.:
$$ \|F\|_\text{Lip}=\sup \bigg\{\frac{d\big(F(x),F(y)\big)}{d(x,y)}\colon x,y\in M,~ x\neq y\bigg\}.$$
We say that a map $F$ is $K$-Lipschitz for $K>0$ if $\|F\|_\text{Lip}\leq K$.

 Given a metric space $M$ and its closed subset $S$, we say that  a Lipschitz map $R\colon M\rightarrow S$ 
is a (Lipschitz) \emph{retraction} from $M$ onto $S$ if
 $R(x)=x$ for all $x\in S$. If there exists a $K$-Lipschitz retraction $R\colon M\rightarrow S$ for some $K\geq 1$, then we say that $S$ is a \emph{$K$-Lipschitz} retract of $M$. Every singleton is trivially a Lipschitz retract in every metric space. 

A search for nontrivial retracts is very natural, as they provide the grip on the structure of the original metric space $M$. In the linear setting (when the metric spaces
are Banach spaces and retractions are bounded projections) the study of projections (i.e complemented subspaces) is one of the main themes of the theory.
The main result of this paper can  be stated as follows.

\begin{thmx}
\label{maintheorem}
There exists a complete metric space $M$ of cardinality continuum such that every non-singleton 
closed separable subset of $M$ fails to be a Lipschitz retract of $M$.
\end{thmx}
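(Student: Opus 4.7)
The plan is to construct $M$ by transfinite recursion of length $\mathfrak{c}$ as the metric completion of an increasing union $\bigcup_{\alpha<\mathfrak{c}}M_\alpha$ of connected metric spaces, each of cardinality strictly less than $\mathfrak{c}$. Since any closed separable subset of a space of cardinality $\mathfrak{c}$ is the closure of a countable subset and $\mathfrak{c}^{\aleph_0}=\mathfrak{c}$, there are at most $\mathfrak{c}$ pairs $(D,K)$ to defeat, where $D$ is a countable subset of $M$ and $K\in\mathbb{Q}_{\geq 1}$. By a standard diagonal book-keeping, I enumerate these pairs as $(D_\alpha,K_\alpha)_{\alpha<\mathfrak{c}}$ in such a way that $D_\alpha\subseteq M_\alpha$ at the time stage $\alpha$ is processed; at stage $\alpha+1$ I adjoin to $M_\alpha$ a small system of new points so as to destroy the possibility of a $K_\alpha$-Lipschitz retraction from the final $M$ onto $\overline{D_\alpha}$.

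The geometric killing step exploits the following criterion: any $K$-Lipschitz retraction $R\colon M\to S$ must, for each $p\in M$, produce a value $y=R(p)\in S$ satisfying both the pointwise inequalities $d(y,z)\leq K d(p,z)$ for all $z\in S$ and the global Lipschitz conditions $d(R(p),R(q))\leq K d(p,q)$. Violating these reduces to one of two obstructions: either inserting a single ``branching point'' $p_\alpha$ for which no $y\in\overline{D_\alpha}$ can meet the pointwise inequalities (effective when $K_\alpha$ is close to $1$, via an almost-midpoint configuration between two well-separated points of $D_\alpha$), or inserting a nearby pair or short sequence of new points whose only admissible images in $\overline{D_\alpha}$ lie too far apart, producing a violation of the global condition that scales with $K_\alpha$. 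By keeping every $M_\alpha$ connected (for instance a weighted graph with its geodesic metric), we simultaneously ensure, by the standard separation argument, that no two-point subset of the completion $M$ can be a Lipschitz retract.

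The principal obstacle will be the \emph{robustness} of each killing step: the closure $\overline{D_\alpha}$ is taken in the final $M$, so later stages might adjoin new limit points to $D_\alpha$, or create further short-distance configurations, that could revive a previously forbidden retraction. To avoid this, the distances between the points installed at stage $\alpha+1$ and all points added at subsequent stages must be controlled throughout the transfinite construction, via a coherent, triangle-inequality-respecting extension of the metric, so as to preserve the obstructions. Verifying that the emptiness of the set of admissible values $R(p_\alpha)\in\overline{D_\alpha}$ persists until the end of the construction---and that the resulting limit space $M$ is complete, connected, and of cardinality exactly $\mathfrak{c}$---is where the technical heart of the proof will lie.
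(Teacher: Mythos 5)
Your plan is a diagonalization over all pairs $(D_\alpha,K_\alpha)$, installing one obstruction per target set; the paper does something architecturally different and, more importantly, your proposal is missing the one idea that makes any such construction work: a concrete mechanism for defeating a $K$-Lipschitz retraction onto an \emph{arbitrary} separable closed set when $K$ is large. Your ``pointwise'' obstruction provably cannot succeed once $K\geq 3$: for any $p\in M$ and any $z_0\in S$ with $d(p,z_0)\leq 2\,d(p,S)$ one has $d(z_0,z)\leq 3\,d(p,z)$ for all $z\in S$, so the set of pointwise-admissible values for $R(p)$ is never empty. The obstruction must therefore be global, which forces you to connect new points to two well-separated points of $S$ by chains whose consecutive distances are small (otherwise the retraction can collapse all new points to a single point of $S$), and then to prove that \emph{no} $K$-Lipschitz map from such a chain into $S$ can fix the two endpoints. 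Ruling out all such maps into an arbitrary separable target is the entire technical content of the paper (the threads $T_\gamma$, the gap-jumping lemmas, and Theorem \ref{nolipschitzmap}, which needs a continuum of candidate chains indexed by $\Delta$ so that one with sufficiently small gaps can be selected \emph{after} $S$ and $K$ are given); your proposal has no substitute for it, and ``a nearby pair or short sequence of new points whose only admissible images lie too far apart'' does not survive contact with a retraction that is free to send all new points to one point of $S$.

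The robustness problem you flag at the end is also genuinely unresolved rather than a routine verification, and it is precisely what the paper's architecture is designed to avoid. Because your construction must keep adding points close to existing ones (to build the chains above), the closure $\overline{D_\alpha}$ computed in the final $M$ acquires points of arbitrarily late stages, and the ``forbidden'' configurations installed at stage $\alpha$ can acquire new admissible images; controlling this across $\mathfrak{c}$ stages is not a bookkeeping matter but the heart of the difficulty. The paper sidesteps it by making the obstruction uniform --- every pair of points at distance $\leq 1/2$ anchors a threading space containing a thread $T_\gamma$ for \emph{every} gap sequence $\gamma\in\Delta$ --- and then, given a putative retraction $R$ onto $S$ with constant $K$, choosing a single thread $T^*$ a posteriori and running a transfinite order-reduction argument (Claim \ref{reduceorderlemma}) to push $R|_{T^*}$ down into a fixed countable-order skein where Theorem \ref{nolipschitzmap} applies. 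Two further points: closed separable subsets of the completion are closed subsets of closures of countable sets from $\bigcup_\alpha M_\alpha$, not necessarily such closures themselves, so your enumeration does not directly cover them; and your insistence on connected (geodesic) stages works against you, since the gap-based arguments that make the global obstruction possible require the targets and chains to be totally disconnected --- connectedness only buys you the two-point case, which is the trivial one.
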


The basic ingredient for our construction is provided by a modified  example from Theorem 3.7 in \cite{HajQui22}. In the first two sections  we generalize some of the arguments
from \cite{HajQui22} and then we pass to the transfinite construction of the final example $M$. Our construction is self-contained but quite technical.

Let us put this example into context of the mentioned (linear or non-linear) Banach space situation.
In nonlinear Banach space theory, the study of the Lipschitz structure of Banach spaces is a classical topic with many deep results and open questions (we refer to \cite{BenLin00} for a comprehensive exposition of nonlinear Banach space theory). One of such important problems, going back at least to the seminal paper \cite{Lin64} by J. Lindenstrauss is whether every Banach space is a Lipschitz retraction of its bidual. In \cite{Kal11}, N. Kalton proved that this fails for nonseparable spaces, while the separable case remains open. Let us briefly discuss a consequence of this conjecture for separable Banach spaces, which illustrates one of the main motivations the result in this paper:

Given a Banach space $X$ and $\lambda\geq 0$, a subspace $Y$ of $X$ is said to be \emph{$\lambda$-locally complemented} if $Y^{**}$ is linearly $\lambda$-complemented in $X^{**}$. It follows from the J. Lindenstrauss and L. Tzafriri characterization of Hilbert spaces by closed subspaces with the Compact Extension Property (CEP) (in \cite{LinTza71}) and the equivalence between local complementability and the CEP (due to Kalton in \cite{Kal84}) that in a non-Hilbert Banach space there always exist separable closed subspaces that are not locally complemented. 

It can be shown that every Lipschitz retract of a Banach space is locally complemented (see for instance \cite{HajQui22}). Conversely, if a subspace $Y$ is locally complemented in a Banach space $X$, then $Y$ is a Lipschitz retract of $X$ whenever $Y$ is a Lipschitz retract of its bidual $Y^{**}$. This follows directly considering the restriction of the map $P\circ R_Y\colon X^{**}\rightarrow Y$ to $X$, where $P\colon X^{**}\rightarrow Y^{**}$ is a linear projection onto $Y^{**}$ and $R_Y\colon Y^{**}\rightarrow Y$ is a Lipschitz retraction onto $Y$. 

This is an interesting consequence because every Banach space has a relatively rich structure of locally complemented subspaces of any density character. Indeed, it is a classic result of S. Heinrich and P. Mankiewicz (\cite{HeiMan82}) that in every Banach space $X$, given a closed subspace $Z$ one can always find a closed subspace $Y$ containing $Z$ and with $\text{dens}(Y)=\text{dens}(Z)$ such that $Y$ is $1$-locally complemented in $X$. 

On the other hand, the situation is quite different for the linear structure of Banach spaces. It is well-known
(see \cite{Lin67}) that all infinite-dimensional complemented subspaces of the classical non-separable Banach space $\ell_\infty$ are again isomorphic to $\ell_\infty$, in particular they are non-separable.
A remarkable recent result of P. Koszmider, S. Shelah and M. \'{S}wi\c{e}tek in \cite{KosSheSwi18} shows that, assuming the Generalized Continuum Hypothesis, for every cardinal $\kappa$ there exists a compact topological space $K$ such that the Banach space $C(K)$ does not have any non-trivial complemented subspaces of density character less than or equal to $\kappa$. 

Hence, at least in the separable case, the Lipschitz retractional structure of a Banach space could range from being as rich as the locally complemented structure (if the long standing conjecture of separable Banach spaces being Lipschitz retracts of their bidual holds), to being more similar to the linearly complemented case.

The main result of this paper shows that, in the more general setting of metric spaces where Lipschitz functions are the natural morphisms, there exist metric spaces with no non-trivial separable Lipschitz retracts. The density character of the metric space we construct is the continuum.  

To finish the discussion about the context of this result, we will touch on how it relates to the current research on Lipschitz-free Banach spaces. Given a complete metric space $M$ with a fixed distinguished point $0\in M$, the vector space $\text{Lip}_0(M)=\{f\colon M\rightarrow \mathbb{R}\colon f\text{ is Lipschitz and }f(0)=0\}$ with the norm given by the Lipschitz constant is a Banach space. This Banach space is a dual space, and the \emph{Lipschitz-free} space associated to $M$ is the canonical predual $\mathcal{F}(M)=\overline{\text{span}}\{\delta(x)\colon x\in M\}\subset \text{Lip}_0(M)^*$ of $\text{Lip}_0(M)$, where $\delta(x)\colon \text{Lip}_0(M)\rightarrow \mathbb{R}$ is the Dirac measure at the point $x\in M$ verifying $\langle f,\delta(x)\rangle=f(x)$ for all $f\in \text{Lip}_0(M)$. This Banach space is also known as the Arens-Eells space (defined  in \cite{AreEel56}). It  has been studied extensively in the last decades, especially after the publication of \cite{GodKal03} by G. Godefroy and N. Kalton in 2003.

The main property of Lipschitz-free spaces is the fact that given two metric spaces $M,N$ and a Lipschitz map $F\colon M\rightarrow N$ there exists a linear operator $\widehat{F}\colon \mathcal{F}(M)\rightarrow\mathcal{F}(N)$ with norm $\|\widehat{F}\|=\|F\|_\text{Lip}$ such that $\widehat{F}\circ \delta_M=\delta_N\circ F$. If the space $N$ is a subset of $M$, then the identity map restricted to $N$ yields an isometric embedding of $\mathcal{F}(N)$ into $\mathcal{F}(M)$, and if $R\colon M\rightarrow N$ is a Lipschitz retraction, the associated map $\widehat{R}\colon \mathcal{F}(M)\rightarrow\mathcal{F}(N)$ is a linear and bounded projection from $\mathcal{F}(M)$ onto $\mathcal{F}(N)$ (when the latter is considered as a subspace of the former). This fact implies that the Lipschitz retractional structure of a metric space passes on to the linear structure of the associated Lipschitz-free space. 

The linear structure of Lipschitz-free spaces has been an active topic of research in the past two decades, starting with \cite{GodKal03}, where it is proven that every separable Banach space can be seen as a $1$-complemented linear subspace of its Lipschitz-free space. Other recent results include for instance the fact that every Lipschitz-free space of a metric space $M$ contains a complemented copy of $\ell_1(\Gamma)$, where $\Gamma$ is the density character of $M$ (see \cite{CutDouWoj16} and \cite{HajNov17}), and that there exists a universal constant $K\geq 1$ such that $\mathcal{F}(C)$ is $K \sqrt{N}$-linearly complemented in $\mathcal{F}(\mathbb{R}^N)$ for every closed subset $C$ of the $N$-dimensional euclidean space $\mathbb{R}^N$ (\cite{LanPer13}). A very small sample of articles that deal with the structure of Lipschitz-free spaces include \cite{AliNouPetPro21,Dal15,DalKauPro16,DutFer06,GodOza14,Kal04,Kal11,Kau15}. 

A Banach space $X$ has the \emph{Separable Complementation Property} (SCP for short) if every separable subspace $Z$ is contained in a separable subspace which is complemented in $X$. A Banach space is said to be \emph{Plichko} if there exists a pair $(\Delta,N)$ where $\Delta$ is a linearly dense subset of $X$ and $N$ is a norming subspace of $X^*$ such that the set $\{x\in \Delta\colon \langle f,x\rangle\neq 0\}$ is countable for all $f\in N$. All Plichko Banach spaces have the SCP, and we do not know of any examples of Lipschitz-free spaces (over a metric space) which fail to be Plichko. 

To obtain such an example, a necessary (but not sufficient) condition on the underlying metric space is of course that it must fail to have a nontrivial separable Lipschitz retractional structure. 
We do not know if the Lipschitz-free space associated to the metric space constructed in this article is Plichko (or even if it has the SCP).

The space $\mathcal{F}(\ell_\infty)$ could be a natural candidate for a Lipschitz-free space failing to have the SCP and thus failing to be Plichko, and it seems to be unknown at the moment whether $\mathcal{F}(\ell_\infty)$ does indeed fail these properties. However, P. Kaufmann and L. Candido have recently shown in \cite{CanKau21} that its dual space $\text{Lip}_0(\ell_\infty)$ is linearly isomorphic to $\text{Lip}_0(c_0(\textbf{c}))$, where $\textbf{c}$ denotes the cardinality of the continuum. Since $c_0(\textbf{c})$ is Plichko, we have by Corollary 2.9 in \cite{HajQui22} that $\mathcal{F}(c_0(\textbf{c}))$ is Plichko as well. A similar remark to this effect was already made in \cite{CanKau21}.

Finally, let us discuss the structure of this article. As mentioned, the construction of our metric space is self contained though technical. It is divided into three sections, going from section 2 to section 4. In section 2 we define the basic pieces of the construction, called \emph{threads}. These threads are isometric to subsets of one-dimensional circles with the distance given by the arc-length. We will define uncountable families of totally disconnected threads which verify certain metric properties related to Lipschitz functions between these threads. 

In section 3 we will use these uncountable families of threads to define the building blocks of the final metric space. These building blocks are called \emph{threading spaces}, and each block is built from one of the uncountable families defined in the previous section. All threads that form each one of these threading spaces are attached to two anchor points $\{0,1\}$ in the threading space, and every one of these threading spaces verifies the weaker property that every separable subset containing both anchor points is not a Lipschitz retraction of the whole threading space. 

In section 4 we finish by using these threading spaces to construct the final metric space via a transfinite inductive process of length $\omega_1$. We call the resulting complete metric space the \emph{skein space}. Very informally, the skein space verifies that any pair of points behaves as the pair of anchor points of one of the threading spaces constructed in section 3. This way we have that any separable space with more than one point contains two anchor points of a threading space, and hence it is  not a Lipschitz retract of the whole skein. Although the inductive construction of the skein space is relatively straightforward, proving that it verifies the thesis of Theorem \ref{maintheorem} is quite a technical process, and requires introducing some concepts and using a wide array of techniques, all of which are introduced when needed.

\section{Construction of the fundamental pieces: Threads with infinitely many gaps}
\subsection{Threads}
Let $l,a>0$ with $a\leq l$. We say that a metric space $(T,d_{l,a})$ is an \emph{$\mathbb{R}$-thread of length $l$ and width $a$} if $T$ is a closed subset of the real segment $[0,l]$ containing $0$ and $l$, and the metric $d_{l,a}$ is defined by 
$$ d_{l,a}(x,y)=\min\{|x-y|,x+(l-y)+a,y+(l-x)+a\}$$
for every $x,y\in T$. 
Our main example will be constructed inductively by repeated adjoining of metric spaces, isometric to a thread described above, to the previous space. In this sense, the adjoined new pieces are certainly
meant to be distinct sets. However, keeping in mind this feature, there is no danger of confusion if we 
simply call any metric space $T$ a \emph{thread of length $l$ and width $a$} if $T$ is isometric to an $\mathbb{R}$-thread of length $l$ and width $a$ as defined above (and work with it using the above description). 

Let us mention some basic facts about threads. First, notice that every thread is a compact metric space. Also, we may define in every thread $(T,d_{l,a})$ the natural order and the Lebesgue measure since the set $T$ is a subset of the real line. Then, for every $x,y\in T$ with $x\leq y$ we  define the set $[x,y]_T\subset T$ as $[x,y]\cap T$, where $[x,y]$ is the usual real segment. The set $[x,y]_T$  with the inherited metric is again a thread. 

If $T$ is a thread of length $l$, we say that a closed subset $I$ of $T$ is an \emph{extended interval} of $T$ if $I$ is of the form $[p,q]_T=[p,q]\cap T$ or $[0,p]_T\cup[q,l]_T$ for a pair of points $p,q\in T$ with $p<q$. In either case, the points $p$ and $q$ are called the \emph{extreme points} of $I$. See Figure \ref{threadrep} for a representation of a thread and the two kinds of extended intervals it contains.  

Notice as well that in a thread of length $l$ and width $a$, the distance between the extreme points $0$ and $l$ is exactly the width $a$. We can also realize that every thread is locally isometric to $T$ with the usual metric inherited from the real line; indeed, if the distance between two points of $T$ is less than the width of the thread, then this distance coincides with the usual metric. As a consequence, we have that if the length and the width of a thread coincide, then the thread is isometric to a subset of the real segment $[0,l]$.

\tikzset{every picture/.style={line width=0.75pt}} 
\begin{figure}
    \centering
    
\begin{tikzpicture}[x=0.75pt,y=0.75pt,yscale=-1,xscale=1]

\draw  [draw opacity=0] (230.92,260.41) .. controls (191.54,201.74) and (206.17,122.1) .. (264.27,81.42) .. controls (323.08,40.24) and (404.16,54.57) .. (445.38,113.43) .. controls (477.16,158.81) and (475.95,217.42) .. (446.76,260.76) -- (338.89,187.99) -- cycle ; \draw   (230.92,260.41) .. controls (191.54,201.74) and (206.17,122.1) .. (264.27,81.42) .. controls (323.08,40.24) and (404.16,54.57) .. (445.38,113.43) .. controls (477.16,158.81) and (475.95,217.42) .. (446.76,260.76) ;
\draw   [fill={rgb, 255:red, 74; green, 50; blue, 226 }  ,fill opacity=1 ] (227.96,260.41) .. controls (227.96,258.77) and (229.29,257.44) .. (230.92,257.44) .. controls (232.56,257.44) and (233.88,258.77) .. (233.88,260.41) .. controls (233.88,262.04) and (232.56,263.37) .. (230.92,263.37) .. controls (229.29,263.37) and (227.96,262.04) .. (227.96,260.41) -- cycle ;
\draw   [fill={rgb, 255:red, 74; green, 50; blue, 226 }  ,fill opacity=1 ] (443.8,260.76) .. controls (443.8,259.12) and (445.13,257.8) .. (446.76,257.8) .. controls (448.4,257.8) and (449.73,259.12) .. (449.73,260.76) .. controls (449.73,262.39) and (448.4,263.72) .. (446.76,263.72) .. controls (445.13,263.72) and (443.8,262.39) .. (443.8,260.76) -- cycle ;
\draw  [fill={rgb, 255:red, 208; green, 2; blue, 27 }  ,fill opacity=1 ] (409.29,80.71) .. controls (409.29,79.07) and (410.62,77.74) .. (412.26,77.74) .. controls (413.89,77.74) and (415.22,79.07) .. (415.22,80.71) .. controls (415.22,82.34) and (413.89,83.67) .. (412.26,83.67) .. controls (410.62,83.67) and (409.29,82.34) .. (409.29,80.71) -- cycle ;
\draw  [fill={rgb, 255:red, 208; green, 2; blue, 27 }  ,fill opacity=1 ] (260.08,82.37) .. controls (260.08,80.74) and (261.4,79.41) .. (263.04,79.41) .. controls (264.67,79.41) and (266,80.74) .. (266,82.37) .. controls (266,84.01) and (264.67,85.33) .. (263.04,85.33) .. controls (261.4,85.33) and (260.08,84.01) .. (260.08,82.37) -- cycle ;
\draw  [fill={rgb, 255:red, 74; green, 50; blue, 226 }  ,fill opacity=1 ] (218.63,244.41) .. controls (218.63,242.77) and (219.95,241.44) .. (221.59,241.44) .. controls (223.22,241.44) and (224.55,242.77) .. (224.55,244.41) .. controls (224.55,246.04) and (223.22,247.37) .. (221.59,247.37) .. controls (219.95,247.37) and (218.63,246.04) .. (218.63,244.41) -- cycle ;
\draw  [fill={rgb, 255:red, 74; green, 50; blue, 226 }  ,fill opacity=1 ] (463.96,211.74) .. controls (463.96,210.1) and (465.29,208.78) .. (466.92,208.78) .. controls (468.56,208.78) and (469.88,210.1) .. (469.88,211.74) .. controls (469.88,213.38) and (468.56,214.7) .. (466.92,214.7) .. controls (465.29,214.7) and (463.96,213.38) .. (463.96,211.74) -- cycle ;
\draw  [draw opacity=0] (262.69,82.54) .. controls (263.21,82.16) and (263.74,81.79) .. (264.27,81.42) .. controls (309.64,49.65) and (368.26,50.92) .. (411.61,80.18) -- (338.89,187.99) -- cycle ; \draw  [color={rgb, 255:red, 208; green, 2; blue, 27 }  ,draw opacity=1 ] (262.69,82.54) .. controls (263.21,82.16) and (263.74,81.79) .. (264.27,81.42) .. controls (309.64,49.65) and (368.26,50.92) .. (411.61,80.18) ;
\draw  [draw opacity=0] (466.9,210.91) .. controls (463.75,228.58) and (456.97,245.6) .. (446.76,260.76) -- (338.89,187.99) -- cycle ; \draw  [color={rgb, 255:red, 74; green, 50; blue, 226 }  ,draw opacity=1 ] (466.9,210.91) .. controls (463.75,228.58) and (456.97,245.6) .. (446.76,260.76) ;
\draw  [draw opacity=0] (230.92,260.41) .. controls (227.57,255.41) and (224.6,250.26) .. (222.03,244.99) -- (338.89,187.99) -- cycle ; \draw  [color={rgb, 255:red, 74; green, 50; blue, 226 }  ,draw opacity=1 ] (230.92,260.41) .. controls (227.57,255.41) and (224.6,250.26) .. (222.03,244.99) ;
\draw  [dash pattern={on 4.5pt off 4.5pt}]  (253,260) -- (427,260) ;
\draw [shift={(430,260)}, rotate = 180] [fill={rgb, 255:red, 0; green, 0; blue, 0 }  ][line width=0.08]  [draw opacity=0] (5.36,-2.57) -- (0,0) -- (5.36,2.57) -- cycle    ;
\draw [shift={(250,260)}, rotate = 0] [fill={rgb, 255:red, 0; green, 0; blue, 0 }  ][line width=0.08]  [draw opacity=0] (5.36,-2.57) -- (0,0) -- (5.36,2.57) -- cycle    ;

\draw (245.46,64.38) node [anchor=north west][inner sep=0.75pt]    {$p$};
\draw (419.46,61.71) node [anchor=north west][inner sep=0.75pt]    {$q$};
\draw (200.79,233) node [anchor=north west][inner sep=0.75pt]    {$x$};
\draw (476.12,201) node [anchor=north west][inner sep=0.75pt]    {$y$};
\draw (216.39,262.78) node [anchor=north west][inner sep=0.75pt]    {$0$};
\draw (456.39,260.38) node [anchor=north west][inner sep=0.75pt]    {$l$};
\draw (338,233.4) node [anchor=north west][inner sep=0.75pt]    {$a$};

\end{tikzpicture}
\caption{Thread of length $l$ and width $a$. The red and blue lines correspond to the two kinds of extended intervals possible in a thread.}
\label{threadrep}
\end{figure}
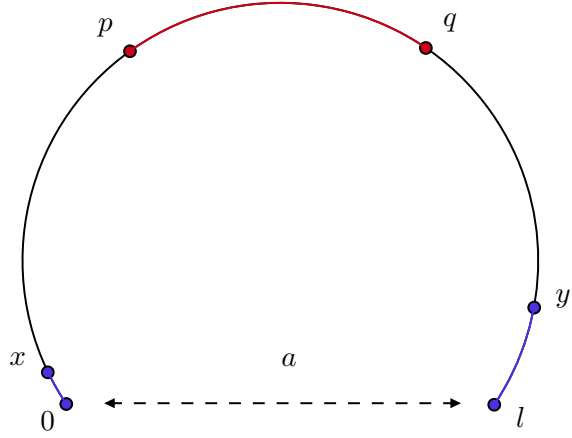

The way we compute the distance in threads implies that Lipschitz functions from threads into other metric spaces are similar to Lipschitz functions from intervals. Specifically we have the following result:
\begin{proposition}
\label{domainthreadisinterval}
Let $T$ be a thread of length $l_T$ and width $a_T$, let $M$ be a metric space, and let $K\geq 0$. A function $F\colon T\rightarrow M$ is $K$-Lipschitz if and only if $d(F(0),F(l_T))\leq K a_T$, and for every $x,y\in T$ we have $d\big(F(x),F(y)\big)\leq K|y-x|$.
\end{proposition}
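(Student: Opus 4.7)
The plan is straightforward: unwinding the definition of $d_{l,a}$, the conditions in the proposition are nothing more than the Lipschitz condition tested on the two extreme kinds of pairs of points.

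For the forward implication, I would first note that from the formula $d_{l,a}(0,l_T)=\min\{l_T,a_T,l_T+a_T\}=a_T$ (since $a_T\leq l_T$), whence $d(F(0),F(l_T))\leq K\, d_{l,a}(0,l_T)=Ka_T$. Also, for arbitrary $x,y\in T$, the very definition of $d_{l,a}$ gives $d_{l,a}(x,y)\leq|x-y|$, so the $K$-Lipschitz condition forces $d(F(x),F(y))\leq K|x-y|$. Both necessary conditions drop out immediately.

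For the converse, I would fix $x,y\in T$ and, without loss of generality, assume $x\leq y$. A short computation shows that in this case $y+(l_T-x)+a_T\geq x+(l_T-y)+a_T$, so
\[
d_{l,a}(x,y)=\min\bigl\{\,y-x,\ x+(l_T-y)+a_T\,\bigr\}.
\]
I would then split into two cases according to which term realizes the minimum. If the minimum is $y-x$, then $d(F(x),F(y))\leq K|x-y|=K\,d_{l,a}(x,y)$ by the second hypothesis. If the minimum is $x+(l_T-y)+a_T$, apply the triangle inequality in $M$ along the path $F(x)\to F(0)\to F(l_T)\to F(y)$:
\[
d(F(x),F(y))\leq d(F(x),F(0))+d(F(0),F(l_T))+d(F(l_T),F(y)).
\]
The first and third summands are bounded by $Kx$ and $K(l_T-y)$ by the second hypothesis (since $0,l_T\in T$), while the middle summand is bounded by $Ka_T$ by the first hypothesis. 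Adding these yields exactly $K\bigl(x+(l_T-y)+a_T\bigr)=K\,d_{l,a}(x,y)$, completing the proof.

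There is no real obstacle: the content of the proposition is just that a thread's metric is obtained from the linear metric on $[0,l_T]$ by adding a single extra ``shortcut'' of length $a_T$ connecting the endpoints, so the Lipschitz constant is controlled by checking the linear bound and the bound across the shortcut separately. The only small point worth being careful about is the reduction $x\leq y$ and confirming that the third term in the definition of $d_{l,a}$ is redundant in that case.
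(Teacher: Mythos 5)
Your proposal is correct and follows essentially the same route as the paper's proof: the forward direction drops out of $d_{l,a}(x,y)\leq|x-y|$ and $d_{l,a}(0,l_T)=a_T$, and the converse splits on which term realizes the minimum, using the triangle inequality through $F(0)$ and $F(l_T)$ in the wrap-around case. Your extra remark that the third term in the definition is redundant when $x\leq y$ is a minor clarification the paper leaves implicit.
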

\begin{proof}
Evidently, if $F$ is $K$-Lipschitz, we directly obtain that $d(F(0),F(l_T))\leq Ka_T$ and the inequality:
$$d\big(F(x),F(y)\big)\leq Kd(x,y)\leq K|y-x|. $$

Suppose now that the inequality is true for every pair of points in $T$, and take $x\leq y\in T$. If $d(x,y)=y-x$, then we obtain directly that $d\big(F(x),F(y)\big)\leq Kd(x,y)$. Otherwise, we have that $d(x,y)=x+a_T+(l_T-y)$. Therefore
\begin{align*}
    d\big(F(x),F(y)\big)&\leq d\big(F(x),F(0)\big)+d\big(F(0),F(l_T)\big)+d\big(F(l_T),F(y)\big)\\
    &\leq K(|x|+a_T+|l_T-y|)=Kd(x,y).
\end{align*}
Hence, $F$ is $K$-Lipschitz.
\end{proof}

\subsection{Lipschitz functions between threads with gaps.}
In a thread $T$, we say that a non-trivial open interval $(x,y)\subset \mathbb{R}$ is a \emph{gap} of $T$ if $x,y\in T$ and $(x,y)\cap T=\emptyset$. The points $x,y$ of a gap $C=(x,y)$ in a thread $T$ are called the \emph{endpoints} of $C$, and the value $d(x,y)$ is the \emph{length} of the gap. It is readily seen that a closed subset of $\mathbb{R}$ can have at most countably many distinct gaps. Hence, given any complete thread $T\subset\mathbb{R}$, we may consider the sequence $\{C^T_k\}_{k\in\mathbb{N}}$ of gaps in $T$. Moreover, since every thread $T$ is bounded, its sequence of gaps can be ordered so that $\text{length}(C^T_{k+1})\leq\text{length}(C^T_k)$ for all $k\in\mathbb{N}$. 

We are going to study in detail the behavior of Lipschitz maps between threads with infinitely many gaps.  We  have the following property.

\begin{lemma}
\label{lipconstinthreadswithnogaps}
Let $T$ and $S$ be two threads of length $l_T,l_S$ and width $a_T,a_S$ respectively. Let $K\geq 1$, and suppose that there is no gap in $T$ with length greater than or equal to $a_S/K$. Then for every $K$-Lipschitz function $F\colon T\rightarrow S$ we have that 

$$|F(q)-F(p)|\leq K |q-p|,\qquad\text{ for all }p,q\in T. $$
\end{lemma}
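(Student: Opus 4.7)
The plan is to prove the inequality by a sup/continuity argument on the set of points where it already holds. The key preliminary observation is that the ``wrap-around'' terms in the thread metric of $S$ are each at least $a_S$, so for any $u,v\in S$ with $d_S(u,v)<a_S$ one automatically has $d_S(u,v)=|u-v|$. This is the bridge between the thread metric on $S$ (controlled by the Lipschitz hypothesis) and the absolute value (which is what we want to bound).

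Fix $p\leq q$ in $T$ and set
\[ A=\{x\in T\cap[p,q] : |F(x)-F(p)|\leq K(x-p)\}. \]
First I would check that $A$ is non-empty (it contains $p$) and closed in $T\cap[p,q]$: if $x_n\in A$ and $x_n\to x$, then by the Lipschitz property $d_S(F(x_n),F(x))\to 0$, and once this quantity drops below $a_S$ the preliminary observation forces $|F(x_n)-F(x)|=d_S(F(x_n),F(x))$, so $F(x_n)\to F(x)$ in absolute value and the defining inequality passes to the limit.

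Let $s=\sup A$, so $s\in A$ by closedness. The heart of the argument is to derive a contradiction from $s<q$. Using the gap hypothesis I would produce an $x\in T$ with $s<x\leq q$ and $x-s<a_S/K$: if $q-s<a_S/K$ take $x=q$; otherwise $(s,s+a_S/K)\cap T$ must be non-empty, for if it were empty the gap of $T$ with left endpoint $s$ would have length at least $a_S/K$, contradicting the assumption. Since $d_T(x,s)\leq x-s<a_S/K$ (a thread distance is always bounded by the ambient real distance), the Lipschitz property gives $d_S(F(x),F(s))\leq K(x-s)<a_S$; the preliminary observation upgrades this to $|F(x)-F(s)|\leq K(x-s)$. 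Combining with $s\in A$ by the triangle inequality for $|\cdot|$ yields $|F(x)-F(p)|\leq K(x-p)$, so $x\in A$, contradicting $x>s=\sup A$. Hence $s=q$ and the inequality holds.

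The only delicate step is the existence of the intermediate point $x$; everything else is a straightforward continuity/triangle-inequality manipulation, once one notices that the gap hypothesis is precisely what allows small jumps in the real-line direction to be realised by small jumps in $d_T$, hence in $d_S$, hence (staying below the threshold $a_S$) in $|\cdot|$.
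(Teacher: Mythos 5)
Your proof is correct. It rests on exactly the same key observation as the paper's — namely that the wrap-around terms in the thread metric of $S$ are each at least $a_S$, so $d_S(u,v)<a_S$ forces $d_S(u,v)=|u-v|$ — but you organize the argument differently. The paper produces a finite increasing chain $p=x_1<\dots<x_n=q$ in $T$ with $d(x_k,x_{k+1})<a_S/K$ (whose existence it asserts without detail, relying on the absence of large gaps) and then telescopes with the triangle inequality for $|\cdot|$. You instead run a supremum-and-connectedness argument on the set $A$ of points where the inequality already holds, and your case analysis for producing the intermediate point $x$ with $s<x\le q$ and $x-s<a_S/K$ is precisely the detail the paper glosses over: if $(s,s+a_S/K)\cap T$ were empty, the gap of $T$ starting at $s$ would be too long. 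So your route is marginally longer but fills in the one step the paper takes for granted; the paper's chain argument is shorter once that step is granted, and the two are otherwise interchangeable. One small point worth stating explicitly in your closedness argument: Euclidean convergence $x_n\to x$ implies $d_T(x_n,x)\to 0$ because the thread metric is dominated by the Euclidean one — you use this fact later for $d_T(x,s)\le x-s$, so it is available, but the closedness step silently relies on it as well.
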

\begin{proof}
For any pair of points $p,q\in T$ with $p\leq q$ there exists an increasing finite sequence $(x_k)_{k=1}^n\subset T$ with $x_1=p$ and $x_n=q$ such that $d(x_k,x_{k+1})< a_S/K$ for all $1\leq k\leq n-1$. This implies that $d\big(F(x_{k+1}),F(x_k)\big)=\big|F(x_{k+1})-F(x_k)\big|$.

Hence, we have 
\begin{align*}
    |F(q)-F(p)|&\leq \sum_{k=1}^n\big|F(x_{k+1})-F(x_k)\big|=\sum_{k=1}^nd\big(F(x_{k+1}),F(x_k)\big)\\
    &\leq K \sum_{k=1}^nd(x_{k+1},x_k)\leq K\sum_{k=1}^n (x_{k+1}-x_k)=K(q-p).
\end{align*}
The result is proven.

\end{proof}

Next, we are going to prove an elementary proposition which will allow us to assume without loss of generality that the Lipschitz maps we consider are non-decreasing.
\begin{proposition}
\label{wlogFisnondecreasing}
Let $K\geq 1$. Let $T,S$ be two threads of length $l_T, l_S$ and width $a_T,a_S$ respectively, and let $F\colon T\rightarrow S$ be a $K$-Lipschitz function such that $F(0)=0$ and $F(l_T)=l_S$. Then there exists a non-decreasing Lipschitz function $\widehat{F}\colon T\rightarrow S$ with $\|\widehat{F}\|_\text{Lip}\leq\|F\|_\text{Lip}$ such that $\widehat{F}(0)=0$ and $\widehat{F}(l_T)=l_S$.
\end{proposition}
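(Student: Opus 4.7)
The plan is a case split on the gap structure of $T$, using Lemma~\ref{lipconstinthreadswithnogaps} to dispose of one side and an explicit jump function on the other. In either case the non-decreasing $\widehat F$ will have values in the distinguished set $\{0,l_S\}\cup F(T)\subseteq S$.

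\emph{Case A}: $T$ has a gap $(a,b)$ with $b-a\ge a_S/K$. Then I set
\[
\widehat F(x)=\begin{cases} 0 & \text{if } x\in T\cap[0,a],\\ l_S & \text{if } x\in T\cap[b,l_T].\end{cases}
\]
The two pieces cover $T$ because $(a,b)\cap T=\emptyset$. The function is non-decreasing, takes values in $\{0,l_S\}\subseteq S$, and satisfies $\widehat F(0)=0$, $\widehat F(l_T)=l_S$. The only nontrivial Lipschitz bound is for $p\le a<b\le q$, where
\[
d_S(\widehat F(p),\widehat F(q))=d_S(0,l_S)=a_S\le K(b-a)\le K(q-p).
\]

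\emph{Case B}: every gap of $T$ has length strictly less than $a_S/K$. By Lemma~\ref{lipconstinthreadswithnogaps}, $F$ is $K$-Lipschitz in the Euclidean metric on $T$, and in particular $l_S=F(l_T)-F(0)\le K l_T$. I extend $F$ across each gap by linear interpolation to obtain a continuous Euclidean $K$-Lipschitz function $\bar F:[0,l_T]\to[0,l_S]$ and apply the classical non-decreasing rearrangement on $[0,l_T]$ with Lebesgue measure, which preserves both the Euclidean Lipschitz constant and the extrema of $\bar F$. The result $\widehat{\bar F}$ is non-decreasing, Euclidean $K$-Lipschitz, with $\widehat{\bar F}(0)=0$ and $\widehat{\bar F}(l_T)=l_S$; restricting to $T$ gives a $\widehat F$ with the same properties. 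Because the thread metric on $S$ is dominated by the Euclidean metric, thread $K$-Lipschitzness follows.

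The main obstacle is in Case B: the classical rearrangement of $\bar F$ a priori takes values in $[0,l_S]$ and not necessarily in $S$, which could be a proper closed subset. This is resolved by a refinement in which, instead of rearranging the continuous extension $\bar F$, one sorts the values of $F$ directly on a finite approximation of $T$ and verifies that the small-gap condition on $T$ together with the Euclidean Lipschitz property of $F$ prevents sorting from increasing the Lipschitz constant; the resulting values all lie in $F(T)\subseteq S$. Standard compactness (Arzel\`a--Ascoli, using that Lipschitz constants are controlled) then produces $\widehat F$ on the whole of $T$, completing the argument.
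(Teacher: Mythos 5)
Your Case A coincides with the paper's and is fine. The problem is Case B. You correctly diagnose that the continuous monotone rearrangement of the interpolated extension $\bar F$ need not take values in $S$, but the repair you propose rests on an assertion that is false as stated: sorting the values of a Euclidean $K$-Lipschitz function over a \emph{non-uniformly spaced} finite set can increase the Lipschitz constant by an arbitrary factor. Concretely, take sample points $0<10<11$ with values $5,\,0,\,1$ and $K=1$: the data are $1$-Lipschitz ($|5-0|\le 10$, $|0-1|\le 1$, $|5-1|\le 11$), but after sorting the point $11$ receives the value $5$ and the point $10$ the value $1$, so the increment over the short step $[10,11]$ becomes $4$. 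Such configurations are compatible with your Case B hypothesis (prepend a stretch on which $F$ climbs from $0$ to $5$, append one climbing to $l_S$, and take $a_S$ large so every gap of $T$ is below $a_S/K$), so the ``small-gap condition plus Euclidean Lipschitzness'' does not by itself prevent sorting from blowing up the constant; the mechanism is precisely the mismatch between Lebesgue measure on $[0,l_T]$ (which governs the continuous rearrangement) and the distribution of points of $T$. A salvage would require restricting to equidistributed approximations and analyzing the rearrangement with respect to the measure of $T$ itself, at which point you would still have to handle isolated points of $T$ (invisible to that measure) and show the limit lands in $S$ at \emph{every} point, not just almost everywhere. None of this is argued, so the Arzel\`a--Ascoli step has nothing correct to pass to the limit.

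The paper avoids all of this with a much lighter device: in Case B it sets $\widehat F(x)=\max_{y\le x}F(y)$. This running maximum is non-decreasing, equals $F(z)$ for some $z\le x$ by compactness of $T$ (hence takes values in $F(T)\subseteq S$), fixes $0$ and $l_S$, and its Lipschitz bound is a two-line consequence of Lemma \ref{lipconstinthreadswithnogaps}: if $\widehat F(q)=F(z)$ with $p<z\le q$, then $\widehat F(q)-\widehat F(p)\le F(z)-F(p)\le K(z-p)\le K(q-p)$. I would recommend replacing your rearrangement scheme by this construction; monotone rearrangement preserves the distribution of values, which is far more than the statement requires, and that extra rigidity is exactly what creates the obstruction you ran into.
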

\begin{proof}
Put $K=\|F\|_\text{Lip}$. Notice that if $T$ has a gap $(p,q)$ of length greater than or equal to $a_S/K$, then the result follows directly putting $\widehat{F}(x)=0$ if $x\leq p$, and $\widehat{F}(x)=l_S$ if $x\geq q$. Suppose then that there are no gaps in $T$ with length greater than or equal to $a_S/K$.  

Now define $\widehat{F}\colon T\rightarrow S$ by
$$ \widehat{F}(x)=\max_{y\leq x} F(y).$$

Clearly, $\widehat{F}$ is non-decreasing with $F\leq \widehat{F}$, $\widehat{F}(0)=0$ and $\widehat{F}(l_T)=l_S$. It only remains to see that $\|\widehat{F}\|_\text{Lip}\leq K$. Using Proposition \ref{domainthreadisinterval}, we only need to prove that given $p,q\in T$ with $p\leq q$, we have
\begin{equation}
\label{NormalsegmentLipConstant}
d\big(\widehat{F}(q),\widehat{F}(p)\big)\leq K(q-p).
\end{equation}

Observe that $\widehat{F}(q)=F(z)$ for some $z\leq q$. If $z\leq p$ we necessarily have that $\widehat{F}(q)=\widehat{F}(p)$ and the equation is trivially verified. Otherwise, using Lemma \ref{lipconstinthreadswithnogaps} with $p\leq z$ yields
\begin{align*}
    d\big(\widehat{F}(q),\widehat{F}(p)\big)&\leq \widehat{F}(q)-\widehat{F}(p)=F(z)-\widehat{F}(p)\\
    &\leq F(z)-F(p)\leq K(z-p)\leq K(q-p).
\end{align*}
and equation \eqref{NormalsegmentLipConstant} is proven.

We conclude that $\|\widehat{F}\|_\text{Lip}\leq K$ and the result is proven. 
\end{proof}

We can also use Lemma \ref{lipconstinthreadswithnogaps} to prove a similar result to the one above.
\begin{proposition}
\label{threadontosubinterval}
Let $T$ and $S$ be two threads with length $l_T$ and $l_S$, and width $a_T$ and $a_S$ respectively. Let $K\geq 1$. Suppose there exists a $K$-Lipschitz function $F\colon T\rightarrow S$ such that $F(0)=A$ and $F(l_T)=B$, for two points $A,B\in S$ with $A<B$. If $T$ does not have any gap of length greater than or equal to $a_S/K$, then the function $\widehat{F}\colon T\rightarrow [A,B]_S$ defined by 
$$
\widehat{F}(x)=
\begin{cases}
A,~&\text{ if }F(x)\leq A\\
F(x), ~&\text{ if }F(x)\in [A,B]_S\\
B,~ &\text{ if }F(x)\geq B
\end{cases}
$$
is $K$-Lipschitz as well.

\end{proposition}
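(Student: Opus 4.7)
The plan is to apply Proposition \ref{domainthreadisinterval} to $\widehat{F}$ with domain $T$, which reduces the claim to two conditions: the \emph{width} estimate $d_S(\widehat{F}(0), \widehat{F}(l_T)) \leq K a_T$, and the \emph{interval} estimate $d_S(\widehat{F}(p), \widehat{F}(q)) \leq K|q-p|$ for every $p, q \in T$. The target $[A,B]_S$ carries the metric inherited from $S$, which is itself a thread metric and in particular is bounded above by the Euclidean absolute value.

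The width estimate will be immediate: since $F(0) = A$ and $F(l_T) = B$ already lie in $[A,B]_S$, the clipping leaves them fixed, so $\widehat{F}(0) = A$ and $\widehat{F}(l_T) = B$; then $d_S(A,B) = d_S(F(0), F(l_T)) \leq K d_T(0, l_T) = K a_T$ follows from $F$ being $K$-Lipschitz.

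The interval estimate is where the gap hypothesis actually enters. Since $T$ has no gap of length $\geq a_S/K$, Lemma \ref{lipconstinthreadswithnogaps} upgrades the Lipschitz bound on $F$ to the stronger real-line inequality $|F(q) - F(p)| \leq K|q-p|$. Then I would observe that the clipping $\phi(t) = \max\{A, \min\{B, t\}\}$ on $\mathbb{R}$ is $1$-Lipschitz for the absolute value and $\widehat{F} = \phi \circ F$, so $|\widehat{F}(q) - \widehat{F}(p)| \leq |F(q) - F(p)| \leq K|q-p|$. Since $d_S \leq |\cdot|$ by definition of the thread metric, this yields $d_S(\widehat{F}(p), \widehat{F}(q)) \leq K|q-p|$, as required. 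I do not foresee any substantive obstacle; the argument is a direct combination of Lemma \ref{lipconstinthreadswithnogaps} with the elementary non-expansiveness of interval truncation on the real line, and the fact that the inherited thread metric on $[A,B]_S$ is dominated by the Euclidean one only helps.
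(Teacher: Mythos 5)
Your proposal is correct and follows essentially the same route as the paper: reduce to the two conditions of Proposition \ref{domainthreadisinterval}, invoke Lemma \ref{lipconstinthreadswithnogaps} to get the real-line bound $|F(q)-F(p)|\leq K|q-p|$, and then observe that truncation to $[A,B]$ does not increase distances (the paper verifies this last point by a representative case analysis rather than by factoring $\widehat{F}$ through the $1$-Lipschitz clipping map, but the content is identical).
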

\begin{proof}
As before, by Proposition \ref{domainthreadisinterval} we only need to check that for every $p,q\in T$ with $p\leq q$, we have:
$$d\big(\widehat{F}(q),\widehat{F}(p)\big)\leq K(q-p). $$
We will only prove the case when $F(p)\leq A$ and $F(q)\in [A,B]_S$, since the remaining possibilities are shown similarly. By Lemma \ref{lipconstinthreadswithnogaps}, we have in this case that
\begin{align*}
    d\big(\widehat{F}(p),\widehat{F}(q)\big)&=d\big(A,F(q)\big)\leq F(q)-A\\
    &\leq F(q)-F(p)\leq K(q-p).
\end{align*}
We conclude that $\|\widehat{F}\|_\text{Lip}\leq K$.

\end{proof}

Let us now give some definitions and prove some technical results which will be heavily used in the proof of the main theorem of the section. Let $T$ and $S$ be two threads, and suppose there is a Lipschitz function $F\colon T\rightarrow S$ which is non-decreasing. We say that a gap $(p_T,q_T)$ in $T$ \textit{jumps over a gap $(p_S,q_S)$ in $S$ with respect to $F$} if $F(p_T)\leq p_S$ and $F(q_T)\geq q_S$ (see Figure \ref{examplegapjumpthread}).

The first lemma we prove says intuitively that if we have a non-decreasing Lipschitz function $F$ between two threads $T$ and $S$ that fixes the extreme points of the threads, then every gap in $S$ must be jumped by a gap in $T$ with respect to $F$. Although this result is fairly intuitive, we include the (simple) proof for completeness.

\begin{lemma}
\label{gapjumpsovergap}
Let $T$ and $S$ be two threads of length $l_T$ and $l_S$ respectively. Suppose that there is a non-decreasing Lipschitz function $F\colon T\rightarrow S$ such that $F(0)=0$ and $F(l_T)=l_S$. Let $C^S$ be a gap in $S$. Then there exists a gap in $T$ that jumps over $C^S$ with respect to $F$.
\end{lemma}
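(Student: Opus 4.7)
The plan is to locate the desired gap in $T$ by choosing its endpoints as the natural sup/inf associated with the target gap in $S$. Write $C^S=(p_S,q_S)$. Define
\[ p_T = \sup\{x\in T : F(x)\leq p_S\},\qquad q_T = \inf\{x\in T : F(x)\geq q_S\}. \]
Both sets are non-empty ($0$ lies in the first since $F(0)=0\leq p_S$, and $l_T$ lies in the second since $F(l_T)=l_S\geq q_S$), so $p_T$ and $q_T$ are well-defined real numbers, and since $T$ is compact they belong to $T$. I would then show $(p_T,q_T)$ is the gap we want.

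First I would verify that $F(p_T)\leq p_S$ and $F(q_T)\geq q_S$. For the former, since $F$ is non-decreasing, every $x\in T$ with $x<p_T$ satisfies $F(x)\leq p_S$ (otherwise $F(y)>p_S$ for all $y\in T$ with $y\geq x$, contradicting the definition of $p_T$). Taking a sequence in $T$ approaching $p_T$ from below, continuity of $F$ yields $F(p_T)\leq p_S$. Here I would briefly justify continuity: $F$ is Lipschitz in the thread metrics, and since the distance formula $d_{l,a}$ forces $d_S(F(x_n),F(p_T))\to 0$ to coincide with $|F(x_n)-F(p_T)|\to 0$ (the other two terms in the minimum are at least $a_S>0$), $F$ is continuous as a map between the Euclidean-metric subsets of $\mathbb{R}$ as well. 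The inequality $F(q_T)\geq q_S$ is symmetric.

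It remains to check that $(p_T,q_T)$ is a genuine gap in $T$. Clearly $p_T<q_T$, because $F(p_T)\leq p_S<q_S\leq F(q_T)$ rules out equality. Suppose towards contradiction that some $z\in T$ satisfies $p_T<z<q_T$. By definition of $p_T$ and $q_T$ together with monotonicity, $F(z)>p_S$ and $F(z)<q_S$, hence $F(z)\in (p_S,q_S)$. But $(p_S,q_S)$ is a gap of $S$, so $(p_S,q_S)\cap S=\emptyset$, contradicting $F(z)\in S$. Thus $(p_T,q_T)\cap T=\emptyset$, so $(p_T,q_T)$ is a gap in $T$, and by construction it jumps over $C^S$ with respect to $F$.

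The only mildly delicate point in this argument is the continuity step, since Lipschitz control in the thread metric does not automatically give continuity in the Euclidean metric on $T\subset\mathbb{R}$; everything else is essentially bookkeeping with monotonicity. Once the distance-formula observation above is made, the rest is immediate.
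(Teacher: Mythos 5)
Your proof is correct and follows essentially the same route as the paper's: both locate the gap in $T$ by taking the extremal points of $\{x\in T: F(x)\leq p_S\}$ and $\{x\in T: F(x)\geq q_S\}$ (the paper phrases this via maxima of image values $e_-,e_+$ and then extremal preimages, but the resulting gap $(x_-,y_+)$ coincides with your $(p_T,q_T)$). Your explicit justification of continuity of $F$ with respect to the Euclidean metric on $S$ is a welcome detail that the paper leaves implicit when asserting the existence of those maxima and minima.
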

\begin{proof}
Define $p_S,q_S\in S$ such that $C^S=(p_S,q_S)$. Consider the points:
\begin{align*}
    e_-&=\max\{F(x)\in S\colon~x\in T,~F(x)\leq p_S\},\\
    e_+&=\min\{F(y)\in S\colon~y\in T,~F(y)\geq q_S\}. 
\end{align*}
These minimum and maximum values always exist since we have that $F(0)=0$ and $F(l_T)=l_S$, and $T$ is compact. Hence, we can find 
\begin{align*}
    x_-&=\max\{x\in T\colon~F(x)=e_-\},\\
    y_+&=\min\{y\in T\colon~F(y)=e_+\}.
\end{align*}

Since $F$ is non-decreasing and $e_-<e_+$, we have that $x_-<y_+$ and $(x_-,y_+)_T=\emptyset$. Moreover, both $x_-$ and $y_+$ belong to $T$ again by compactness, so $(x_-,y_+)$ is a gap in $T$. The gap $(x_-,y_+)$ jumps over $(p_S,q_S)$ with respect to $F$.
\end{proof}

\begin{figure}
\centering

\tikzset{every picture/.style={line width=0.75pt}} 

\begin{tikzpicture}[x=0.75pt,y=0.75pt,yscale=-1,xscale=1]

\draw  [draw opacity=0] (475.12,106.99) .. controls (493.08,112.59) and (509.52,123.72) .. (521.56,139.91) .. controls (545.46,172.05) and (544.77,214.37) .. (522.82,244.74) -- (448.93,192.13) -- cycle ; \draw  [color={rgb, 255:red, 0; green, 0; blue, 0 }  ,draw opacity=1 ] (475.12,106.99) .. controls (493.08,112.59) and (509.52,123.72) .. (521.56,139.91) .. controls (545.46,172.05) and (544.77,214.37) .. (522.82,244.74) ;
\draw    (231.47,114.16) -- (226.89,121.12) ;
\draw    (157.81,99.43) -- (159.89,107.61) ;
\draw  [draw opacity=0] (108.19,245.64) .. controls (107.89,245.22) and (107.59,244.79) .. (107.3,244.36) .. controls (79.19,203.13) and (88.91,146.33) .. (129.03,117.48) .. controls (138.29,110.82) and (148.35,106.24) .. (158.71,103.63) -- (179.93,192.13) -- cycle ; \draw  [color={rgb, 255:red, 0; green, 0; blue, 0 }  ,draw opacity=1 ] (108.19,245.64) .. controls (107.89,245.22) and (107.59,244.79) .. (107.3,244.36) .. controls (79.19,203.13) and (88.91,146.33) .. (129.03,117.48) .. controls (138.29,110.82) and (148.35,106.24) .. (158.71,103.63) ;
\draw  [draw opacity=0] (228.87,117.66) .. controls (237.79,123.46) and (245.85,130.9) .. (252.55,139.91) .. controls (276.46,172.05) and (275.77,214.37) .. (253.81,244.74) -- (179.93,192.13) -- cycle ; \draw   (228.87,117.66) .. controls (237.79,123.46) and (245.85,130.9) .. (252.55,139.91) .. controls (276.46,172.05) and (275.77,214.37) .. (253.81,244.74) ;
\draw  [fill={rgb, 255:red, 0; green, 0; blue, 0 }  ,fill opacity=1 ] (106.17,245.63) .. controls (106.17,244.49) and (107.08,243.56) .. (108.19,243.56) .. controls (109.31,243.56) and (110.21,244.49) .. (110.21,245.63) .. controls (110.21,246.78) and (109.31,247.71) .. (108.19,247.71) .. controls (107.08,247.71) and (106.17,246.78) .. (106.17,245.63) -- cycle ;
\draw  [fill={rgb, 255:red, 0; green, 0; blue, 0 }  ,fill opacity=1 ] (251.87,244.63) .. controls (251.87,243.48) and (252.78,242.55) .. (253.89,242.55) .. controls (255.01,242.55) and (255.91,243.48) .. (255.91,244.63) .. controls (255.91,245.77) and (255.01,246.7) .. (253.89,246.7) .. controls (252.78,246.7) and (251.87,245.77) .. (251.87,244.63) -- cycle ;
\draw  [color={rgb, 255:red, 208; green, 2; blue, 27 }  ,draw opacity=1 ][fill={rgb, 255:red, 208; green, 2; blue, 27 }  ,fill opacity=1 ] (226.98,117.74) .. controls (226.98,116.6) and (227.88,115.67) .. (229,115.67) .. controls (230.11,115.67) and (231.02,116.6) .. (231.02,117.74) .. controls (231.02,118.89) and (230.11,119.82) .. (229,119.82) .. controls (227.88,119.82) and (226.98,118.89) .. (226.98,117.74) -- cycle ;
\draw  [color={rgb, 255:red, 208; green, 2; blue, 27 }  ,draw opacity=1 ][fill={rgb, 255:red, 208; green, 2; blue, 27 }  ,fill opacity=1 ] (156.83,103.59) .. controls (156.83,102.44) and (157.74,101.52) .. (158.85,101.52) .. controls (159.97,101.52) and (160.87,102.44) .. (160.87,103.59) .. controls (160.87,104.73) and (159.97,105.66) .. (158.85,105.66) .. controls (157.74,105.66) and (156.83,104.73) .. (156.83,103.59) -- cycle ;
\draw  [dash pattern={on 4.5pt off 4.5pt}]  (121.47,245.43) -- (238.24,245.43) ;
\draw [shift={(241.24,245.43)}, rotate = 180] [fill={rgb, 255:red, 0; green, 0; blue, 0 }  ][line width=0.08]  [draw opacity=0] (5.36,-2.57) -- (0,0) -- (5.36,2.57) -- cycle    ;
\draw [shift={(118.47,245.43)}, rotate = 0] [fill={rgb, 255:red, 0; green, 0; blue, 0 }  ][line width=0.08]  [draw opacity=0] (5.36,-2.57) -- (0,0) -- (5.36,2.57) -- cycle    ;
\draw  [draw opacity=0] (377.25,245.6) .. controls (376.94,245.19) and (376.62,244.77) .. (376.31,244.35) .. controls (346.54,204.32) and (354.92,148.49) .. (395.02,119.65) .. controls (402.93,113.96) and (411.48,109.75) .. (420.34,106.95) -- (448.93,192.13) -- cycle ; \draw   (377.25,245.6) .. controls (376.94,245.19) and (376.62,244.77) .. (376.31,244.35) .. controls (346.54,204.32) and (354.92,148.49) .. (395.02,119.65) .. controls (402.93,113.96) and (411.48,109.75) .. (420.34,106.95) ;
\draw  [fill={rgb, 255:red, 0; green, 0; blue, 0 }  ,fill opacity=1 ] (375.23,245.6) .. controls (375.23,244.45) and (376.13,243.52) .. (377.25,243.52) .. controls (378.37,243.52) and (379.27,244.45) .. (379.27,245.6) .. controls (379.27,246.74) and (378.37,247.67) .. (377.25,247.67) .. controls (376.13,247.67) and (375.23,246.74) .. (375.23,245.6) -- cycle ;
\draw  [fill={rgb, 255:red, 0; green, 0; blue, 0 }  ,fill opacity=1 ] (520.88,244.63) .. controls (520.88,243.48) and (521.78,242.55) .. (522.9,242.55) .. controls (524.01,242.55) and (524.92,243.48) .. (524.92,244.63) .. controls (524.92,245.77) and (524.01,246.7) .. (522.9,246.7) .. controls (521.78,246.7) and (520.88,245.77) .. (520.88,244.63) -- cycle ;
\draw  [color={rgb, 255:red, 208; green, 2; blue, 27 }  ,draw opacity=1 ][fill={rgb, 255:red, 208; green, 2; blue, 27 }  ,fill opacity=1 ] (515.6,134.01) .. controls (515.6,132.87) and (516.51,131.94) .. (517.62,131.94) .. controls (518.74,131.94) and (519.64,132.87) .. (519.64,134.01) .. controls (519.64,135.16) and (518.74,136.08) .. (517.62,136.08) .. controls (516.51,136.08) and (515.6,135.16) .. (515.6,134.01) -- cycle ;
\draw  [color={rgb, 255:red, 208; green, 2; blue, 27 }  ,draw opacity=1 ][fill={rgb, 255:red, 208; green, 2; blue, 27 }  ,fill opacity=1 ] (383.86,127.32) .. controls (383.86,126.17) and (384.76,125.25) .. (385.88,125.25) .. controls (386.99,125.25) and (387.9,126.17) .. (387.9,127.32) .. controls (387.9,128.47) and (386.99,129.39) .. (385.88,129.39) .. controls (384.76,129.39) and (383.86,128.47) .. (383.86,127.32) -- cycle ;
\draw    (289.46,179.85) -- (336.4,179.85) ;
\draw [shift={(339.4,179.85)}, rotate = 180] [fill={rgb, 255:red, 0; green, 0; blue, 0 }  ][line width=0.08]  [draw opacity=0] (7.14,-3.43) -- (0,0) -- (7.14,3.43) -- cycle    ;
\draw  [dash pattern={on 4.5pt off 4.5pt}]  (393.65,245.34) -- (510.43,245.34) ;
\draw [shift={(513.43,245.34)}, rotate = 180] [fill={rgb, 255:red, 0; green, 0; blue, 0 }  ][line width=0.08]  [draw opacity=0] (5.36,-2.57) -- (0,0) -- (5.36,2.57) -- cycle    ;
\draw [shift={(390.65,245.34)}, rotate = 0] [fill={rgb, 255:red, 0; green, 0; blue, 0 }  ][line width=0.08]  [draw opacity=0] (5.36,-2.57) -- (0,0) -- (5.36,2.57) -- cycle    ;
\draw    (419.24,102.84) -- (421.74,111.02) ;
\draw    (475.84,103.25) -- (473.76,111.02) ;

\draw (151.6,80.54) node [anchor=north west][inner sep=0.75pt]  [font=\large]  {$p$};
\draw (234.07,97) node [anchor=north west][inner sep=0.75pt]  [font=\large]  {$q$};
\draw (92.63,249.49) node [anchor=north west][inner sep=0.75pt]  [font=\large]  {$0$};
\draw (257.08,246.99) node [anchor=north west][inner sep=0.75pt]  [font=\large]  {$l_{T}$};
\draw (169.72,223.89) node [anchor=north west][inner sep=0.75pt]  [font=\large]  {$a_{T}$};
\draw (348.93,103.19) node [anchor=north west][inner sep=0.75pt]  [font=\large]  {$F( p)$};
\draw (520.34,114.25) node [anchor=north west][inner sep=0.75pt]  [font=\large]  {$F( q)$};
\draw (363.47,249.49) node [anchor=north west][inner sep=0.75pt]  [font=\large]  {$0$};
\draw (524.07,244.92) node [anchor=north west][inner sep=0.75pt]  [font=\large]  {$l_{S}$};
\draw (445.47,223.63) node [anchor=north west][inner sep=0.75pt]  [font=\large]  {$a_{S}$};
\draw (308.52,160.75) node [anchor=north west][inner sep=0.75pt]  [font=\large]  {$F$};
\draw (409.66,86.03) node [anchor=north west][inner sep=0.75pt]  [font=\large]  {$x$};
\draw (473.92,86.14) node [anchor=north west][inner sep=0.75pt]  [font=\large]  {$y$};

\end{tikzpicture}

\caption{The gap $(p,q)$ jumps over $(x,y)$ with respect to $F$.}
\label{examplegapjumpthread}
\end{figure}
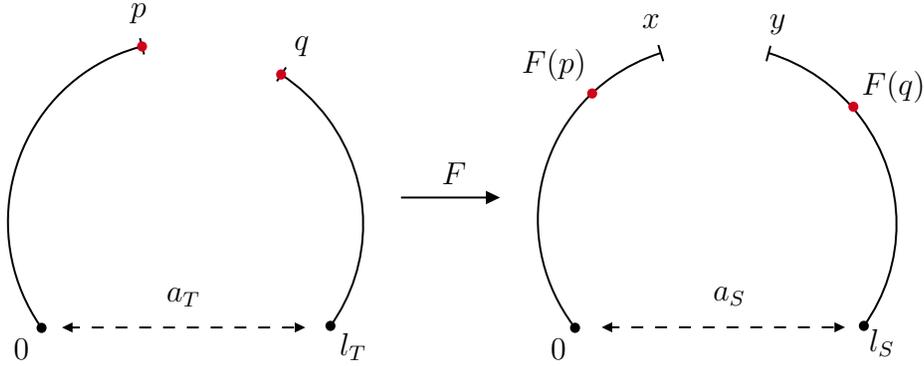

The second lemma we prove can also be easily deduced and is intuitively clear. It shows that if a small enough gap (in a sense that is made explicit in the statement of the lemma) $C^T$ in a thread $T$ jumps over several gaps in a thread $S$ simultaneously with respect to a Lipschitz function $F$, then the length of $C^T$ must be bigger than the length of the smallest subinterval of $[0,1]$ that contains all the gaps $C^T$ jumps over, divided by the Lipschitz constant of $F$. 
\begin{lemma}
\label{onlylonggapscanjump}
Let $K>1$, and let $T$ and $S$ be two threads of length $l_T$ and $l_S$ respectively. Denote by $a_S$ the width of $S$. Suppose that there is a non-decreasing Lipschitz function $F\colon T\rightarrow S$ with $\|F\|_\text{Lip}= K$ such that $F(0)=0$ and $F(l_T)=l_S$. Let $C^T$ be a gap in $T$ such that $\text{length}(C^T)<a_S/K$, and let $\big((x_j,y_j)\big)_{j=1}^k$ be a finite collection of different gaps in $S$. If $C^T$ jumps over $(x_j,y_j)$ with respect to $F$ for all $1\leq j\leq k$, then 

$$K\cdot\text{length} (C^T)\geq \max_{j\neq j'} |y_j-x_{j'}|.$$
\end{lemma}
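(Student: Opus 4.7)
The plan is to chain the jump hypothesis with the Lipschitz inequality for $F$, using the size restriction on $\text{length}(C^T)$ to identify the image distance in $S$ with the ordinary real-line distance.

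Write $C^T = (p, q)$, so $d_T(p,q) \le q - p = \text{length}(C^T) < a_S/K$. The $K$-Lipschitz property of $F$ then gives $d_S(F(p), F(q)) \le K(q-p) < a_S$. Inspecting the three expressions in the definition of $d_S$ on the thread $S$ of width $a_S$, the two ``wrap-around'' alternatives are each at least $a_S$, so the minimum must be $|F(p) - F(q)|$. Since $F$ is non-decreasing and $p < q$, this equals $F(q) - F(p)$, and hence
$$ F(q) - F(p) \;=\; d_S(F(p), F(q)) \;\le\; K\cdot\text{length}(C^T). $$

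Next I would invoke the jump hypothesis: for every $j$, $F(p) \le x_j$ and $F(q) \ge y_j$, so $F(p) \le \min_{j'} x_{j'}$ and $F(q) \ge \max_j y_j$. Combined with the previous inequality,
$$ K\cdot\text{length}(C^T) \;\ge\; F(q) - F(p) \;\ge\; \max_j y_j - \min_{j'} x_{j'}. $$
Since the gaps $(x_j, y_j)$ are pairwise distinct open intervals of $S$, after reordering we may assume $x_1 < y_1 \le x_2 < y_2 \le \cdots \le x_k < y_k$. A direct case split then shows that the maximum of $|y_j - x_{j'}|$ over pairs $j \ne j'$ equals $y_k - x_1$: for $j > j'$ one has $|y_j - x_{j'}| = y_j - x_{j'} \le y_k - x_1$ from $y_j \le y_k$ and $x_{j'} \ge x_1$, while for $j < j'$ one has $|y_j - x_{j'}| = x_{j'} - y_j \le y_k - x_1$ because $x_{j'} \le x_k \le y_k$ and $y_j \ge y_1 \ge x_1$. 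Thus $\max_{j \ne j'} |y_j - x_{j'}| = y_k - x_1 = \max_j y_j - \min_{j'} x_{j'}$, and chaining with the previous display yields the claim.

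The only subtle step is the first one: the hypothesis $\text{length}(C^T) < a_S/K$ is used precisely to force $d_S(F(p), F(q)) < a_S$, which is what allows us to identify $d_S(F(p), F(q))$ with $F(q) - F(p)$ and avoid the wrap-around contribution in the thread metric on $S$. Everything else is bookkeeping with ordered intervals.
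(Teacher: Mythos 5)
Your proof is correct and follows essentially the same route as the paper's: use $\mathrm{length}(C^T)<a_S/K$ together with the Lipschitz bound to rule out the wrap-around term and identify $d_S\big(F(p),F(q)\big)$ with $F(q)-F(p)$, then lower-bound $F(q)-F(p)$ by $\max_{j\neq j'}|y_j-x_{j'}|$ from the jump hypothesis. Your explicit reordering of the (necessarily pairwise disjoint) gaps just spells out the bookkeeping that the paper leaves implicit.
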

\begin{proof}
Put $C^T=(p,q)$ with $p,q\in T$. Since $C^T$ jumps over $(x_j,y_j)$ for all $1\leq j\leq k$, we have that $F(p)\leq x_j$ and $F(q)\geq y_j$. Hence, we have that 
$$F(q)-F(p)\geq \max_{j\neq j'} |y_j-x_{j'}|.$$

Since $d(p,q)<a_S/K$, when computing the distance between $F(p)$ and $F(q)$ in the thread $S$, we necessarily have that $d\big(F(p),F(q)\big)= F(q)-F(p)$. Therefore, applying that $F$ is $K$-Lipschitz we obtain:
$$\max_{j\neq j'} |y_j-x_{j'}|\leq d(F(q),F(p))\leq K\cdot \text{length}(C^T),$$
and the result is proven.
\end{proof}

We are going to define also a particular kind of intervals which will be useful in the proof of Theorem \ref{nolipschitzmap}. Let $(a,b)\subset [0,1]$ be a nontrivial open interval, and let $r>0$. We define the \textit{sweeping of $[a,b]$ by $r$} as the interval
$$ \mathcal{D}_r(a,b)=(b-r,a+r).$$

Notice that if $r\leq (b-a)/2$, then $\mathcal{D}_r(a,b)=\emptyset$. We can prove two simple properties about this concept.

\begin{proposition}
\label{propertiesdilations}
The following properties are verified:

\begin{enumerate}[label=(\arabic*), ref=(\arabic*)]
    \item Let $(a,b)\subset [0,1]$ be a nontrivial open interval, and let $r>0$. Then the Lebesgue measure of the sweeping $\mathcal{D}_r(a,b)$ is less than $2r$.
    \item Let $r>0$, and let $T$ and $S$ be threads of length $l_T$ and $l_S$ respectively. Let $F\colon T\rightarrow S$ be a non-decreasing $K$-Lipschitz map such that $F(0)=0$ and $F(l_T)=l_S$. Suppose that there is a gap $C^T$ in $T$ such that $\text{length}(C^T)<a_S/K$, and such that $C^T$ jumps over two gaps $C^S_1,C^S_2$ in $S$ with respect to $F$. Moreover, suppose that $C^S_2\nsubseteq \mathcal{D}_r(C^S_1)$. Then $K\cdot\text{length}(C^T)>r$. 
\end{enumerate}
\end{proposition}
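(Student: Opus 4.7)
For part (1), the argument is a one-line computation. By definition $\mathcal{D}_r(a,b)=(b-r, a+r)$, so the Lebesgue measure is $\max\{0,\,2r-(b-a)\}$. Since $(a,b)$ is nontrivial we have $b-a>0$, and hence this quantity is strictly less than $2r$ (whether the sweeping is empty or not).

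For part (2), the plan is to invoke Lemma \ref{onlylonggapscanjump} applied to the collection consisting of the two gaps $C^S_1=(x_1,y_1)$ and $C^S_2=(x_2,y_2)$. Since $\text{length}(C^T)<a_S/K$, the lemma yields
\[
K\cdot\text{length}(C^T)\;\geq\;\max\big(|y_1-x_2|,\,|y_2-x_1|\big).
\]
The key point will then be to convert the hypothesis $C^S_2\not\subseteq \mathcal{D}_r(C^S_1)=(y_1-r,x_1+r)$ into the statement that at least one of the two quantities $|y_1-x_2|$, $|y_2-x_1|$ exceeds $r$.

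To do this I would split into two cases depending on which of $C^S_1$, $C^S_2$ sits further to the left on the real line; this is well defined because the two gaps are distinct and must therefore be disjoint. If $x_1<x_2$ then $y_1\leq x_2$, which forces $y_1-r\leq x_2$ automatically, so the failure of $C^S_2\subseteq(y_1-r,x_1+r)$ must come from the right endpoint, giving $y_2>x_1+r$ and hence $|y_2-x_1|>r$. Symmetrically, if $x_2<x_1$ then $y_2\leq x_1$, the right-endpoint condition $y_2\leq x_1+r$ is automatic, so the failure must come from $y_1-r>x_2$, giving $|y_1-x_2|>r$. In either case the maximum on the right-hand side of the inequality above exceeds $r$, which yields the desired conclusion.

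The main (minor) obstacle is simply to notice that because the two gaps are distinct open intervals in a thread, one must lie strictly to the left of the other, which lets one of the two containment inequalities for $C^S_2\subseteq\mathcal{D}_r(C^S_1)$ be automatic; no subtlety beyond this bookkeeping is needed.
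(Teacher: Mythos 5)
Your proposal is correct and follows essentially the same route as the paper: part (1) is the same direct measure computation (which the paper merely calls ``easy to see''), and part (2) likewise deduces $\max\{|y_1-x_2|,|y_2-x_1|\}>r$ from the non-containment $C^S_2\nsubseteq\mathcal{D}_r(C^S_1)$ and then applies Lemma \ref{onlylonggapscanjump}. Your explicit case split on which gap lies to the left only makes more precise the paper's one-line ``either $y_1-r-x_2>0$ or $y_2-x_1-r>0$'' step; there is no substantive difference.
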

\begin{proof}
Statement $(1)$ is easy to see.
For statement $(2)$, put $C^S_1=(x_1,y_1),~ C^S_2=(x_2,y_2)$ with $x_1,x_2,y_1,y_2\in S$. Notice that if $C^S_2\nsubseteq \mathcal{D}_r(C^S_1)$, this means that either $y_1-r-x_2>0$, or $y_2-x_1-r>0$. In any case, we obtain that 

$$ \max \{|y_1-x_2|,|y_2-x_1|\}>r,$$
Now, since $C^T$ jumps over $C^S_1$ and $C^S_2$ simultaneously and $\text{length}(C^T)<a_S/K$, the result follows from Lemma \ref{onlylonggapscanjump}. 
\end{proof}

\begin{figure}

\tikzset{every picture/.style={line width=0.75pt}} 

\begin{tikzpicture}[x=0.75pt,y=0.75pt,yscale=-1,xscale=1]

\draw  [draw opacity=0] (361.63,59.95) .. controls (394.32,65.78) and (424.81,84.06) .. (445.38,113.43) .. controls (477.16,158.81) and (475.95,217.42) .. (446.76,260.76) -- (338.89,187.99) -- cycle ; \draw   (361.63,59.95) .. controls (394.32,65.78) and (424.81,84.06) .. (445.38,113.43) .. controls (477.16,158.81) and (475.95,217.42) .. (446.76,260.76) ;
\draw  [color={rgb, 255:red, 74; green, 76; blue, 226 }  ,draw opacity=1 ] (418.33,88.02) -- (426.53,88.02)(422.43,84.03) -- (422.43,92) ;
\draw  [color={rgb, 255:red, 74; green, 76; blue, 226 }  ,draw opacity=1 ] (255.67,84.68) -- (263.87,84.68)(259.77,80.7) -- (259.77,88.67) ;
\draw  [draw opacity=0] (276.11,74.03) .. controls (290.28,66.2) and (305.43,61.29) .. (320.78,59.15) -- (338.89,187.99) -- cycle ; \draw   (276.11,74.03) .. controls (290.28,66.2) and (305.43,61.29) .. (320.78,59.15) ;
\draw    (361.83,56.33) -- (360.07,64.1) ;
\draw    (321.17,55) -- (322.07,63.43) ;
\draw [color={rgb, 255:red, 208; green, 2; blue, 27 }  ,draw opacity=1 ]   (273.83,71) -- (278.5,78.33) ;
\draw   (227.96,260.41) .. controls (227.96,258.77) and (229.29,257.44) .. (230.92,257.44) .. controls (232.56,257.44) and (233.88,258.77) .. (233.88,260.41) .. controls (233.88,262.04) and (232.56,263.37) .. (230.92,263.37) .. controls (229.29,263.37) and (227.96,262.04) .. (227.96,260.41) -- cycle ;
\draw   (443.8,260.76) .. controls (443.8,259.12) and (445.13,257.8) .. (446.76,257.8) .. controls (448.4,257.8) and (449.73,259.12) .. (449.73,260.76) .. controls (449.73,262.39) and (448.4,263.72) .. (446.76,263.72) .. controls (445.13,263.72) and (443.8,262.39) .. (443.8,260.76) -- cycle ;
\draw  [dash pattern={on 4.5pt off 4.5pt}]  (253,260) -- (427,260) ;
\draw [shift={(430,260)}, rotate = 180] [fill={rgb, 255:red, 0; green, 0; blue, 0 }  ][line width=0.08]  [draw opacity=0] (5.36,-2.57) -- (0,0) -- (5.36,2.57) -- cycle    ;
\draw [shift={(250,260)}, rotate = 0] [fill={rgb, 255:red, 0; green, 0; blue, 0 }  ][line width=0.08]  [draw opacity=0] (5.36,-2.57) -- (0,0) -- (5.36,2.57) -- cycle    ;
\draw  [draw opacity=0] (271.8,91.65) .. controls (296.07,74.74) and (324.24,67.76) .. (351.3,69.9) -- (338.89,187.99) -- cycle ; \draw  [color={rgb, 255:red, 74; green, 76; blue, 226 }  ,draw opacity=1 ] (271.8,91.65) .. controls (296.07,74.74) and (324.24,67.76) .. (351.3,69.9) ;
\draw [color={rgb, 255:red, 74; green, 76; blue, 226 }  ,draw opacity=1 ]   (360,70) ;
\draw [shift={(360,70)}, rotate = 180] [fill={rgb, 255:red, 74; green, 76; blue, 226 }  ,fill opacity=1 ][line width=0.08]  [draw opacity=0] (8.4,-2.1) -- (0,0) -- (8.4,2.1) -- cycle    ;
\draw [color={rgb, 255:red, 74; green, 76; blue, 226 }  ,draw opacity=1 ]   (269.4,93.25) -- (268.73,93.68) -- (268.73,93.68) ;
\draw [shift={(269.06,93.46)}, rotate = 327.4] [fill={rgb, 255:red, 74; green, 76; blue, 226 }  ,fill opacity=1 ][line width=0.08]  [draw opacity=0] (8.4,-2.1) -- (0,0) -- (8.4,2.1) -- cycle    ;
\draw [color={rgb, 255:red, 208; green, 2; blue, 27 }  ,draw opacity=1 ]   (222.5,121) -- (229.83,125.67) ;
\draw  [draw opacity=0] (230.92,260.41) .. controls (202.45,217.99) and (202.21,164.63) .. (226.21,122.94) -- (338.89,187.99) -- cycle ; \draw   (230.92,260.41) .. controls (202.45,217.99) and (202.21,164.63) .. (226.21,122.94) ;
\draw  [draw opacity=0][dash pattern={on 4.5pt off 4.5pt}] (259.89,84.62) .. controls (261.32,83.53) and (262.78,82.46) .. (264.27,81.42) .. controls (313.55,46.91) and (378.48,51.38) .. (422.48,88.38) -- (338.89,187.99) -- cycle ; \draw  [color={rgb, 255:red, 74; green, 76; blue, 226 }  ,draw opacity=1 ][dash pattern={on 4.5pt off 4.5pt}] (259.89,84.62) .. controls (261.32,83.53) and (262.78,82.46) .. (264.27,81.42) .. controls (313.55,46.91) and (378.48,51.38) .. (422.48,88.38) ;
\draw  [fill={rgb, 255:red, 65; green, 117; blue, 5 }  ,fill opacity=1 ] (217.59,134.62) .. controls (217.59,132.99) and (218.92,131.66) .. (220.55,131.66) .. controls (222.19,131.66) and (223.51,132.99) .. (223.51,134.62) .. controls (223.51,136.26) and (222.19,137.59) .. (220.55,137.59) .. controls (218.92,137.59) and (217.59,136.26) .. (217.59,134.62) -- cycle ;
\draw  [fill={rgb, 255:red, 65; green, 117; blue, 5 }  ,fill opacity=1 ] (378.92,65.29) .. controls (378.92,63.66) and (380.25,62.33) .. (381.89,62.33) .. controls (383.52,62.33) and (384.85,63.66) .. (384.85,65.29) .. controls (384.85,66.93) and (383.52,68.25) .. (381.89,68.25) .. controls (380.25,68.25) and (378.92,66.93) .. (378.92,65.29) -- cycle ;

\draw (308,35.4) node [anchor=north west][inner sep=0.75pt]    {$x_{1}$};
\draw (361.33,34.07) node [anchor=north west][inner sep=0.75pt]    {$y_{1}$};
\draw (216.39,262.78) node [anchor=north west][inner sep=0.75pt]    {$0$};
\draw (456.39,260.38) node [anchor=north west][inner sep=0.75pt]    {$l$};
\draw (338,233.4) node [anchor=north west][inner sep=0.75pt]    {$a$};
\draw (312.67,76.5) node [anchor=north west][inner sep=0.75pt]  [color={rgb, 255:red, 74; green, 76; blue, 226 }  ,opacity=1 ]  {$r$};
\draw (204,103.17) node [anchor=north west][inner sep=0.75pt]  [color={rgb, 255:red, 208; green, 2; blue, 27 }  ,opacity=1 ]  {$x_{2}$};
\draw (264.67,49.83) node [anchor=north west][inner sep=0.75pt]  [color={rgb, 255:red, 208; green, 2; blue, 27 }  ,opacity=1 ]  {$y_{2}$};
\draw (181.89,124.02) node [anchor=north west][inner sep=0.75pt]  [color={rgb, 255:red, 65; green, 117; blue, 5 }  ,opacity=1 ]  {$F( p)$};
\draw (382.55,43.36) node [anchor=north west][inner sep=0.75pt]  [color={rgb, 255:red, 65; green, 117; blue, 5 }  ,opacity=1 ]  {$F( q)$};
\draw (222.67,70.5) node [anchor=north west][inner sep=0.75pt]  [font=\footnotesize,color={rgb, 255:red, 74; green, 76; blue, 226 }  ,opacity=1 ]  {$y_{1} -r$};
\draw (428.67,74.5) node [anchor=north west][inner sep=0.75pt]  [font=\footnotesize,color={rgb, 255:red, 74; green, 76; blue, 226 }  ,opacity=1 ]  {$x_{1} +r$};

\end{tikzpicture}

\caption{The gap $C^T=(p,q)$ jumps over $(x_1,y_1)$ and $(x_2,y_2)$, and $(x_2,y_2)\nsubseteq\mathcal{D}_r\big((x_1,y_1)\big)=(y_1-r,x_1+r)$. Hence, $K\cdot \text{length}(C^T)\geq r$.}
\label{drawingpropsweeping}
\end{figure}
In figure \ref{drawingpropsweeping} we have a representation of the situation in $(2)$ of the previous result. 

Finally we are able to prove the main result of the section:

\begin{theorem}
\label{nolipschitzmap}
Let $K_0\geq 1$, let $\{T_n\}_{n\in\mathbb{N}}$ be a countable family of threads of length $l_n$ and width $a_n$ respectively for each $n\in\mathbb{N}$, and let $\varepsilon>0$ be such that for every $n\in\mathbb{N}$:
\begin{itemize}
    \item The Lebesgue measure of $T_n$ is bigger than $\varepsilon$.
    \item If $I$ is an open interval such that $I\cap T_n$ is nonempty, then there exist infinitely many gaps of $T_n$ contained in $I$.
\end{itemize}

Then, there exists a decreasing sequence $\gamma^*=(\gamma_k^*)_{k\in\mathbb{N}}$ of positive real numbers with the following property:

Let $1\leq K'\leq K$, let $S$ be a thread of length $l_S$, and let $\{C^S_k\}_{k\in\mathbb{N}}$ be the sequence of gaps of $S$ ordered decreasingly according to their length. If $\text{length}(C^S_{k})\leq\gamma^*_k$ for all $k\in\mathbb{N}$, then for every $n\in\mathbb{N}$ such that $\text{length}(C^S_1)<a_n/K'$ there does not exist any $K'$-Lipschitz function $S\rightarrow T_n$ such that $F(0)=0$ and $F(l_S)=l_n$.
\end{theorem}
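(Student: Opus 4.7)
My plan is to argue by contradiction. Suppose there exist $n\in\mathbb{N}$, $K'\in[1,K_0]$, a thread $S$ with $\text{length}(C^S_k)\leq\gamma^*_k$ for all $k$, and a $K'$-Lipschitz $F\colon S\to T_n$ with $F(0)=0$, $F(l_S)=l_n$; I will choose $\gamma^*$ so rapidly decreasing that this forces $\lambda(T_n)<\varepsilon$, contradicting the Lebesgue hypothesis on $T_n$. Proposition \ref{wlogFisnondecreasing} lets me assume $F$ is non-decreasing. Since $\text{length}(C^S_1)<a_n/K'$ and the gaps of $S$ are ordered by decreasing length, no gap of $S$ has length $\geq a_n/K'$, so Lemma \ref{lipconstinthreadswithnogaps} yields the sharper linear bound $|F(q)-F(p)|\leq K'(q-p)$ for all $p\leq q$ in $S$. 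Lemma \ref{gapjumpsovergap} then guarantees that every gap of $T_n$ is jumped by some gap of $S$ with respect to $F$.

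For each $j$ whose gap $C^S_j$ jumps at least one gap of $T_n$, fix a reference jumped gap $J^{\ast}_j$. The linear inequality forces $\text{length}(J^{\ast}_j)\leq K'\text{length}(C^S_j)\leq K'\gamma^*_j$, and Proposition \ref{propertiesdilations}(2) places every other gap of $T_n$ jumped by $C^S_j$ inside the sweeping $\mathcal{D}_{K'\text{length}(C^S_j)}(J^{\ast}_j)$. Setting $W_j := J^{\ast}_j\cup\mathcal{D}_{K'\gamma^*_j}(J^{\ast}_j)$ produces an interval (or union of two) of Lebesgue measure at most $2K_0\gamma^*_j$ that contains every gap of $T_n$ jumped by $C^S_j$. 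Hence $\bigcup_j W_j$ covers every gap of $T_n$. The dense-gaps hypothesis (every open interval meeting $T_n$ contains infinitely many gaps, so every $x\in T_n$ is an accumulation point of gaps of $T_n$) then gives $T_n\subseteq\overline{\bigcup_j W_j}$.

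I then pick $\gamma^*$ strictly decreasing with $\sum_k 2K_0\gamma^*_k<\varepsilon$, for instance a geometric sequence of the form $\gamma^*_k=\varepsilon/(2^{k+3}K_0)$, so that $\lambda\big(\bigcup_j W_j\big)\leq\sum_j|W_j|<\varepsilon$. The clean conclusion I want is $\lambda(T_n)\leq\lambda\big(\bigcup_j W_j\big)<\varepsilon$, the desired contradiction. The main obstacle is precisely the passage from $T_n\subseteq\overline{\bigcup_j W_j}$ to a measure bound by $\sum|W_j|$, since in general a countable union of intervals with summable total length can have closure of strictly larger measure. I expect to close this gap either by slightly enlarging each $W_j$ to some $W_j^+$ (exploiting that $W_j$ is anchored at a gap $J^{\ast}_j$ of $T_n$, so any $x\in T_n$ close to $W_j$ must be close to an endpoint of $J^{\ast}_j$) while keeping $\sum|W_j^+|<\varepsilon$ via a faster geometric decay of $\gamma^*$, or by an inductive construction of $\gamma^*_k$ that at each step invokes the dense-gaps hypothesis of every $T_n$ to exhibit a gap of $T_n$ not covered by the previously accounted windows and thereby forces a lower bound on some later $\text{length}(C^S_i)$ violating $\gamma^*_i$. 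This measure-theoretic closing of the argument is the technical heart of the proof.
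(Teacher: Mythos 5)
There is a genuine gap, and it sits exactly where you flag it: the passage from $T_n\subseteq\overline{\bigcup_j W_j}$ to $\lambda(T_n)\leq\sum_j|W_j|$. In fact the containment you derive is vacuous: since every gap of $T_n$ lies in some $W_j$, the union $\bigcup_j W_j$ contains the union $U$ of \emph{all} gaps of $T_n$, and $T_n$ is nowhere dense (hypothesis two), so $T_n\subseteq\overline{U}\subseteq\overline{\bigcup_j W_j}$ holds for \emph{every} choice of $\gamma^*$ and carries no information. The threads $T_n$ are precisely fat-Cantor-type sets: $U$ itself is an open set of measure $l_n-\lambda(T_n)<l_n-\varepsilon$ whose closure contains $T_n$ with $\lambda(T_n)>\varepsilon$, so no measure bound on a family of open windows covering the gaps can ever bound $\lambda(T_n)$. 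Your first proposed repair (enlarging each $W_j$ to $W_j^+$) cannot work for the same reason: a point of $T_n$ is approached by arbitrarily small gaps of arbitrarily late index, so it need not lie within any summable enlargement of the windows, and any such enlargement keeping $\sum|W_j^+|<\varepsilon$ still cannot cover a set of measure $>\varepsilon$. A variant that avoids closures --- bounding the \emph{total gap length} $l_n-\lambda(T_n)$ of $T_n$ by $\sum_j K'\gamma^*_j$ --- also fails, because the hypotheses give only an upper bound $l_n-\lambda(T_n)<l_n-\varepsilon$ on that quantity, not a uniform positive lower bound over $n$, so one cannot choose $\gamma^*$ in advance to undercut it.

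Your second proposed repair is a one-sentence gesture at what the paper actually does, and that argument is the entire content of the proof rather than a technical patch. The paper constructs $\gamma^*$ by induction: having fixed $\gamma^*_1,\dots,\gamma^*_k$, it considers every ordering $\sigma$ of $\{1,\dots,k\}$ (i.e., every possible assignment of which of the first $k$ gaps of $S$ jumps which gap of $T_{k+1}$, in which order), and for each $\sigma$ builds a finite chain of gaps of $T_{k+1}$, each chosen outside the union of the sweepings of the previous ones; the measure hypothesis $\lambda(T_{k+1})>\varepsilon$ together with $\gamma^*_i<2^{-(i+1)}K^{-1}\varepsilon$ guarantees the complement of the accumulated sweepings still meets $T_{k+1}$, and the dense-gaps hypothesis supplies a gap there. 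Then $\gamma^*_{k+1}$ is taken below the minimum over all $\sigma$ of the lengths of the terminal gaps of these chains. The contradiction in the verification is a pigeonhole: the terminal gap of the chain must be jumped by one of the first $k$ gaps of $S$ (the later ones are too short by the choice of $\gamma^*_{k+1}$), but that gap of $S$ already jumped an earlier link of the chain, and Proposition \ref{propertiesdilations}(2) then forces its length above $\gamma^*_{j}$. None of this combinatorial bookkeeping --- which is needed precisely because the assignment $j\mapsto J^*_j$ is not known when $\gamma^*_j$ is fixed --- appears in your proposal, so the proof is not complete as written.
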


\begin{proof}

For each $n\in\mathbb{N}$, let $\{G^n_k\}_{k\in\mathbb{N}}$ be the sequence of gaps of $T_n$ ordered decreasingly according to their length, and put $\alpha^n_k=\text{length}(G^n_k)$. Then $\alpha^n=(\alpha^n_k)_{k\in\mathbb{N}}$ is the decreasing sequence of lengths of the gaps in the thread $T_n$ for each $n\in\mathbb{N}$.

We are going to construct inductively by a diagonal method the sequence $\gamma^*=(\gamma_k^*)_{k\in\mathbb{N}}$ with the following properties: 

\begin{enumerate}[label=(\arabic*), ref=(\arabic*)]
    \item $\gamma^*_k< 2^{-(k+1)}K^{-1}\varepsilon $, for all $k\in\mathbb{N}$.
    \item Let $1\leq K'\leq K$, let $S$ be a thread, and let $\{C^S_i\}_{i\in\mathbb{N}}$ be the sequence of gaps of $S$ ordered decreasingly according to their length. If $\text{length}(C^S_{i})\leq \gamma^*_i$ for all $i\leq k$, then for every $i\in\mathbb{N}$ with $i\leq k$ such that $\text{length}(C^S_1)< a_i/K$ there is no Lipschitz map $F\colon S\rightarrow T_i$ with $\|F\|_\text{Lip}\leq K'$ such that $F(0)=0$ and $F(l_S)=l_i$. 
\end{enumerate}

For $k=1$, define $\gamma^*_1$ such that $0<\gamma_1^*<\min\{2^{-2}K^{-1}\varepsilon,K^{-1}\alpha_1^1\}$. Property $(1)$ above is verified. Let $1\leq K'\leq K$, let $S$ be a thread, let $\{C^S_i\}_{i\in\mathbb{N}}$ be the sequence of its gaps in decreasing length order, and suppose that $\text{length}(C^S_{1})\leq \gamma^*_1$. Suppose by contradiction that $\text{length}(C^S_1)<a_1/K'$, and that there exists a Lipschitz map $F\colon S\rightarrow T_1$ with $\|F\|_\text{Lip}\leq K'$ and $F(0)=0$ and $F(l_S)=l_1$. We assume $F$ to be non-decreasing by Proposition \ref{wlogFisnondecreasing}.

The thread $T_1$ has the gap $G^1_1$ of length $\alpha^1_1$. By Lemma \ref{gapjumpsovergap}, there exists $i\in\mathbb{N}$ such that the gap $C^S_i$ in $S$ jumps over $G^1_1$. Then, by Lemma \ref{onlylonggapscanjump}, since $\text{length}(C^S_i)\leq \text{length}(C^S_1)<a_1/K'$, we have that $K'\cdot \text{length}(C^S_i)>\alpha^1_1$. However, we know that 
$$K'\cdot \text{length}(C^S_i)\leq K\cdot \text{length}(C^S_1)\leq K\gamma^*_1<\alpha_1^1,$$
a contradiction. The first step of the induction is done.

Suppose we have selected $\{\gamma^*_i\}_{i=1}^k$ verifying the desired properties for $k\in \mathbb{N}$. Before continuing with the proof, let us informally give some intuition of the technical argument that follows. We want to define the next element in the sequence (that is, $\gamma^*_{k+1}$) small enough so that any thread $S$ with gaps smaller than the first $k+1$ elements of $\gamma^*$, and smaller than $a_{k+1}$, cannot be mapped with a $K$-Lipschitz function that preserves the extremes into the thread $T_{k+1}$. However, since the first $k$ elements of $\gamma^*$ are already set and do not depend on the next thread $T_{k+1}$, the first $k$ gaps in $S$ could jump over many gaps in $T_{k+1}$, and in many different ways. Nevertheless, as we are going to see, the fact that $T_{k+1}$ is of large enough measure and contains infinitely many gaps in each intersecting open interval, and the way in which we chose the first $k$ elements in $\gamma^*$, ensures that there will always be infinitely many gaps in $T_{k+1}$ that cannot be jumped over in any way with the first $k$ gaps of $S$ with any suitable $K$-Lipschitz function. Hence, we will be able to define $\gamma^{k+1}$ small enough so that the biggest of these ``unjumped" gaps in $T_{k+1}$ cannot be jumped over either by the remaining gaps of $S$. We just need to account for all the possibilities in which the first $k$ gaps of $S$ might behave under a $K$-Lipschitz function. 

Let $\sigma=\{j_i\}_{i=1}^k$ be an ordering of the sequence $\{1,\dots,k\}$. For convenience of the notation, put $j_0=0$, and $n_{j_0}=1$, and define
$$D_{(j_0,j_1)}=\mathcal{D}_{K\gamma^*_{j_1}}(G^{k+1}_{n_{j_0}}). $$
$D_{(j_0,j_1)}$ is the sweeping of $G^{k+1}_{n_{j_0}}$, the biggest gap of $T_{k+1}$, by $K\gamma^*_{j_1}$. The measure of $D_{(j_0,j_1)}$ is at most $2K\gamma_{j_1}^* <2^{-j_1}\varepsilon< \varepsilon$. Hence, since $T_{k+1}$ has measure greater than $\varepsilon$, the set $T_{k+1}\setminus D_{(j_0,j_1)}=T_{k+1}\cap ([0,l_{k+1}]\setminus D_{(j_0,j_1)})$ is nonempty. Since $[0,l_{k+1}]\setminus D_{(j_0,j_1)}$ is a finite union of open intervals, by hypothesis there must exist infinitely many gaps in $T_{k+1}\setminus D_{(j_0,j_1)}$. We can then consider
$$n_{(j_0,j_1)}=\min\{n> n_{j_0}\colon ~G^{k+1}_n\nsubseteq D_{(j_0,j_1)}\}. $$

Intuitively, $G^{k+1}_{n_{(j_0,j_1)}}$ is the biggest gap of $T_{k+1}$ smaller than $G^{k+1}_{n_{j_0}}$ which is not contained in the sweeping $D_{(j_0,j_1)}$. We continue the process defining
$$ D_{(j_0,j_1,j_2)}=D_{(j_0,j_1)}\cup \mathcal{D}_{K\gamma^*_{j_2}}(G^{k+1}_{n_{(j_0,j_1)}}).$$

The measure of $D_{(j_0,j_1,j_2)}$ is at most $(2^{-j_1}+2^{-j_2})\varepsilon<\varepsilon$, and its complement in $[0,l_{k+1}]$ is still a finite union of open intervals, so by the properties of $T_{k+1}$ we can make the same argument as before to find
$$n_{(j_0,j_1,j_2)}=\min\{n> n_{(j_0,j_1)}\colon ~G^{k+1}_n\nsubseteq D_{(j_0,j_1,j_2)}\}, $$
which will be the biggest gap of $T_{k+1}$ smaller than $G^{k+1}_{n_{(j_0,j_1)}}$ not contained in $D_{(j_0,j_1,j_2)}$. Hence, it is not contained in neither the sweeping $\mathcal{D}_{K\gamma^*_{j_1}}(G^{k+1}_{n_{j_0}})$ nor $\mathcal{D}_{K\gamma^*_{j_2}}(G^{k+1}_{n_{(j_0,j_1)}})$. 

Repeating this process $k$ times, we can define $n_{\sigma}=n_{(j_0,\dots,j_{k-1})}\in \mathbb{N}$ such that $G_{n_{\sigma}}^{k+1}$ is the biggest gap of $T_{k+1}$ not contained in $\mathcal{D}_{K\gamma^*_{j_k}}(G^{k+1}_{n_{(j_0,\dots,j_{i-1})}})$ for any $1\leq i\leq k$, and smaller than $G^{k+1}_{n_{(j_0,\dots,j_{i-1})}}$ for every $1\leq i\leq k-1$. Notice that this last condition can be written as:
\begin{equation}
\label{gammasigmaisthesmallest}
\alpha^{k+1}_{n_\sigma}< \min_{1\leq i\leq k} \alpha^{k+1}_{n_{(j_0,\dots,j_{i-1})}}.
\end{equation}

Now, let $\Omega_k=\{\sigma=\{j_i\}_{i=1}^k\colon \sigma\text{ is and ordering of }\{1,\dots,k\}\}$. Clearly $\Omega_k$ is a finite set, so we can define $n_{\Omega_k}=\max\{n_\sigma\colon \sigma\in \Omega_k\}$. The corresponding gap $G^{k+1}_{n_{\Omega_k}}$ is smaller than or equal to each $G_{n_{\sigma}}^{k+1}$. Equivalently, we have that 
\begin{equation}
\label{gammaomegaisthesmallest}
\alpha^{k+1}_{n_{\Omega_k}}\leq \min_{\sigma\in \Omega_k}\alpha^{k+1}_{n_\sigma}.
\end{equation}

Finally, choose $\gamma^*_{k+1}$ so that $0<\gamma^{*}_{k+1}<\min\{2^{-{(k+2)}}K^{-1}\varepsilon,K^{-1}\alpha_{n_{\Omega_k}}^{k+1}\}$. Again, property $(1)$ of the induction is verified. Let $1\leq K'\leq K$, let $S$ be a thread such that its gaps sequence of gaps $\{C^S_i\}_{i\in\mathbb{N}}$ ordered decreasingly in length, verify that $\text{length}(C^S_{i})\leq \gamma^*_i$ for all $i\leq k+1$. Applying inductive hypothesis, since the result is assumed to be true for $k$, we only need to prove that if $\text{length}(C^S_1)<a_{k+1}/K'$, there is no Lipschitz map $F\colon S\rightarrow T_{k+1}$ with $\|F\|_\text{Lip}\leq K'$ and $F(0)=0$ and $F(l_S)=l_{k+1}$. Suppose by contradiction that such a map $F$ exists. Again, we may assume $F$ to be non-decreasing.

Put again $j_0=0$ and $n_{j_0}=1$, and consider the gap $G_{n_{j_0}}^{k+1}$ in $T_{k+1}$ (that is, the biggest gap of $T_{k+1}$). By Lemma \ref{gapjumpsovergap}, there exists $j_1\in\mathbb{N}$ such that the gap $C^S_{j_1}$ in $S$ jumps over $G_{n_{j_0}}^{k+1}$, which has length $\alpha^{k+1}_{n_{j_0}}$. Since $K'\cdot\text{length}(C^S_{k+1})<\alpha^{k+1}_{n_{\Omega_k}}<\alpha^{k+1}_{n_{j_0}}$, by Lemma \ref{onlylonggapscanjump} we have that $j_1\leq k$. Therefore, we can define $n_{(j_0,j_1)}=\min\{n> n_{j_0}\colon ~G^{k+1}_n\nsubseteq D_{(j_0,j_1)}\}$ as before.  

Consider now the gap $G^{k+1}_{n_{(j_0,j_1)}}$ in $T_{k+1}$, and take $j_2$ such that $C^S_{j_2}$ jumps over $G^{k+1}_{n_{(j_0,j_1)}}$. Again, since $K'\cdot\text{length}(C^S_{k+1})<\alpha^{k+1}_{n_{\Omega_k}}<\alpha^{k+1}_{n_{(j_0,j_1)}}$, we obtain that $j_2\leq k$. Moreover, $j_2$ is different from $j_1$. Indeed, if $j_2=j_1$, then $C^S_{j_1}$ jumps over both $G_{n_{j_0}}^{k+1}$ and $G^{k+1}_{n_{(j_0,j_1)}}$. By the choice of $n_{(j_0,j_1)}$, $G^{k+1}_{n_{(j_0,j_1)}}\nsubseteq \mathcal{D}_{K\gamma^*_{j_1}}(G^{k+1}_{n_{j_0}})$, so by Proposition \ref{propertiesdilations}, we have that $K'\cdot\text{length}(C^S_{j_1})>K\gamma^*_{j_1}$, a contradiction.

We can repeat this process $k$-times until we obtain a sequence $\sigma=\{j_i\}_{i=1}^k$ of $k$ different numbers in $\{1,\dots,k\}$ (so $\sigma\in {\Omega_k}$) such that $C_{j_{i}}^S$ jumps over the gap $G^{k+1}_{n_{(j_0,\dots,j_{i-1})}}$ in $T_{k+1}$ for all $1\leq i\leq k$. To finish the proof, consider the gap $G^{k+1}_{n_\sigma}$ in $T_{k+1}$, where $n_\sigma$ is defined as above for $\sigma\in {\Omega_k}$, and take $\tilde{i}$ such that $C^S_{\tilde{i}}$ jumps over $G^{k+1}_{n_\sigma}$. Reasoning as before, by the choice of $\gamma^*_{k+1}$ and using equation \eqref{gammaomegaisthesmallest} we have that $\tilde{i}\leq k$. Then $\tilde{i}=j_{i_0}$ for some $1\leq i_0\leq k$. We have chosen $i_0$ such that $C^S_{j_{i_0}}$ jumps over $G^{k+1}_{n_{(j_0,\dots,j_{i_0-1})}}$ as well. Moreover, $G^{k+1}_{n_\sigma}$ is not contained in $\mathcal{D}_{K\gamma^{*}_{j_{i_0}}}\big( G^{k+1}_{n_{(j_0,\dots,j_{i_0-1})}}\big)$. Therefore, by Proposition \ref{propertiesdilations}, we have that $K'\cdot\text{length}(C^S_{j_{k_0}})>K\gamma^*_{j_{k_0}}$, a contradiction.

The induction is finished and the result follows. 

\end{proof}

Compact subsets of the real line with positive measure that contain no nontrivial intervals have been considered many times before: the well known \emph{fat Cantor sets} are examples of this kind of objects. For our purposes, we need to find threads with these properties and whose gaps are smaller than any given decreasing sequence of positive real numbers. For completeness, we include the construction of such threads in the following subsection.

Let us first finish this subsection by making a simple remark:
\begin{remark}
\label{threadandseparatedsets}
Let $M$ be a complete metric space, let $K\geq 1$, and let $S_1,S_2$ be two closed subsets of $M$ such that $d(S_1,S_2)=\varepsilon>0$. Then, if $T$ is a thread such that its sequence of gaps $\{C^T_k\}_{k\in\mathbb{N}}$ verifies $\text{length}(C^T_k)<\varepsilon/K$ for all $k\in\mathbb{N}$, then there is no $K$-Lipschitz map $F\colon T\rightarrow S_1\cup S_2$ such that $F(0)\in S_1$ and $F(p)\in S_2$ for some $p\in T$.
\end{remark}
\begin{proof}
Consider the point 
\begin{align*}
    P&=\min\{x\in T\colon F(x)\in S_2\}.
\end{align*}
The point $P$ is not $0$ since $F(0)\in S_1$, and for all $x\in [0,P)_T$ we have that $F(x)\in S_1$. However, since every gap in $T$ is smaller than $\varepsilon/K$, there exists $x\in [0,P)_T$ such that $d(x,P)<\varepsilon/K$. Then $d(F(x),F(P))<\varepsilon$, which contradicts the fact that $d(S_1,S_2)=\varepsilon$.
\end{proof}

\subsection{Construction of threads with infinitely many gaps}

Our objective now is to define a collection of closed subsets of the real segment $[0,1]$ containing $0$ and $1$ which, when given a thread metric, will verify the hypothesis of Theorem \ref{nolipschitzmap} for $\varepsilon=1/2$. 
For the rest of this section, fix $\mathbb{Q}\cap (0,1)=(q_n)_{n=1}^\infty$, an ordering of the rational numbers in the interval $(0,1)$.
Consider a decreasing sequence of real numbers $\gamma=(\gamma_i)_{i=1}^\infty$ such that 

\begin{enumerate}[label=(\roman*), ref=(\roman*)]
    \item $\gamma_i>0$ for all $i\in \mathbb{N}$,
    \item $\gamma_i<2^{-(i+1)}$ for all $i\in\mathbb{N}$.
    \item $q_1+\gamma_1<1$.
\end{enumerate}

Put $\Delta=\{\gamma=(\gamma_i)_i\colon \gamma\text{ is decreasing and verifies (i), (ii) and (iii)}\}$ for the rest of the article.

For any given $\gamma\in \Delta$, we define $\{G_i\}_{i\in\mathbb{N}}$ inductively as the following open subintervals of $(0,1)$:
$$    G^\gamma_1=(q_1,q_1+\gamma_1),$$
and for $i\geq 2$:
$$G^\gamma_{i}=(q_{n_i},q_{n_i}+\gamma_i),\text{ where }n_i=\min\bigg\{n\in\mathbb{N}\colon (q_n,q_n+\gamma_i)\subset (0,1)\setminus\bigg(\bigcup_{j<i} G^\gamma_j\bigg) \bigg\}. $$
Note that property $(ii)$ of $\gamma$ guarantees that $n_i$ exists for all $i\in \mathbb{N}$. 

Using this, we define the closed subset $T_\gamma\subset [0,1]$ as

$$T_\gamma=[0,1]\setminus \bigg(\bigcup_{i=1}^\infty G^\gamma_i\bigg) $$
for any $\gamma\in\Delta$. The definition of $\{G^\gamma_i\}_{i\in\mathbb{N}}$ and $T_\gamma$ for every $\gamma\in \Delta$ is fixed for the rest of the article.

\begin{proposition}
\label{propertieslgamma}
Let $\gamma=(\gamma_i)_{i=1}^\infty \in \Delta$, and let $\{G^\gamma_i\}_{i\in\mathbb{N}}$ and $T_\gamma$ be defined as above. Then $T_\gamma$ is a compact subset of $[0,1]$ that verifies:

\begin{enumerate}[label=(\arabic*), ref=(\arabic*)]
    \item The Lebesgue measure of $T_\gamma$ is greater than or equal to $1/2$.
    \item The points $0$ and $1$ belong to $T_{\gamma}$. 
    \item The sequence of gaps of $T_\gamma$ is the sequence $\{G^\gamma_i\}_{i\in\mathbb{N}}$, and $\text{length}(G^\gamma_i)=\gamma_i$ for all $i\in\mathbb{N}$. 
    \item The set $T_\gamma$ does not contain any nontrivial interval. Consequently, if $(x,x+\delta)\cap T_\gamma\neq \emptyset$ for some $x\in [0,1]$ and $\delta>0$, then $(x,x+\delta)$ contains infinitely many of gaps of $T_\gamma$.
\end{enumerate}
\end{proposition}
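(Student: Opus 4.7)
The plan is to verify the four assertions in order, starting with the computational ones and building up to the technical heart of (4).

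Compactness is free, since $T_\gamma=[0,1]\setminus\bigcup_i G_i^\gamma$ is the complement of an open set in $[0,1]$. For (1), summing condition (ii) gives $\sum_i\gamma_i<\sum_i 2^{-(i+1)}=1/2$, so $\lambda(T_\gamma)\geq 1/2$. The same estimate underwrites the inductive step of the construction: at step $i$, the residual set $[0,1]\setminus\bigcup_{j<i}G_j^\gamma$ decomposes into at most $i$ closed subintervals of total length exceeding $1/2$, so a pigeonhole argument produces one of length greater than $1/(2i)>2^{-(i+1)}>\gamma_i$, inside which density of $\mathbb{Q}$ supplies a valid left endpoint $q_n$. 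For (2), condition (iii) places $G_1^\gamma\subset(0,1)$, and induction keeps every $G_j^\gamma$ strictly inside $(0,1)$, so $\{0,1\}\subset T_\gamma$.

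For (3) I would establish three claims: pairwise disjointness of the $G_i^\gamma$ (immediate from the recursive definition), that the endpoints $q_{n_i}$ and $q_{n_i}+\gamma_i$ lie in $T_\gamma$, and that every gap of $T_\gamma$ coincides with some $G_i^\gamma$. The endpoint verification rests on the observation that $(0,1)\setminus\bigcup_{j<i}G_j^\gamma$ is \emph{closed} in $(0,1)$, so containment of the open interval $(q_{n_i},q_{n_i}+\gamma_i)$ passes to its closure, meaning the endpoints avoid every earlier gap; applying the same argument at later steps shows $G_j^\gamma$ for $j>i$ has its closure disjoint from $G_i^\gamma$, so the endpoints of $G_i^\gamma$ also avoid all later gaps and hence belong to $T_\gamma$. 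Exhaustiveness is a connectedness argument: any gap $(c,d)$ of $T_\gamma$ is a connected subset of $[0,1]\setminus T_\gamma=\bigsqcup_i G_i^\gamma$, hence lies in a single $G_i^\gamma$, and must coincide with it since $c,d\in T_\gamma$ cannot sit inside a gap.

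Property (4) is the technical heart. For the no-interval assertion, assume by contradiction that $(a,b)\subset T_\gamma$ and fix a rational $q_n\in(a,b)$. For every index $i$ with $\gamma_i<b-q_n$ (which holds for all sufficiently large $i$), the interval $(q_n,q_n+\gamma_i)$ lies inside $(a,b)\subset T_\gamma$ and therefore avoids every gap, so $q_n$ is a valid candidate at step $i$, forcing $n_i\leq n$ by minimality. But the map $i\mapsto n_i$ is injective: if $n_i=n_j=m$ with $i<j$, then $G_j^\gamma=(q_m,q_m+\gamma_j)\subset(q_m,q_m+\gamma_i)=G_i^\gamma$ since $\gamma_j<\gamma_i$, contradicting disjointness. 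Thus only finitely many $n_i$ can be $\leq n$, a contradiction. For the ``consequently'' clause, given $y\in(x,x+\delta)\cap T_\gamma$, the no-interval property precludes any neighborhood of $y$ from being contained in $T_\gamma$, and I would extract a sequence $y_k\in T_\gamma\cap(x,x+\delta)$ converging to $y$ from one side; each open interval $(y_{k+1},y_k)$ is nontrivial and therefore meets some gap of $T_\gamma$, and since the endpoints $y_k\in T_\gamma$ cannot lie inside any gap, the whole gap is trapped inside $(y_{k+1},y_k)\subset(x,x+\delta)$, with distinct $k$'s yielding distinct gaps. The main obstacle is securing the accumulation step, i.e.\ ruling out that $(x,x+\delta)\cap T_\gamma$ is a single isolated point; I expect this to follow from combining the no-interval conclusion with the positive-measure estimate in (1) applied to the portion of $T_\gamma$ landing inside $(x,x+\delta)$.
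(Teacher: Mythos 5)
Your arguments for compactness, for (1), (2), (3), and for the no-interval half of (4) are correct and take essentially the paper's route; you are in fact more explicit than the paper in two places, namely the pigeonhole justification that $n_i$ exists at every step of the construction, and the injectivity of the map $i\mapsto n_i$, on which the paper's phrase ``there must exist $i_1\geq i_0$ such that $n_0=\min\{\dots\}$'' silently relies.

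The genuine gap is exactly the one you flag in the ``consequently'' clause of (4), and it cannot be closed the way you hope. (The paper's own proof stops after the no-interval statement and treats the rest as immediate.) A closed set with empty interior may still have isolated points, and near an isolated point a short interval meets the set without containing any complete gap; your proposed remedy via the measure bound of (1) fails because that bound is global, whereas $T_\gamma\cap(x,x+\delta)$ can perfectly well be a single point. Indeed the clause as stated can fail for some $\gamma\in\Delta$: if the fixed enumeration begins $q_1=1/2$, $q_2=5/8$ and one takes $\gamma_1=1/8$, $\gamma_2=1/16$ (all conditions (i)--(iii) hold), then $G^\gamma_1=(1/2,5/8)$, the candidate $(q_1,q_1+\gamma_2)$ is rejected at step $2$ because it meets $G^\gamma_1$, so $n_2=2$ and $G^\gamma_2=(5/8,11/16)$. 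These two gaps abut, $5/8$ is an isolated point of $T_\gamma$, and every sufficiently small interval around $5/8$ meets $T_\gamma$ while containing no gap of $T_\gamma$.

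The statement that is both true and actually what is needed downstream (e.g.\ in the proof of Theorem~\ref{nolipschitzmap}, where the relevant portion of the thread has positive measure) replaces the hypothesis ``$(x,x+\delta)\cap T_\gamma\neq\emptyset$'' by ``$(x,x+\delta)\cap T_\gamma$ is infinite'': then $T_\gamma$ has an accumulation point inside $(x,x+\delta)$, a one-sided monotone sequence $y_k$ exists, and your extraction of pairwise distinct gaps trapped in the consecutive intervals $(y_{k+1},y_k)$ goes through verbatim. I would record your argument under that strengthened hypothesis rather than hunt for a proof of the clause as written.
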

\begin{proof}
Notice that the Lebesgue measure of $T_\gamma$ is greater than or equal to $1-\sum_{i=1}^\infty 2^{-(i+1)}= 1/2$ for all possible $\gamma$ by property $(ii)$, and the points $0$ and $1$ are not in $G^\gamma_i$ for any $i\in\mathbb{N}$ by construction and property $(iii)$, so $(1)$ and $(2)$ are clear. 

To see $(3)$, we need to prove that $G^\gamma_i$ is a gap in $T^\gamma$ for all $i\in\mathbb{N}$, and that every gap of $T^\gamma$ is one of $G^\gamma_i$ for some $i\in\mathbb{N}$. Consider an interval $G^\gamma_i=(q_{n_i},q_{n_i}+\gamma_i)$. We have directly by construction that $G^\gamma_i\cap T=\emptyset$, so we only need to see that the endpoints of $G^\gamma_i$ are in $T$ to prove that it is a gap of $T$. If one of the endpoints $p_i$ of $G^\gamma_i$ is not in $T$, there must exist $j\in \mathbb{N}$ such that $p_i\in G^\gamma_i$. Since $C^\gamma_j$ is open and $p_i$ is in the closure of $G^\gamma_i$, we have that $G^\gamma_i\cap C^\gamma_j\neq \emptyset$, a contradiction with the choice of $n_i$ and $n_j$. We conclude that $G^\gamma_i$ is a gap of $T_\gamma$.

Next, let $x,y\in T_\gamma$ with $x<y$ and $(x,y)_{T_\gamma}=\emptyset$. For a point $p\in (x,y)$, since $p\notin T_\gamma$, there must exist $i\in\mathbb{N}$ such that $p\in G^\gamma_i$. The interval $G^\gamma_i$ is contained in $(x,y)$ because both $x$ and $y$ belong to $T_\gamma$. Moreover, since the endpoints of $G^\gamma_i$ belong to $T_\gamma$, we necessarily have that $G^\gamma_i=(x,y)$, and we are done with $(3)$. 

Finally, the set $T_\gamma$ is nowhere dense, since it contains no intervals. Indeed, suppose there is an interval $(x,x+\delta)\subset T_\gamma$ for some $x\in [0,1]$ and $\delta>0$ with $x+\delta<1$. The subinterval $(x,x+\delta/2)$ contains a rational number $q_{n_0}$. Since $(\gamma_i)_{i=1}^\infty$ is decreasing and converging to $0$, there must exist $i_0$ such that $\gamma_{i}<\delta/2$ for all $i\geq i_0$. Then, for all $i\geq i_0$, the natural number $n_0$ verifies that $(q_{n_0},q_{n_0}+\gamma_i)\subset T_\gamma$, and in particular
$$(q_{n_0},q_{n_0}+\gamma_i)\subset (0,1)\setminus\bigg(\bigcup_{j<i}G_j^\gamma\bigg). $$
Therefore, there must exist $i_1\geq i_0$ such that $n_0= \min\bigg\{n\in\mathbb{N}\colon (q_n,q_n+\gamma_{i_1})\subset(0,l)\setminus\bigg(\bigcup_{j<i_1}G_j^\gamma\bigg)\bigg\}$, which implies that $G^\gamma_{i_1}=(q_{n_0},q_{n_0}+\gamma_{i_1})$; a contradiction with the assumption that $(q_{n_0},q_{n_0}+\delta/2)\subset T_\gamma$.
\end{proof}

Now, given any $\gamma\in\Delta$ and any $0<a\leq 1$, we may assign the metric $d_{1,a}$ as defined at the beginning of this section to the set $T_\gamma$, such that $(T_\gamma,d_{1,a})$ is a thread. We will denote by $T_\gamma(1,a)$ the thread of length $1$ and width $a$ formed by endowing the subset $T_\gamma$ as defined above for $\gamma\in \Delta$ with the metric $d_{1,a}$.

With Proposition \ref{propertieslgamma} we have that any countable family of these threads verifies the hypothesis of Theorem \ref{nolipschitzmap}. In fact, any countable family of subthreads with measure uniformly bounded from below also verifies the hypothesis of Theorem \ref{nolipschitzmap}. Moreover, given such a countable family of threads $\{T_n\}_{n\in\mathbb{N}}$ and $K\geq 1$, for the sequence $\gamma^*=\{\gamma^*_k\}_{k\in\mathbb{N}}$ obtained by Theorem \ref{nolipschitzmap} we can always find another sequence $\gamma^0=\{\gamma^0_k\}_{k\in\mathbb{N}}\in \Delta$ such that $\gamma^0_k\leq\gamma^*_k$ for all $k\in\mathbb{N}$. Hence, there exists a thread of the form $T_{\gamma^0}(1,a)$ that cannot be mapped with a $K$-Lipschitz function preserving the extreme points onto any $T_n$ for any $n\in\mathbb{N}$, provided $a<\text{width}(T_n)/K$.

Observe that thanks to the properties of the generic sets $T_\gamma$, we can obtain the following fact about the thread $T_\gamma(1,a)$:
\begin{proposition}
\label{totallyseparated}
Let $\gamma\in \Delta$, and let $T_\gamma(1,a)$ be the thread of length $1$ and width $a$ associated with $\gamma$. Then $T_\gamma(1,a)$ is totally separated, i.e.: If $p$ and $q$ are two different points in $T_\gamma(1,a)$, there exist two disjoint open and closed subsets $S_1,S_2\subset T_\gamma(1,a)$ such that $p\in S_1$, $q\in S_2$, and $T_\gamma(1,a)=S_1\cup S_2$. As a consequence, for any point $p\in T_\gamma(1,a)$ and any $\varepsilon>0$, there exists an open and closed subset $S$ of diameter less than $\varepsilon$ such that $p\in S$. 
\end{proposition}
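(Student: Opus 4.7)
The approach rests on the observation that the thread metric $d_{1,a}$ and the Euclidean metric agree locally on $T_\gamma$: whenever $u,v\in T_\gamma$ satisfy $|u-v|<a$, the formula defining $d_{1,a}$ gives $d_{1,a}(u,v)=|u-v|$. Consequently, the two metrics induce the same topology on $T_\gamma$, a set is clopen in one iff it is clopen in the other, and on any subset of Euclidean diameter strictly less than $a$ the two diameters coincide.

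To prove the first assertion, fix distinct points $p<q$ in $T_\gamma$ and locate a gap $(x,y)$ of $T_\gamma$ with $p\le x<y\le q$: if $(p,q)\cap T_\gamma=\emptyset$ then $(p,q)$ is itself a gap and I take $x=p$, $y=q$; otherwise Proposition \ref{propertieslgamma}(4) ensures $(p,q)$ contains infinitely many gaps, and I pick any one. With such a gap fixed, set
$$S_1=T_\gamma\cap[0,x]=T_\gamma\cap[0,y),\qquad S_2=T_\gamma\cap[y,1]=T_\gamma\cap(x,1],$$
where the two descriptions of each set coincide because $(x,y)\cap T_\gamma=\emptyset$. Each $S_i$ is thus both relatively closed and relatively open in $T_\gamma$, hence clopen in $T_\gamma(1,a)$ by the preliminary observation; they are disjoint, their union is $T_\gamma$, and $p\in S_1$, $q\in S_2$.

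For the consequence, fix $p\in T_\gamma(1,a)$ and $\varepsilon>0$, and put $\delta=\tfrac{1}{2}\min\{\varepsilon,a\}$. Note first that $T_\gamma$ has no isolated points, since by Proposition \ref{propertieslgamma}(4) every open interval meeting $T_\gamma$ contains infinitely many gaps, whose endpoints lie in $T_\gamma$. I now choose $u,v\in T_\gamma$ with $u\le p\le v$ and $v-u<2\delta$ such that either $u=0$ or $u$ is the right endpoint of a gap of $T_\gamma$, and either $v=1$ or $v$ is the left endpoint of a gap of $T_\gamma$. To produce $u$: if $p=0$, set $u=0$; if $(p-\delta,p)\cap T_\gamma\ne\emptyset$, use Proposition \ref{propertieslgamma}(4) to find a gap of $T_\gamma$ inside $(p-\delta,p)$ and let $u$ be its right endpoint; otherwise $(p-\delta,p)$ is a connected subset of the complement of $T_\gamma$ in $[0,1]$, hence contained in a single gap of $T_\gamma$ whose right endpoint must be $p$ (because $p\in T_\gamma$), and I set $u=p$. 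The construction of $v$ is symmetric, and the absence of isolated points in $T_\gamma$ rules out the degenerate case $u=p=v$, so $u<v$. Arguing as in the previous paragraph by pushing $u$ and $v$ across their respective gaps (or using $0$, $1$ where appropriate) gives both a closed and an open description of $S=T_\gamma\cap[u,v]$, showing that $S$ is clopen; it contains $p$ and has Euclidean diameter $v-u<2\delta\le\min\{\varepsilon,a\}$, so by the preliminary observation its $d_{1,a}$-diameter also equals $v-u<\varepsilon$. The main technical subtlety is this case analysis, forced by the boundary of $[0,1]$ and by points $p$ that are themselves gap endpoints on one side.
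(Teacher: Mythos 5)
Your proof is correct and follows essentially the same route as the paper's: locate a gap of $T_\gamma$ between $p$ and $q$ (taking $(p,q)$ itself if it misses $T_\gamma$, otherwise invoking part (4) of Proposition \ref{propertieslgamma}) and split the thread at that gap into two clopen pieces. The only difference is that you carry out in full detail the second statement, which the paper dispatches in one line as an immediate consequence of the first; your case analysis there (gap endpoints versus the boundary points $0$ and $1$, and the use of the absence of isolated points) is sound.
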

\begin{proof}
Put $T=T_\gamma(1,a)$. Let $p$ and $q$ be two different points in $T$. Suppose without loss of generality that $p<q$. If the interval $(p,q)_T$ is empty, then the result follows considering $S_1=[0,p]_T$ and $S_2=[q,1]_T$. Otherwise, if $(p,q)_T$ is nonempty, by property (4) in Proposition \ref{propertieslgamma} there exists a gap $(x,y)$ in $T$ such that $p<x$ and $y<q$. Put now $S_1=[0,x]_T$ and $S_2=[y,1]_T$ and the result is proven.

The second statement follows immediately from the first and from the linear structure of threads.
\end{proof}
When dealing with Lipschitz functions between threads, the image of the extreme points of a thread plays an important role for some technical arguments (as can be seen in the previous subsection). This is the motivation for introducing the next result.

\begin{proposition}
\label{changingextremes}
Let $T$ and $S$ be two threads of length $l_T$ and $l_S$, and width $a_T$ and $a_S$ respectively. Let $K\geq 1$. Suppose $T$ is totally separated. Consider $S_1$ and $S_2$ open subsets of $S$, and take $D_1$ and $D_2$ dense subsets of $S_1$ and $S_2$ respectively. 

Let $F\colon T\rightarrow S$ be a $K$-Lipschitz function such that $F(0)=A$ and $F(l_S)=B$ with $A\in S_1$ and $B\in S_2$. Then for every $\varepsilon>0$, there exists a pair of points $P,Q\in T$ with $P<Q$, a pair of points $\widehat{A}\in D_1$ and $\widehat{B}\in D_2$, and a $(K+\varepsilon)$-Lipschitz function $\widehat{F}\colon [P,Q]_T\rightarrow S$ such that $\widehat{F}(P)=\widehat{A}$ and $\widehat{F}(Q)=\widehat{B}$.
\end{proposition}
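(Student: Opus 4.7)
The plan is to locate points $P\in T$ near $0$ and $Q\in T$ near $l_T$ whose $F$-images already lie in $S_1$ and $S_2$, choose $\widehat{A}\in D_1$ and $\widehat{B}\in D_2$ extremely close to $F(P)$ and $F(Q)$ by density, and then set $\widehat{F}:=F$ on $(P,Q)_T$ with new endpoint values $\widehat{A},\widehat{B}$ at $P,Q$. The extra room $\varepsilon$ in the Lipschitz constant will absorb the endpoint perturbation, provided $P$ and $Q$ can be chosen so that they are separated from the remainder of $[P,Q]_T$ by positive ``local gaps'' $\ell_P,\ell_Q>0$.

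To do this I would first use continuity of $F$ together with the openness of $S_1,S_2$ to pick $\delta_0<a_T/2$ with $F(B_T(0,\delta_0))\subseteq S_1$ and $F(B_T(l_T,\delta_0))\subseteq S_2$; the restriction $\delta_0<a_T/2$ keeps these two balls disjoint. Because a compact totally separated metric space is zero-dimensional, $T$ admits a clopen basis at every point, so I can select clopen sets $V\ni 0$ and $W\ni l_T$ inside the respective balls. Setting $P=\sup V$ and $Q=\inf W$ (both attained by compactness) gives $F(P)\in S_1$, $F(Q)\in S_2$, and $P<Q$. Crucially, since $V$ is \emph{open} in the thread topology and $P\in V$ is its supremum, some thread ball around $P$ lies in $V$, which rules out points of $T$ immediately to the right of $P$ and so produces $\ell_P>0$ with $T\cap(P,P+\ell_P)=\emptyset$; the analogous argument for $W$ yields $\ell_Q>0$ with $T\cap(Q-\ell_Q,Q)=\emptyset$.

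Next I would fix $\eta>0$ so small that $\eta\le\tfrac{\varepsilon}{4}\min\{\ell_P,\ell_Q,d_T(P,Q)\}$ and such that $B(F(P),\eta)\subseteq S_1$, $B(F(Q),\eta)\subseteq S_2$, and invoke density of $D_1,D_2$ to pick $\widehat{A}\in D_1$ and $\widehat{B}\in D_2$ within $\eta$ of $F(P),F(Q)$. Defining $\widehat{F}$ as described, the Lipschitz verification proceeds via Proposition \ref{domainthreadisinterval} applied to the sub-thread $[P,Q]_T$, whose width equals $d_T(P,Q)$. The width condition $d(\widehat{A},\widehat{B})\le 2\eta+Kd_T(P,Q)\le(K+\varepsilon)d_T(P,Q)$ is immediate from the bound on $\eta$, and for the pointwise condition $d(\widehat{F}(x),\widehat{F}(y))\le(K+\varepsilon)|x-y|$ only the cases involving $P$ or $Q$ are nontrivial; there the triangle inequality and the Lipschitz bound on $F$ give $d(\widehat{A},F(y))\le\eta+K|y-P|$, and the gap estimate $|y-P|\ge\ell_P$ together with $\eta\le\varepsilon\ell_P/4$ upgrades this to $(K+\varepsilon)|y-P|$, as required.

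The key subtlety I would emphasize is that $\widehat{A}$ must be compared against $F(P)$ rather than against the original value $A=F(0)$: it is the positivity of the local gap $\ell_P$, a direct consequence of total separation, that controls the perturbation. Without gaps to the right of $P$ and to the left of $Q$ (or an equivalent clopen structure), no fixed perturbation $\eta>0$ could be tolerated at an endpoint, because points of $T$ accumulating at $P$ from the right would force the pointwise Lipschitz quotient to blow up. This is precisely why the hypothesis that $T$ is totally separated is the right one for the statement.
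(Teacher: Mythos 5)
Your proof is correct and takes essentially the same route as the paper's: both isolate points $P$ near $0$ and $Q$ near $l_T$ whose images lie in $S_1$ and $S_2$, use total separation to guarantee a positive gap immediately beyond $P$ and before $Q$, and then perturb the endpoint values to nearby points of $D_1$ and $D_2$, absorbing the perturbation into the $\varepsilon$ slack via the gap lengths. The only difference is cosmetic: the paper phrases the gap as $d\big([0,P]_T,\,T\setminus[0,P]_T\big)=\delta_P>0$ and leaves the final Lipschitz verification as ``routine,'' whereas you extract the gap from a clopen neighbourhood and write the verification out explicitly.
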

\begin{proof}
Since $S_1$ is open in $S$ and $F(0)\in S_1$, there exists a positive number $r>0$ such that for any point $p\in [0,r]_T$ we have $F(p)\in S_1$. The thread $T$ is totally separated, so we can find $P\in [0,r]_T$ and $\delta_P>0$ such that $d\big([0,P]_T,T\setminus\big([0,P]_T\big)\big)=\delta_P$. Similarly, we may find $Q\in T$ with $P<Q$ and $\delta_Q>0$ such that $F(Q)\in S_2$ and $d\big([Q,l_T]_T,T\setminus\big([Q,l_T]_T\big)\big)=\delta_Q$.

By density, we can find $\widehat{A}\in S_1$ and $\widehat{B}\in S_2$ such that $d\big(F(P),\widehat{A}\big)<2^{-1}\varepsilon\cdot \delta_P$ and $d\big(F(Q),\widehat{B}\big)<2^{-1}\varepsilon\cdot \delta_Q$. Define now $\widehat{F}\colon [P,Q]_T\rightarrow S$ so that 
$$ 
\widehat{F}(x)=
\begin{cases}
\widehat{A},~&\text{ if } x=P,\\
F(x),~&\text{ if } x\in (P,Q)_T,\\
\widehat{B},~ &\text{ if } x=Q.
\end{cases}
$$
It is now routine to check that $\widehat{F}$ is $(K+\varepsilon)$-Lipschitz.
\end{proof}

\section{Construction of the building blocks: Threading metric spaces}

We now want to use the threads $T_\gamma(1,a)$ we defined for $\gamma\in\Delta$ and $0<a\leq 1$ to construct non-separable complete metric spaces that will act as building blocks of the final metric space. To do this, we first formalize the notion of \emph{attachment} of metric spaces, which will allow us to ``glue" metric spaces in a convenient way. This concept has been used in many contexts in the literature, but we choose to include a definition tailored to our necessities.

\begin{definition}
Let $(M,d_M)$ be a complete metric space. Let $\mathcal{N}=\{(N_\gamma,d_\gamma)\}_{\gamma\in \Gamma}$ be a collection of pairwise disjoint and disjoint with $M$ complete metric spaces, and let $\mathcal{S}=\{S_\gamma\}_{\gamma\in \Gamma}$ be a collection of sets such that: for each $\gamma\in \Gamma$ the set $S_\gamma$ is a compact subset of $N_\gamma$, and there exists an isometry $\Phi_\gamma\colon S_\gamma\rightarrow M$ onto a subset of $M$. The \emph{attachment of $M$ with $\mathcal{N}$ by $\mathcal{S}$} is the pair $(M(\mathcal{N},\mathcal{S}),d_{\mathcal{N},\mathcal{S}})$, where 
$$M(\mathcal{N},\mathcal{S})=M\cup\bigg(\bigcup_{\gamma\in\Gamma} N_\gamma\setminus S_\gamma\bigg), $$
and 
\begin{align*}
   d_{\mathcal{N},\mathcal{S}}\colon &  M(\mathcal{N},\mathcal{S})\times M(\mathcal{N},\mathcal{S})\longrightarrow \mathbb{R}^+\\
\end{align*} 
is a map defined by 
$$ 
\footnotesize
d_{\mathcal{N},\mathcal{S}}(p,q)=
\begin{cases}
d_M(p,q),&\text{ if } p,q\in M,\\
d_\gamma(p,q), &\text{ if }p,q\in N_\gamma\setminus S_\gamma\text{ for some }\gamma\in \Gamma,\\
\min_{x\in S_\gamma}\{d_\gamma(p,x)+d_M(\Phi_\gamma(x),q)\}, &\text{ if }p\in N_\gamma\setminus S_\gamma\text{ for some }\gamma\in \Gamma,\text{ and }q\in M,\\
H_{\gamma_1,\gamma_2}(p,q), &\text{ if } p\in N_{\gamma_1}\setminus S_{\gamma_1},~ q\in N_{\gamma_2}\setminus S_{\gamma_2}\text{ for }\gamma_1\neq \gamma_2\in \Gamma,
\end{cases}
$$
where $H_{\gamma,\eta}\colon N_\gamma\times N_\eta\rightarrow \mathbb{R}^+$ is the map defined by 
$$ H_{\gamma,\eta}(p,q)=\min\{d_\gamma(p,x)+d_M(\Phi_\gamma(x),\Phi_\eta(y))+d_\eta(y,q)\colon x\in S_\gamma,~ y\in S_\eta\}.$$
\end{definition}
Notice that both minima used in the definition of $d_{\mathcal{N},\mathcal{S}}$ are well defined by compactness of the sets $S_\gamma$ for each $\gamma\in \Gamma$. Moreover, it is straightforward to check that the map $d_{\mathcal{N},\mathcal{S}}$ defines a complete metric in $M(\mathcal{N},\mathcal{S})$. 

It is also clear from the definition that the metric space $M(\mathcal{N},\mathcal{S})$ contains $M$ isometrically, as well as an isometric copy of $N_\gamma$ for each $\gamma\in\Gamma$. We may write $M\subset M(\mathcal{N},\mathcal{S})$ and $N_\gamma\subset M(\mathcal{N},\mathcal{S})$ by virtue of this fact.

With this concept, we can now define the aforementioned building blocks of the main metric space we seek to construct:

\begin{definition}
Consider a metric space $M=\{A,B\}$ formed by two points at a distance $0<a\leq 1$. Let $\mathcal{N}_a=\{T_{\gamma}(1,a)\}_{ \gamma\in\Delta}$, where $\Delta$ is the set of sequences defined in Section 1, and $T_\gamma(1,a)$ is the thread associated with $\gamma$ of width $a$. We may consider $T_\gamma(1,a)$ and $T_\eta(1,a)$ to be disjoint for $\gamma\neq \eta$. For each $T_{\gamma}(1,a)$, put $S_\gamma=\{0_\gamma,1_\gamma\}$, the set of the two extreme points of $T_{\gamma}(1,a)$. We let $\mathcal{S}_a=\{S_\gamma\}_{\gamma\in\Delta}$, and
$\Phi_\gamma:S_\gamma\to M$ as $\Phi_\gamma(0_\gamma)=A,\;\Phi_\gamma(1_\gamma)=B$.

We define the \emph{threading space} $\text{Th}(A,B)$ to be the attachment of $M$ with $\mathcal{N}_a$ by $\mathcal{S}_a$. We say that $\text{Th}(A,B)$ is \emph{anchored} at $A$ and $B$, and these two points are called the \emph{anchors} of $\text{Th}(A,B)$. If a threading space $\text{Th}(A,B)$ is fixed and there is no room for ambiguity, we write $T_\gamma(1,a)\subset \text{Th}(A,B)$ to denote the isometric copy of the thread $T_\gamma(1,a)$ contained in the threading space $\text{Th}(A,B)$.
\end{definition}
Note that in a threading space $\text{Th}(A,B)$ we have that $T_\gamma(1,a)\cap T_\eta(1,a)=\{A,B\}$ for any two different $\gamma,\eta\in\Delta$. Moreover, if $p,q\in \text{Th}(A,B)$ belong to different threads, the distance $d(p,q)$ is computed according to the definition of attachment, which in this case results in 
$$d(p,q)=\min\{d(p,A)+d(A,q),d(p,B)+d(B,q)\}. $$
See Figure \ref{Threadingspacevisualrepr} for a representation of a subset of a threading space. 
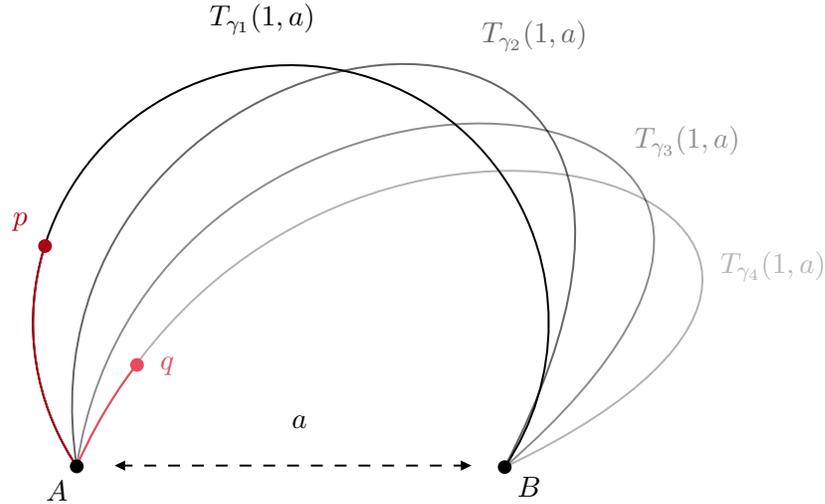
\begin{figure}
    \centering

\tikzset{every picture/.style={line width=0.75pt}} 

\begin{tikzpicture}[x=0.75pt,y=0.75pt,yscale=-1,xscale=1]

\draw  [draw opacity=0] (193,273.89) .. controls (193,273.89) and (193,273.89) .. (193,273.89) .. controls (193,273.89) and (193,273.89) .. (193,273.89) .. controls (151.79,215.03) and (166.05,133.94) .. (224.87,92.75) .. controls (283.68,51.57) and (364.76,65.9) .. (405.98,124.76) .. controls (437.76,170.15) and (436.55,228.76) .. (407.36,272.09) -- (299.49,199.33) -- cycle ; \draw   (193,273.89) .. controls (193,273.89) and (193,273.89) .. (193,273.89) .. controls (193,273.89) and (193,273.89) .. (193,273.89) .. controls (151.79,215.03) and (166.05,133.94) .. (224.87,92.75) .. controls (283.68,51.57) and (364.76,65.9) .. (405.98,124.76) .. controls (437.76,170.15) and (436.55,228.76) .. (407.36,272.09) ;
\draw  [draw opacity=0] (193,273.89) .. controls (193,273.89) and (193,273.89) .. (193,273.89) .. controls (193,273.89) and (193,273.89) .. (193,273.89) .. controls (169.12,239.8) and (163.87,198.24) .. (175.1,161.3) -- (299.49,199.33) -- cycle ; \draw  [color={rgb, 255:red, 169; green, 3; blue, 23 }  ,draw opacity=1 ] (193,273.89) .. controls (193,273.89) and (193,273.89) .. (193,273.89) .. controls (193,273.89) and (193,273.89) .. (193,273.89) .. controls (169.12,239.8) and (163.87,198.24) .. (175.1,161.3) ;
\draw [color={rgb, 255:red, 0; green, 0; blue, 0 }  ,draw opacity=0.3 ]   (191.52,271.74) .. controls (306.67,15.29) and (685.07,139.85) .. (407.36,272.09) ;
\draw [color={rgb, 255:red, 231; green, 75; blue, 93 }  ,draw opacity=1 ]   (191.52,271.74) .. controls (203.47,243.05) and (221.87,219.85) .. (221.92,220.54) ;
\draw  [fill={rgb, 255:red, 0; green, 0; blue, 0 }  ,fill opacity=1 ] (188.56,271.74) .. controls (188.56,270.1) and (189.89,268.78) .. (191.52,268.78) .. controls (193.16,268.78) and (194.48,270.1) .. (194.48,271.74) .. controls (194.48,273.38) and (193.16,274.7) .. (191.52,274.7) .. controls (189.89,274.7) and (188.56,273.38) .. (188.56,271.74) -- cycle ;
\draw  [fill={rgb, 255:red, 0; green, 0; blue, 0 }  ,fill opacity=1 ] (404.4,272.09) .. controls (404.4,270.45) and (405.73,269.13) .. (407.36,269.13) .. controls (409,269.13) and (410.33,270.45) .. (410.33,272.09) .. controls (410.33,273.73) and (409,275.05) .. (407.36,275.05) .. controls (405.73,275.05) and (404.4,273.73) .. (404.4,272.09) -- cycle ;
\draw  [dash pattern={on 4.5pt off 4.5pt}]  (213.6,271.33) -- (387.6,271.33) ;
\draw [shift={(390.6,271.33)}, rotate = 180] [fill={rgb, 255:red, 0; green, 0; blue, 0 }  ][line width=0.08]  [draw opacity=0] (5.36,-2.57) -- (0,0) -- (5.36,2.57) -- cycle    ;
\draw [shift={(210.6,271.33)}, rotate = 0] [fill={rgb, 255:red, 0; green, 0; blue, 0 }  ][line width=0.08]  [draw opacity=0] (5.36,-2.57) -- (0,0) -- (5.36,2.57) -- cycle    ;
\draw [color={rgb, 255:red, 0; green, 0; blue, 0 }  ,draw opacity=0.6 ]   (191.52,271.74) .. controls (154.67,31.29) and (562.95,-27.91) .. (407.36,272.09) ;
\draw [color={rgb, 255:red, 0; green, 0; blue, 0 }  ,draw opacity=0.45 ]   (191.52,271.74) .. controls (229.07,22.49) and (650.95,60.09) .. (407.36,272.09) ;
\draw  [color={rgb, 255:red, 169; green, 3; blue, 23 }  ,draw opacity=1 ][fill={rgb, 255:red, 169; green, 3; blue, 23 }  ,fill opacity=1 ] (172.56,160.54) .. controls (172.56,158.9) and (173.89,157.58) .. (175.52,157.58) .. controls (177.16,157.58) and (178.48,158.9) .. (178.48,160.54) .. controls (178.48,162.18) and (177.16,163.5) .. (175.52,163.5) .. controls (173.89,163.5) and (172.56,162.18) .. (172.56,160.54) -- cycle ;
\draw  [color={rgb, 255:red, 231; green, 75; blue, 93 }  ,draw opacity=1 ][fill={rgb, 255:red, 231; green, 75; blue, 93 }  ,fill opacity=1 ] (218.96,220.54) .. controls (218.96,218.9) and (220.29,217.58) .. (221.92,217.58) .. controls (223.56,217.58) and (224.88,218.9) .. (224.88,220.54) .. controls (224.88,222.18) and (223.56,223.5) .. (221.92,223.5) .. controls (220.29,223.5) and (218.96,222.18) .. (218.96,220.54) -- cycle ;
\draw [draw opacity=0]   (102,168) -- (575.92,170.8) ;

\draw (174.59,277.31) node [anchor=north west][inner sep=0.75pt]    {$A$};
\draw (412.33,275.49) node [anchor=north west][inner sep=0.75pt]    {$B$};
\draw (298.6,244.73) node [anchor=north west][inner sep=0.75pt]    {$a$};
\draw (157.79,142.08) node [anchor=north west][inner sep=0.75pt]  [color={rgb, 255:red, 169; green, 3; blue, 23 }  ,opacity=1 ]  {$p$};
\draw (232.19,214.88) node [anchor=north west][inner sep=0.75pt]  [color={rgb, 255:red, 231; green, 75; blue, 93 }  ,opacity=1 ]  {$q$};
\draw (256.99,36.72) node [anchor=north west][inner sep=0.75pt]    {$T_{\gamma _{1}}( 1,a)$};
\draw (394.39,45.32) node [anchor=north west][inner sep=0.75pt]  [color={rgb, 255:red, 0; green, 0; blue, 0 }  ,opacity=0.6 ]  {$T_{\gamma _{2}}( 1,a)$};
\draw (471.39,98.52) node [anchor=north west][inner sep=0.75pt]  [color={rgb, 255:red, 0; green, 0; blue, 0 }  ,opacity=0.45 ]  {$T_{\gamma _{3}}( 1,a)$};
\draw (514.59,161.52) node [anchor=north west][inner sep=0.75pt]  [color={rgb, 255:red, 0; green, 0; blue, 0 }  ,opacity=0.3 ]  {$T_{\gamma _{4}}( 1,a)$};

\end{tikzpicture}

    \caption{Subset of $\text{Th}(A,B)$. The distance from $p$ to $q$ is $d(p,A)+d(A,q)$.}
    \label{Threadingspacevisualrepr}
\end{figure}

By definition, in a metric space $M(\mathcal{N},\mathcal{S})$ formed by attachment (and in threading spaces in particular), given a point $p\in N_\gamma $ for some $\gamma\in \Gamma$ and a point $x_1\in M$, there exists a point $s_1\in S_\gamma$ such that $d(p,x_1)=d(p,s_1)+d(s_1,x_1)$. However, it is possible that for a different point $x_2\in M$, the point $s_2\in S_\gamma$ such that the identity $d(p,x_2)=d(p,s_2)+d(s_2,x_2)$ holds, is different from $s_1$. Points in $N_\gamma$ that use always the same point in $S_\gamma$ to compute their distance to the rest of the space are especially relevant to our discussion. 

In general, given a metric space $M$ and a closed subset $N$, we say that a point $p\in N$ is \textit{bound to }$s\in N$ in $N$ if for every $x\in M\setminus N$ we have $d(p,x)=d(p,s)+d(s,x)$.

For example, in a threading space $\text{Th}(A,B)$, it is not hard to check that a point $p$ in a thread $T_\gamma(1,a)$ is bound to $A$ in $T_\gamma(1,a)$ if and only if $d(p,A)<d(p,B)$ and $d(p,A)\leq \frac{1-a}{2}$ (the analogous result for $B$ holds as well).

In the final section of the article we will deal with Lipschitz functions defined on a single thread and with image in metric spaces formed by attachment. We finish this section by defining two simple concepts and proving a result that will help us simplify this type of maps.

Now, if $T$ is a thread, $M$ is a metric space, $N$ is a closed subset of $M$, and $F\colon T\rightarrow M$ is a Lipschitz function, we say that an extended interval $I$ is \textit{maximal} with respect to $F$ and $N$ if $F(I)\subset N$ and every extended interval $J=[a',b']_T$ in $T$ that contains $I$ and such that $F(J)\subset N$ is equal to $I$. We have the following straightforward result:

\begin{proposition}
\label{maximalwithentryeqexit}
Let $T$ be a thread of length $l$ and width $a$. Let $M$ be a metric space, let $N$ be a closed subset of $M$, and let $F\colon T\rightarrow M$ be a Lipschitz function. If an extended interval $I$ in $T$ with extremes $a,b\in T$ is maximal with respect to $F$ and $N$, and there exists $s\in N$ such that both $F(a)$ and $F(b)$ are bound to $s$ in $N$, then the function $\widehat{F}\colon T\rightarrow M$ defined by:
$$
\widehat{F}(x)=
\begin{cases}
s,&\text{ if } x\in I,\\
F(x), &\text{ if }x\in T\setminus I
\end{cases}
$$
is Lipschitz with $\|\widehat{F}\|_\text{Lip}=\|F\|_\text{Lip}$.
\end{proposition}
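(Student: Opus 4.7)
The plan is to invoke Proposition \ref{domainthreadisinterval} to reduce the claim to verifying, with $K=\|F\|_{\text{Lip}}$ and $\alpha,\beta\in T$ denoting the extremes of $I$, the two inequalities $d(\widehat{F}(0),\widehat{F}(l))\leq Ka$ and $d(\widehat{F}(x),\widehat{F}(y))\leq K|x-y|$ for all $x,y\in T$. The subcases where $x,y\in I$ (trivial, $\widehat{F}$ is constantly $s$ there) or $x,y\in T\setminus I$ (immediate, $\widehat{F}$ agrees with $F$ there) are routine, so the real content concerns pairs straddling $I$.

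The key observation I would extract from maximality is the following: for each extreme $e\in\{\alpha,\beta\}$ of $I$ and each ``direct side'' of $I$ adjacent to $e$ that meets $T\setminus I$, there exist points $z\in T\setminus I$ with $F(z)\notin N$ arbitrarily close to $e$ in $|\cdot|$. Otherwise one could enlarge $I$ on that side to a strictly larger extended interval whose image under $F$ still lies in $N$, contradicting maximality. Combined with $F(e)$ being bound to $s$, this yields the crucial estimate $d(s,F(z))\leq d(F(e),F(z))\leq Kd(e,z)\leq K|e-z|$.

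For the direct bound with $x\in I$ and $y\in T\setminus I$, WLOG $x\leq y$, a short case check on the form of $I$ shows that the real segment $[x,y]$ contains some extreme $e\in\{\alpha,\beta\}$ of $I$, with $y$ lying on the outer side of $e$. Choosing $z_n\in T\setminus I$ with $z_n\to e$ in $|\cdot|$ and $F(z_n)\notin N$ as above, the triangle inequality yields
$$d(\widehat{F}(x),\widehat{F}(y))=d(s,F(y))\leq d(s,F(z_n))+d(F(z_n),F(y))\leq K|e-z_n|+K|z_n-y|,$$
and letting $n\to\infty$ gives $d(s,F(y))\leq K|e-y|\leq K|x-y|$. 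For the wrap bound, the only nontrivial configuration is (WLOG) $0\in I$, $l\notin I$, which forces $I=[0,\beta]_T$ with $\beta<l$; then the extended interval $[0,\beta]_T\cup\{l\}$ strictly contains $I$, so maximality forces $F(l)\notin N$, and bound-ness of $F(0)$ gives $d(\widehat{F}(0),\widehat{F}(l))=d(s,F(l))\leq d(F(0),F(l))\leq Ka$.

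The main obstacle I anticipate is the careful case analysis required to confirm, in every configuration of $I$, that the extreme $e$ lying on the real segment $[x,y]$ can indeed be approached from $T\setminus I$ in $|\cdot|$ with $F$-values outside $N$; this is where the two forms of extended interval interplay, with the wrap-around form $[0,p]_T\cup[q,l]_T$ arising naturally when extending a first-form $I$ past $0$ or $l$ to produce the approximating sequence or, in the wrap bound, the single point $l$ witnessing $F(l)\notin N$.
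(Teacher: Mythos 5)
Your overall route is the paper's: reduce to the two inequalities of Proposition \ref{domainthreadisinterval}, dispose of the pairs lying on the same side of $I$, and handle straddling pairs using the fact that the extremes of $I$ are mapped to points bound to $s$. Your treatment of the wrap bound is correct (and in fact more explicit than the paper's, since you justify $F(l)\notin N$ via the extended interval $[0,\beta]_T\cup\{l\}$). However, your ``key observation'' is false as stated, and this is a genuine gap. You claim that maximality forces the existence of points $z\in T\setminus I$ with $F(z)\notin N$ \emph{arbitrarily close to the extreme $e$ in $|\cdot|$}, because otherwise $I$ could be enlarged. This breaks down when $T$ has a gap of positive length immediately adjacent to $e$ on the relevant side: for $\varepsilon$ smaller than that gap there are \emph{no} points of $T\setminus I$ within $\varepsilon$ of $e$, so the proposed enlargement adds nothing and maximality yields no contradiction. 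For instance, take $T=[0,1/4]\cup[1/2,1]$, $I=[0,1/4]_T$, and $F$ with $F(I)\subset N$ and $F(z)\notin N$ for all $z\in[1/2,1]$: then $I$ is maximal and the side to the right of $e=1/4$ meets $T\setminus I$, yet no point of $T\setminus I$ lies within $1/5$ of $e$. Since the threads of this paper are totally disconnected, an extreme of a maximal interval sitting at the endpoint of a gap is the typical situation, not a pathology; so the step ``choose $z_n\in T\setminus I$ with $z_n\to e$ and $F(z_n)\notin N$'' cannot in general be carried out.

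The consequence of maximality that is actually available (and is what the paper uses) is weaker but sufficient: for the given straddling pair $x\in I$, $y\in T\setminus I$ with $x<y$, there is a \emph{single} witness $z\in[x,y]_T$ with $F(z)\notin N$, since otherwise $I\cup[x,y]_T$ would be an extended interval strictly containing $I$ (it contains $y$) with image in $N$. A short check of the two forms of $I$ shows that any such $z$ automatically lies in $(e,y]$, where $e$ is the extreme of $I$ separating $x$ from $y$; your chain of inequalities then goes through verbatim with this one $z$ and no limiting procedure, because $|e-z|+|z-y|=|e-y|\leq|x-y|$ already holds exactly. This is precisely the paper's estimate, which routes $F(x)\to F(t)\to F(\text{extreme})\to F(y)$ through one such witness $t$. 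So the repair is a one-line change to the existence statement, but as written the key lemma you rely on is incorrect.
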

\begin{proof}
Put $K=\|F\|_\text{Lip}$. We will start by proving that
\begin{equation}
\label{widehatFverifiescondition}
d\big(\widehat{F}(0),\widehat{F}(l)\big)\leq K\cdot a.    
\end{equation}
If both $0$ and $l$ belong to the extended interval $I$ then it follows trivially. Similarly, if $0$ and $l$ belong to $T\setminus I$ then the inequality follows since $F$ is $K$-Lipschitz. Hence, suppose first that $0\in I$ and $l\in T\setminus I$. Then, we have necessarily that $a= 0$, and so $F(0)$ is bound to $s$ in $N$. This implies that 
\begin{align*}
    d\big(\widehat{F}(0),\widehat{F}(l)\big)&=d\big(s,F(l)\big)\leq d\big(F(0),s\big)+d\big(s,F(l)\big)\\
    &=d\big(F(0),F(l)\big)\leq K\cdot a
\end{align*}
A similar argument shows that if $0\in T\setminus I$ and $l\in I$ the same inequality holds. Hence we conclude that equation \eqref{widehatFverifiescondition} is verified.

Next, we prove that for every $x,y\in T$ with $x<y$ we have
\begin{equation}
\label{widehatFverifiescondition2}
d(\widehat{F}(x),\widehat{F}(y))\leq K(y-x).
\end{equation}

As before, we may only check this holds for $x,y\in T$ with $x<y$ such that $x\in T\setminus I$ and $y\in I$. By maximality of $I$, there exists $t\in T$ with $x\leq t<y$ such that $F(t)\notin N$. Additionally, since $t$ is not in $I$ but $y$ does belong to the extended interval, one of the extremes $a$ or $b$ of $I$ belongs to $(t,y]_T$. We may suppose without loss of generality that $x\leq t<a\leq y$. Notice that $d\big(F(t),s\big)\leq d\big(F(t),F(a)\big)$ because $a$ is bound to $s$ in $N$. Then we have:
\begin{align*}
    d\big(\widehat{F}(x),\widehat{F}(y)\big)&\leq d\big(F(x),F(t)\big)+d\big(F(t),s\big)\\
    &\leq d\big(F(x),F(t)\big)+d\big(F(t),F(a)\big)+d\big(F(a),F(y)\big)\\
    &\leq  K\big((t-x)+(a-t)+(y-a)\big)=K(y-x).
\end{align*}
This proves that equation \eqref{widehatFverifiescondition2} holds as well.

Using both equations \eqref{widehatFverifiescondition} and \eqref{widehatFverifiescondition2} we can apply Proposition \ref{domainthreadisinterval} to obtain that $\|\widehat{F}\|_\text{Lip}\leq K$.
\end{proof}

The process used to construct the skein metric space which fails to have any non-trivial separable Lipschitz retracts is to keep attaching threading spaces inductively such that any two distinct points of the skein act as the anchors to a threading space contained in the skein\footnote{``\emph{skein}: a length of yarn or thread collected together into the shape of a loose ring'' (\citefield{Skein}{booktitle} \citeyear{Skein}).}. As we are going to see in the final section, this construction presents its own technical difficulties. However, the main results of the first two sections will be very useful in this regard.

\section{Construction of the skein metric spaces}

The final metric space will be constructed using transfinite induction. Let us discuss this process in general for limit ordinal numbers: 

Let $\kappa$ be a limit ordinal number. Suppose that $\{(M_\alpha,d_\alpha)\}_{\alpha<\kappa}$ is a transfinite sequence of metric spaces that are \emph{increasing}, in the sense that $M_\alpha\subset M_\beta$ if $\alpha<\beta$ and the restriction of $d_\beta$ to $M_\alpha$ results in the metric $d_\alpha$. Then we may define the metric space $(M_\kappa,d_\kappa)$ where $M_\kappa=\bigcup_{\alpha<\kappa}M_\alpha$, and $d_\kappa$ is defined for any $p,q\in M_\kappa$ as $d_\kappa(p,q)=d_\alpha(p,q)$ where $\alpha<\kappa$ is the least ordinal number such that $p,q\in M_\alpha$. It is straightforward to check that the metric $d_\kappa$ is well defined and $(M_\kappa,d_\kappa)$ is indeed a metric space.

We will call $(M_\kappa,d_\kappa)$ the metric space \emph{generated by $\{(M_\alpha,d_\alpha)\}_{\alpha<\kappa}$}, and as usual we may omit the mention of the metric $d_\kappa$ when referring to it if there is no room for ambiguity. If $\kappa$ is an ordinal with uncountable cofinality (i.e., the supremum of any countable sequence of ordinals $(\alpha_n)_n$ such that $\alpha_n<\kappa$ for all $n\in\mathbb{N}$ is strictly smaller than $\kappa$), then the metric space $M_\kappa$ generated by $\{M_\alpha\}_{\alpha<\kappa}$ is complete, provided each $M_\alpha$ is complete for every $\alpha<\kappa$. To see this, consider any Cauchy sequence $(p_n)_n$ in $M_\kappa$. Each $p_n$ belongs to $M_{\alpha_n}$ for some ordinal $\alpha_n<\kappa$. Since $\kappa$ has uncountable cofinality, the supremum $\alpha^*=\sup_n(\alpha_n)$ is strictly smaller than $\kappa$. Hence the sequence $(p_n)_n$ belongs to the complete metric space $M_{\alpha^*}$, and therefore it is convergent in $M_{\alpha^*}$ to a point $p^*$. The point $p^*$ belongs to $M_\kappa$, and clearly $(p_n)_n$ converges to $p^*$ in $M_\kappa$ as well. 

\subsection{Construction of the skein metric spaces}

We are going to construct by transfinite induction an increasing class of complete metric spaces $\{\text{Sk}(\beta)\}_{\beta}$ for every ordinal $\beta$, called the \emph{$\beta$-skein metric spaces}. The complete metric space failing to have any non-trivial separable Lipschitz retract is the $\omega_1$-skein space $\text{Sk}(\omega_1)$ (or, more generally, any $\beta$-skein space such that the cofinality of $\beta$ is uncountable).

Consider at the first step the $0$-skein metric space $M_0=\{A,B\}$ formed by two points at distance $1/2$, and put $G_0=\{A,B\}$. Suppose we have defined incresingly the $\alpha$-skein spaces $\{\text{Sk}(\alpha)\}_{\alpha<\beta}$ up to an ordinal $\beta$. If $\beta$ is a limit ordinal, simply define $\text{Sk}(\beta)$ as the completion of generated metric space $\bigcup_{\alpha<\beta}\text{Sk}(\alpha)$ in the way described above, which contains isometrically the previous skein spaces $\text{Sk}(\alpha)$ for all $\alpha<\beta$. Notice that if $\beta$ has uncountable cofinality, it is not necessary to take the completion.

Suppose now that $\beta=\lambda+1$ for an ordinal $\lambda$. For every $p$ in the skein $\text{Sk}(\lambda)$ and every $q\in G_\lambda=\text{Sk}(\lambda)\setminus \bigg(\bigcup_{\alpha<\lambda}\text{Sk}(\alpha)\bigg)$ with $p\neq q$ and $d(p,q)\leq 1/2$, we may consider the threading space $\text{Th}(p,q)$ as defined in section 3. Take now the family of complete metric spaces $\mathcal{N}_\lambda=\{\text{Th}(p,q)\}_{\{p,q\}\in \Gamma_\lambda}$, where 
$$\Gamma_\lambda=\big\{\{p,q\}\subset \text{Sk}(\lambda)\colon p\in \text{Sk}(\lambda),~ q\in G_\lambda,~0<d(p,q)\leq 1/2\big\},$$
which we may take to be pairwise disjoint and disjoint with $\text{Sk}(\lambda)$. For any $\{p,q\}\in \Gamma_\lambda$, we have by definition of the threading space $\text{Th}(p,q)$ that there is an isometry $\Phi_{\{p,q\}}$ from the set of anchor points $\text{An}_{\{p,q\}}$ of $\text{Th}(p,q)$ onto the set $\{p,q\}$ in $\text{Sk}(\lambda)$. Therefore, considering $\mathcal{S}_\lambda=\{\text{An}_{\{p,q\}}\}_{\{p,q\}\in\Gamma_\lambda}$ we can define $\text{Sk}(\beta)$ as the attachment of $\text{Sk}(\lambda)$ with $\mathcal{N}_\lambda$ by $\mathcal{S}_\lambda$. The resulting metric space $\text{Sk}(\beta)$ is the $\beta$-skein and it is a complete metric space containing isometrically the previous skein space $\text{Sk}(\lambda)$. The induction process is finished and we have defined the $\beta$-skein metric space for every ordinal number $\beta$.

Intuitively, we may describe the previous process in the following way: If $\beta$ is a limit ordinal, then the $\beta$-skein space is the completion (if necessary) of the union of all previous skein spaces. If $\beta$ is the successor of an ordinal $\lambda$, then the $\beta$-skein is formed by attaching a threading space at every pair of points closer than $1/2$ and such that at least one of them was newly introduced at the previous step $\lambda$. 

For a subset $S$ of a skein space $\text{Sk}(\beta)$, we may define its \emph{(Skein) order}, written $\text{ord}(S)$, as the least ordinal $\alpha\leq\beta$ such that $S\subset \text{Sk}(\alpha)$. For a point $p\in \text{Sk}(\beta)$, we write $\text{ord}(p)=\text{ord}(\{p\})$. For any ordinal $\beta$, the \emph{(skein) generation of order $\beta$} is the set $G_\beta=\text{Sk}(\beta)\setminus\bigg(\bigcup_{\alpha<\beta}\text{Sk}(\alpha)\bigg)$. 

\begin{figure}
    \centering
    \includegraphics[width=\textwidth]{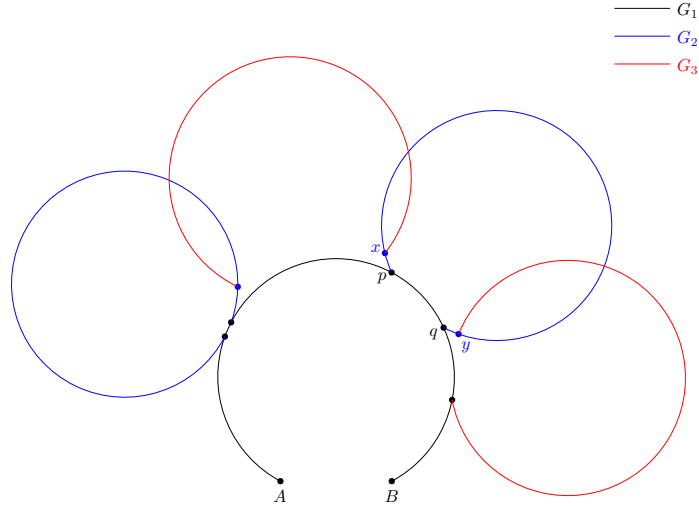}
    \caption{Subset of the skein space $\text{Sk}(3)$. The points $x$ and $y$ are bound to $p$ and $q$ respectively.}    
    \label{SkeinSpace3}
\end{figure}

Figure \ref{SkeinSpace3} is a conceptual representation of a subset of the skein $\text{Sk}(3)$, which contains 3 different generations (the gaps in the threads have been ignored for the sake of clarity). The distance between the points $x$ and $y$ in the figure are computed by $d(x,y)=d(x,p)+d(p,q)+d(q,y)$. 

Crucially, if $\beta$ has uncountable cofinality, the corresponding generation $G_\beta$ is empty, and every point in the $\beta$-skein $\text{Sk}(\beta)$ belongs to a previous generation. This means that, in such a skein $\text{Sk}(\beta)$, every pair of points $p$ and $q$ such that $d(p,q)\leq 1/2$ belong to a set $\Gamma_\alpha$ where $\alpha$ is strictly smaller than $\beta$, and thus an isometric copy of the threading space $\text{Th}(p,q)$ is contained in $\text{Sk}(\beta)$. Moreover, in this case, the order of any separable subset of the skein space $\text{Sk}(\beta)$ is strictly smaller than $\beta$.

We turn our attention now to the specific skein space $\text{Sk}(\omega_1)$. However, as we mentioned, the results of the rest of the article concerning the $\omega_1$-skein can also be written for any $\beta$-skein where $\beta$ is an ordinal with uncountable cofinality.

Notice that for any two different pairs of different points $(p_1,q_1),(p_2,q_2)\in \text{Sk}(\omega_1)\times \text{Sk}(\omega_1)$ such that $d(p_1,q_1)=d(p_2,q_2)\leq 1/2$, the threading spaces $\text{Th}(p_1,q_1)$ and $\text{Th}(p_2,q_2)$ are contained in $\text{Sk}(\omega_1)$ and are isometric. Moreover, for any $\gamma\in \Delta$, each of these two threading spaces contains an isometric copy of the thread $T_\gamma(1,a)$, where $a=d(p_1,q_1)$. To differentiate the different copies of the same thread in $M$ that arise due to this fact, we will denote by $T_\gamma(p,q)$ the thread $T_\gamma(1,d(p,q))$ contained in the threading space $\text{Th}(p,q)\subset \text{Sk}(\omega_1)$. 

Finally, note also that for a given successor ordinal number $\beta+1$ and any $(p,q)\in \Gamma_{\beta}$ and $\gamma\in \Delta$, it holds that $T_{\gamma}(p,q)\setminus \{p,q\}$ is open in the skein $\text{Sk}(\beta+1)$. Hence, we conclude that any open subset of a thread $T_{\gamma}(p,q)\subset \text{Sk}(\beta+1)$ with $(p,q)\in \Gamma_{\beta}$ and $\gamma\in \Delta$ which does not contain the extreme points $\{p,q\}$ is also open in $\text{Sk}(\beta+1)$.

The skein space $\text{Sk}(\omega_1)$ contains separable subsets with different structures, all of which fail to be Lipschitz retracts of $\text{Sk}(\omega_1)$. We are going to prove some results that let us reduce the kind of separable subsets we have to consider to a smaller class. In particular, first we are going to show that it is enough to prove that separable subsets without isolated points are not Lipschitz retracts. Secondly, we will introduce some concepts and prove some results to deal with points in limit ordinal generations. We structure these two topics in two different subsections:

\subsection{First reduction: subspaces with isolated points}
This first reduction is relatively straightforward to see. It is based on two quick observations about the skein space $\text{Sk}(\omega_1)$ and about threads with small gaps. The first observation is a general fact about threads which we have already stated and proven in remark \ref{threadandseparatedsets}. The second one we present in the following simple lemma:
\begin{lemma}
\label{chaininmainspace}
Let $p,q\in \text{Sk}(\omega_1)$ be two different points. There exists a finite sequence $\{x_k\}_{k=0}^n\subset\text{Sk}(\omega_1)$ with $x_0=p$ and $x_n=q$ such that $d(x_k,x_{k+1})\leq 1/2$ for all $0\leq k\leq n-1$.
\end{lemma}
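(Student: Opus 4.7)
The plan is to show a stronger statement via transfinite induction: for every $r\in\text{Sk}(\omega_1)$ there is a finite chain of jumps of length at most $1/2$ connecting $r$ to the base point $A\in\text{Sk}(0)$. The conclusion of the lemma then follows by concatenating the chain from $p$ to $A$ with the reverse of the chain from $q$ to $A$.

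I would perform the induction on $\text{ord}(r)$, which is a well-defined ordinal less than $\omega_1$ for any $r\in\text{Sk}(\omega_1)$. The base case $\text{ord}(r)=0$ is immediate since $r\in\{A,B\}$ and $d(A,B)=1/2$. For the successor case $\text{ord}(r)=\beta+1$, the point $r$ lies in the generation $G_{\beta+1}$, which by construction consists of interior points of threads $T_\gamma(p',q')\setminus\{p',q'\}$ attached at step $\beta+1$, where $(p',q')\in\Gamma_\beta$ and hence $a:=d(p',q')\leq 1/2$. The key observation is the thread-geometry fact that any point $r$ of such a thread satisfies $\min\{d(r,p'),d(r,q')\}\leq 1/2$: identifying $r$ with its coordinate $x\in[0,1]\cap T_\gamma$, if $x\leq 1/2$ then $d(r,p')\leq x\leq 1/2$, and if $x\geq 1/2$ then $d(r,q')\leq 1-x\leq 1/2$. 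Since $p',q'\in\text{Sk}(\beta)$ have order at most $\beta<\text{ord}(r)$, the inductive hypothesis yields a chain from one of them to $A$, and prepending $r$ gives the required chain.

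The limit case $\text{ord}(r)=\lambda$ needs a separate argument. Because $G_\lambda=\emptyset$ whenever $\lambda$ has uncountable cofinality (as observed earlier in the excerpt), the only way to have $\text{ord}(r)=\lambda$ at a limit ordinal is $\lambda$ of countable cofinality, in which case $\text{Sk}(\lambda)$ is defined as the completion of $\bigcup_{\alpha<\lambda}\text{Sk}(\alpha)$. Hence there exists a sequence $(r_n)_n$ with $r_n\in\text{Sk}(\alpha_n)$, $\alpha_n<\lambda$, and $r_n\to r$. For $n$ large enough, $d(r,r_n)\leq 1/2$, and $\text{ord}(r_n)\leq\alpha_n<\lambda$, so the inductive hypothesis supplies a chain from $r_n$ to $A$; prepend $r$.

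The hardest step is the successor case, but only in the sense of bookkeeping: one must correctly locate a point $r\in G_{\beta+1}$ inside a concrete thread $T_\gamma(p',q')$ of the attachment, and invoke the elementary distance estimate above. Once this is in place, the limit case is routine (relying only on the completion definition), and the conclusion for arbitrary $p,q$ follows by combining the two chains to $A$.
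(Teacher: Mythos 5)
Your proof is correct and takes essentially the same route as the paper: transfinite induction on the order, using that a point in a successor generation lies on a length-one thread and is therefore within $1/2$ of one of its extremes (which has strictly smaller order), and that a point in a limit generation is a limit of points of smaller order. The only cosmetic difference is that you chain each point to the fixed base point $A$ and concatenate, whereas the paper inducts on $\mathrm{ord}(\{p,q\})$ and reduces both endpoints to lower order simultaneously before applying the inductive hypothesis to the new pair.
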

\begin{proof}
We prove the result by transfinite induction on $\text{ord}(\{p,q\})<\omega_1$. If $\text{ord}(\{p,q\})=0$, then $\{p,q\}=\{A,B\}$ and the result follows directly.
Suppose $\text{ord}(\{p,q\})=\beta<\omega_1$, and suppose the result is true for any set of two points with order $\alpha<\beta$. Consider $\beta_p=\text{ord}(p)$. If $\beta_p$ is a limit ordinal, then $p$ is the limit of a sequence in $\bigcup_{\alpha<\beta_p}\text{Sk}(\alpha)$, and in particular we can choose $x_p$ with $\text{ord}(x_p)<\beta_p$ such that $d(x_p,p)\leq 1/2$. If $\beta_p=\lambda+1$ for a countable ordinal $\lambda$, then by construction of $\text{Sk}(\omega_1)$ we have that $p$ belongs to the threading space $\text{Th}(x_p,y_p)$ for some $x_p,y_p\in \text{Sk}(\lambda)$. Since $p$ is in a thread of length $1$ with extremes $x_p,y_p$, the distance from $p$ to one of these extremes is less than or equal to $1/2$. Assume without loss of generality that $d(x_p,p)\leq 1/2$. We conclude that in any case there exists $x_p$ with $\text{ord}(x_p)<\text{ord}(p)$ such that $d(x_p,p)\leq 1/2$, and arguing in the same way there exists $x_q$ with $\text{ord}(x_q)<\text{ord}(q)$ such that $d(x_q,q)\leq 1/2$. 

The points $x_p,x_q$ verify that $\text{ord}(\{x_p,x_q\})<\beta$, so by inductive hypothesis there exists a sequence $\{x_k\}_{k=}^n\subset M$ with $x_0=x_p$ and $x_n=x_q$ such that $d(x_k,x_{k+1})\leq 1/2$ for all $0\leq k\leq n$. The result follows now adding the points $p$ and $q$ at the beginning and at the end of the sequence respectively.
\end{proof}
Let us mention that this previous lemma can be improved so that the distance between the points in the sequence is less than $1/4$, since this is the biggest possible gap in the threads we are considering. However, we do not consider this improvement to be relevant enough and prefer to prove it with a simpler and shorter argument, since we will only need to use the lemma as it is stated now. 

Now we can prove the first reduction result:
\begin{proposition}
\label{firstreduction}
Let $S$ be a closed subset of $ \text{Sk}(\omega_1)$ with at least two different points. If there exists $p\in S$ such that $p$ is isolated in $S$, then $S$ is not a Lipschitz retract of $ \text{Sk}(\omega_1)$.
\end{proposition}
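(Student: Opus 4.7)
The plan is a contradiction argument that reduces to the separated-sets criterion in Remark \ref{threadandseparatedsets}. Assume $R\colon\text{Sk}(\omega_1)\to S$ is a $K$-Lipschitz retraction for some $K\geq 1$. Since $p$ is isolated in $S$, there exists $\varepsilon>0$ with $B(p,\varepsilon)\cap S=\{p\}$, so $S_1=\{p\}$ and $S_2=S\setminus\{p\}$ are disjoint closed subsets of $\text{Sk}(\omega_1)$ with $d(S_1,S_2)\geq\varepsilon$. The goal is to exhibit a thread $T\subset\text{Sk}(\omega_1)$ with all gaps strictly shorter than $\varepsilon/K$ such that the restriction $R|_T\colon T\to S_1\cup S_2$ maps one extreme of $T$ into $S_1$ and takes at least one value in $S_2$, which is exactly the configuration Remark \ref{threadandseparatedsets} forbids.

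To locate $T$, I would first fix any $q\in S\setminus\{p\}$ and apply Lemma \ref{chaininmainspace} to produce a finite chain $x_0=p,\,x_1,\ldots,x_n=q$ in $\text{Sk}(\omega_1)$ with $d(x_k,x_{k+1})\leq 1/2$ for every $k$. Because $R$ is a retraction, $R(x_0)=p\in S_1$ and $R(x_n)=q\in S_2$, so there is a minimal index $k^*$ with $R(x_{k^*-1})=p$ and $R(x_{k^*})\in S_2$; in particular $x_{k^*-1}\neq x_{k^*}$ and $a:=d(x_{k^*-1},x_{k^*})\in(0,1/2]$. A brief inspection of the inductive construction shows that for any two distinct points of $\text{Sk}(\omega_1)$ at mutual distance at most $1/2$, the pair lies in some $\Gamma_\lambda$ (taking $\lambda$ to be the larger of the two skein orders, one of the points is in $G_\lambda$ while the other lives in $\text{Sk}(\lambda)$), so the threading space $\text{Th}(x_{k^*-1},x_{k^*})$ is attached at stage $\lambda+1$ and sits isometrically inside $\text{Sk}(\omega_1)$.

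Inside $\text{Th}(x_{k^*-1},x_{k^*})$ we have the thread $T_\gamma(x_{k^*-1},x_{k^*})$ for every $\gamma\in\Delta$. I would then pick $\gamma\in\Delta$ with $\gamma_1<\min\{2^{-2},\,1-q_1,\,\varepsilon/(2K)\}$ and $\gamma_{i+1}<\gamma_i$ (for instance, $\gamma_i=\gamma_1 2^{-(i-1)}$), so by Proposition \ref{propertieslgamma} the sequence $\{G^\gamma_i\}_i$ of gaps of $T:=T_\gamma(x_{k^*-1},x_{k^*})$ has lengths $\gamma_i<\varepsilon/K$; note that the thread distance of each such gap coincides with its Euclidean length because $\gamma_i<1/4<1-\gamma_i+a$. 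The restriction $R|_T$ is then a $K$-Lipschitz map $T\to S_1\cup S_2$ that sends the extreme $x_{k^*-1}$ to $p\in S_1$ and the extreme $x_{k^*}$ into $S_2$, contradicting Remark \ref{threadandseparatedsets}.

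The only delicate step is verifying that the threading space $\text{Th}(x_{k^*-1},x_{k^*})$ actually sits in $\text{Sk}(\omega_1)$, which requires parsing the transfinite construction carefully, especially at pairs where both points have limit-ordinal skein orders; once this is in place, the rest is a clean combination of the isolation bound, the chain lemma, and the gap-length criterion.
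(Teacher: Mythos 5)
Your proposal is correct and follows essentially the same route as the paper: reduce to the separated-sets criterion of Remark \ref{threadandseparatedsets} via the chain from Lemma \ref{chaininmainspace} and a thread $T_\gamma(x_{k^*-1},x_{k^*})$ with gaps shorter than $\varepsilon/K$, with the existence of the threading space guaranteed exactly as you describe (the pair lies in $\Gamma_\lambda$ for $\lambda$ the larger of the two orders, even when both orders are limit ordinals). The only cosmetic difference is that you locate the transition by testing the chain endpoints (minimal $k^*$ with $R(x_{k^*})\in S_2$), while the paper takes the minimal $k$ with $R(T^*_k)\nsubseteq\{p\}$ over whole threads; both yield the same contradiction.
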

\begin{proof}
Put $\varepsilon=d\big(p,S\setminus\{p\}\big)$, which is positive since $p$ is isolated in $S$. Suppose there exists a Lipschitz retraction $R\colon  \text{Sk}(\omega_1)\rightarrow S$, and put $K=\|R\|_\text{Lip}$.

Consider any point $q\in S$ different from $p$. By Lemma \ref{chaininmainspace} there exists a finite sequence $\{x_k\}_{k\in\mathbb{N}}\subset \text{Sk}(\omega_1)$ such that $x_0=p$ and $x_n=q$, and such that $d(x_k,x_{k+1})\leq 1/2$ for all $0\leq k\leq n-1$. By construction of the skein space $\text{Sk}(\omega_1)$, there exists an isometric copy of the threading space $\text{Th}(x_k,x_{k+1})$ in $\text{Sk}(\omega_1)$, so we may assume that these threading spaces are contained in $\text{Sk}(\omega_1)$. For every $0\leq k\leq n-1$, the threading space $\text{Th}(x_k,x_{k+1})$ itself contains the threads $T_{\gamma}(x_k,x_{k+1})$ for every $\gamma\in \Delta$. Choose $\gamma^*=(\gamma_i^*)_{i\in\mathbb{N}}\in \Delta$ such that $\gamma^*_i<\varepsilon/K$ for every $i\in\mathbb{N}$, and write $T^*_k$ to denote the thread $T_{\gamma^*}(x_k,x_{k+1})$ contained in the threading space $\text{Th}(x_k,x_{k+1})$ with extremes $x_k$ and $x_{k+1}$ for every $0\leq k\leq n-1$. 

Define
$$ k_0=\min\big\{k\in\{0,\dots,n-1\}\colon R\big(T^*_k\big)\nsubseteq \{p\}\big\},$$
which exists since $q\in T^*_{n-1}$ and $R(q)=q\in S\setminus\{p\}$. By definition of $k_0$, there exists a point $y_0\in T^*_{k_0}$ such that $R(y_0)\in S\setminus \{p\}$. If $k_0=0$, the point $y_0$ cannot be the lower extreme $x_0=p$ of the thread $T^*_{0}$, since $R(p)=p$. If $k_0\neq 0$, again we have that $y_0$ cannot be $x_{k_0}$ because $x_{k_0}$ is also in the previous thread $T^*_{k_0-1}$ as its higher extreme point, which would contradict the minimality of $k_0$. We conclude then that $R(x_{k_0})=p$. Since the gaps of $T^*_{k_0}$ are given by the sequence $\gamma^*$, they are all smaller than $\varepsilon/K$. We can then apply Remark \ref{threadandseparatedsets} to reach a contradiction with the existence of the retraction $R$.
\end{proof}

\subsection{Second reduction: points in limit ordinal generations}

In the construction of the skein space $\text{Sk}(\omega_1)$, we have a better understanding of the points belonging to successor ordinal generations than we do of points in limit ordinal generations. Indeed, for a point $p$ of order $\alpha+1$ we know that there exist two points $x$ and $y$, with at least one of them in generation $\alpha$ such that $p$ belongs to a thread $T_\gamma$ for a sequence $\gamma\in\Delta$ with extreme points $x$ and $y$. However, a point in a limit ordinal generation can initially only be described as a limit of a sequence of points in previous generations, and it does not belong to a thread or to any other defined structure. This subsection is dedicated to finding ways to describe these limit points in order to compensate for the comparatively low \emph{a priori} knowledge we have of them. 

For a closed subset $S$ of a metric space $M$ and $r>0$, define the \emph{open ball around $S$ of radius $r$}, denoted by $B(S,r)^\circ$ as the set
$$ B(S,r)^\circ=\{p\in M\colon d(p,S)< r\}.$$
The main result of this subsection is the following:

\begin{proposition}
\label{betaSkeinisRetractionofBall18}
In the skein space $Sk(\omega_1)$, for every ordinal number $\beta<\omega_1$, the $\beta$-skein $\text{Sk}(\beta)$ is a $1$-Lipschitz retraction of the ball $B(\text{Sk}(\beta),1/8)^\circ$.
\end{proposition}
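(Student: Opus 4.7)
The plan is to construct the retraction by transfinite induction on $\alpha \in [\beta, \omega_1]$, building a compatible family of $1$-Lipschitz retractions $R_\alpha \colon B(\text{Sk}(\beta), 1/8)^\circ \cap \text{Sk}(\alpha) \to \text{Sk}(\beta)$ with $R_\beta = \text{id}_{\text{Sk}(\beta)}$. At a limit ordinal $\alpha$, the union of the previously defined $R_{\alpha'}$ is $1$-Lipschitz on a dense subset of the target domain (since $\text{Sk}(\alpha)$ is the completion of $\bigcup_{\alpha' < \alpha}\text{Sk}(\alpha')$) and extends uniquely by uniform continuity; the extension still fixes $\text{Sk}(\beta)$ because $\text{Sk}(\beta)$ sits inside the original domain. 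At the successor step $\alpha + 1$, the new points lie in thread interiors $T_\gamma(u, v) \setminus \{u, v\}$ for $(u, v) \in \Gamma_\alpha$, and for such a ball point $x$ I would set $R_{\alpha+1}(x) := R_\alpha(w(x))$, where $w(x) \in \{u, v\}$ is the anchor closer to $x$ in the thread metric, with any consistent tie-breaking rule.

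A preliminary structural fact, which must be established before any $1$-Lipschitz check, is that the closer anchor is automatically in $B(\text{Sk}(\beta), 1/8)^\circ$, so that $R_\alpha(w(x))$ is indeed defined. Suppose on the contrary that $w(x) = u$ with $d(u, \text{Sk}(\beta)) \ge 1/8$. Since $x$'s path to $\text{Sk}(\beta)$ through $u$ already has length at least $1/8$, the condition $x \in B(\text{Sk}(\beta), 1/8)^\circ$ forces $d(x, v) + d(v, \text{Sk}(\beta)) < 1/8$. Combining this with $d(u, \text{Sk}(\beta)) \le a + d(v, \text{Sk}(\beta))$, where $a = d(u, v) \le 1/2$, yields $a > d(x, v)$, contradicting the thread-metric inequality $d(x, v) \ge a$ that holds for any $x$ on the $u$-half of the thread (i.e.\ satisfying $d(x, u) \le d(x, v)$) whenever $a \le 1/2$.

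To verify $1$-Lipschitz, it suffices by the inductive property of $R_\alpha$ to show $d(w(x_1), w(x_2)) \le d(x_1, x_2)$ in the $\text{Sk}(\alpha)$-metric, with the convention $w(x_i) = x_i$ when $x_i \in \text{Sk}(\alpha)$. The representative mixed case takes $x_1 \in \text{Sk}(\alpha)$ and $x_2 \in T_\gamma(u_2, v_2) \setminus \{u_2, v_2\}$ with $w(x_2) = u_2$: decomposing $d(x_1, x_2) = \min(d(x_1, u_2) + d(u_2, x_2),\ d(x_1, v_2) + d(v_2, x_2))$, the first branch dominates $d(x_1, u_2)$ trivially, and in the second branch the triangle inequality $d(x_1, u_2) \le d(x_1, v_2) + a_2$ combined with the width bound $d(v_2, x_2) \ge a_2$ (valid because $w(x_2) = u_2$ places $x_2$ on the $u_2$-half of its thread) yields $d(x_1, u_2) \le d(x_1, v_2) + d(v_2, x_2) = d(x_1, x_2)$. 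The fully-new case, where both $x_i$ lie in new thread interiors, follows the same template, reducing to triangle inequalities along chains of two to four anchors with all slack absorbed by the width bounds $d(x_i, \text{non-chosen anchor}) \ge a_i$.

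The main obstacle I anticipate is the bookkeeping for the cross-threading-space sub-case, where any of the four $(w_1', w_2')$ pairs of anchors may realize $d(x_1, x_2)$, each requiring its own instance of the width-based cancellation. The specific radius $1/8$ is used only in the preliminary structural fact and works precisely because twice this radius, $1/4$, is strictly less than $1/2$, the maximum possible thread width: this slack is exactly what lets the contradiction argument close, and therefore what lets the induction propagate through all $\omega_1$ steps.
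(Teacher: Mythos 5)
Your overall architecture---transfinite induction on the skein level, with the successor step given by ``send each new point to its nearer anchor'' and the limit step handled by density plus uniform continuity---is sound, and it is essentially the same mechanism as the paper's: the paper proves the stronger ``ultrastability'' statement (the exact additive decomposition $d(p,q)=d\big(p,P_\beta(p)\big)+d\big(P_\beta(p),P_\beta(q)\big)+d\big(P_\beta(q),q\big)$, Proposition 4.2) by induction on $\mathrm{ord}\{p,q\}$, and your iterated nearest-anchor map is the same retraction assembled level by level. Your preliminary structural fact is correct as argued, and so is the mixed case.

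The gap is in the fully-new case, and specifically in the sub-case you do not single out: $x_1$ and $x_2$ lying in the interior of the \emph{same} thread $T_\gamma(u,v)$ with $w(x_1)=u$ and $w(x_2)=v$. Here $d(x_1,x_2)$ may be realized by the direct within-thread branch $|t_2-t_1|$, which is not a chain through anchors, and the width bounds $d(x_1,v)\ge a$, $d(x_2,u)\ge a$ do \emph{not} imply $d(x_1,x_2)\ge a=d(u,v)$: take $a=1/2$ and positions $t_1=0.4$, $t_2=0.6$; then $w(x_1)=u$, $w(x_2)=v$, both width bounds hold with value $0.6$, yet $d(x_1,x_2)=0.2<0.5=d\big(w(x_1),w(x_2)\big)$. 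The estimate is rescued only by the ball constraint: $d\big(x_i,w(x_i)\big)\le d\big(x_i,\mathrm{Sk}(\beta)\big)<1/8$ forces $t_1<1/8$ and $t_2>7/8$, hence $t_2-t_1>3/4\ge 1/2\ge a$; this is exactly the computation ``$<1/8+1/2+1/8=3/4<|q-p|$'' in the paper's base case. Consequently your closing diagnosis is inverted: the preliminary structural fact goes through for an \emph{arbitrary} radius (your own argument for it never uses the value $1/8$), whereas it is the Lipschitz verification that genuinely needs the radius $r$ to satisfy $2r\le 1-a$ for every admissible width $a\le 1/2$, i.e.\ $r\le 1/4$. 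Once this sub-case is patched, the induction closes; note, however, that the paper's stronger additive formula is reused elsewhere (e.g.\ in Lemma 4.5 and in the limit-ordinal arguments of the main proof), so the bare retraction statement you obtain would not substitute for Proposition 4.2 in those later steps.
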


In fact, as we are going to see, a stronger result is verified, which is helpful in the inductive argument we use to prove it.

Let us introduce some useful concepts: For an ordinal number $\beta<\omega_1$ and a point $p\in \text{Sk}(\omega_1)$, we may consider the set $P_\beta(p)=\big\{x\in \text{Sk}(\beta)\colon d(p,x)=d\big(p,\text{Sk}(\beta)\big)\big\}$. Since the $\beta$-skein $\text{Sk}(\beta)$ is not compact when $\beta>0$, we cannot easily ensure that $P_\beta(p)$ is nonempty in every case. In the case where $P_\beta(p)$ is nonempty for a point $p\in \text{Sk}(\omega_1)$ and an ordinal $\beta<\omega_1$, the members of $P_\beta(p)$ will be called the \emph{ancestors of $p$ of order $\beta$}.

If a point $p\in \text{Sk}(\omega_1)$ has order $\beta+1$ for some ordinal $\beta<\omega_1$, then it belongs to a threading space $\text{Th}(x,y)$ for a pair of points $(x,y)$ with $\text{ord}\{x,y\}=\beta$, and it is straightforward to see that the set of ancestors of $p$ of order $\beta$ is nonempty and is contained in $\{x,y\}$. Since every thread in $\text{Th}(x,y)$ has length $1$, if $d(p,\text{Sk}(\beta))<1/2$, then $P_\beta(p)$ is unique and is equal to either $x$ or $y$. The other point will be called the \emph{pseudo-ancestor of $p$ of order $\beta$}, and will be denoted by $Q_\beta(p)$. In this way, every point $p$ in a successor ordinal generation $G_{\beta+1}$ such that the distance $d(p,\text{Sk}(\beta))$ is smaller than $1/2$ will belong to the threading space $\text{Th}(P_\beta(p),Q_\beta(p))$. Notice that this concept is only defined for points in successor ordinal generations and with respect to the preceding ordinal.

For each ordinal number $\beta<\omega_1$, we say that a subset $S$ of $\text{Sk}(\omega_1)$ containing $\text{Sk}(\beta)$ is $\beta$-stable if for every point $p\in S$ there exists an ancestor $P_\beta(p)$ and it is unique, and moreover, the resulting well defined map $P_\beta\colon S\rightarrow M$ is a $1$-Lipschitz retraction. Hence, the main result of this subsection will be proven if we show that $B(\text{Sk}(\beta),1/8)^\circ$ is $\beta$-stable for all $\beta<\omega_1$. 

We prove the following even stronger result:

\begin{proposition}
\label{ultrastability}
Let $\beta<\omega_1$ be an ordinal number. If two points $p$ and $q$ belong to the ball $B(\text{Sk}(\beta),1/8)^\circ$, then the ancestors $P_\beta(p)$ and $P_\beta(q)$ exist and are unique. 
Moreover, if $P_\beta(p)\neq P_\beta(q)$, then $d(p,q)=d\big(p,P_\beta(p)\big)+d\big(P_\beta(p),P_\beta(q)\big)+d\big(P_\beta(q),q\big)$. 

In particular, the ball $B(\text{Sk}(\beta),1/8)^\circ$ is $\beta$-stable.
\end{proposition}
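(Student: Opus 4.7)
The plan is to prove the claim by transfinite induction on an auxiliary parameter $\mu \geq \beta$, establishing for all $p, q \in \text{Sk}(\mu) \cap B(\text{Sk}(\beta),1/8)^\circ$ both the existence and uniqueness of $P_\beta(p), P_\beta(q)$ and the distance decomposition. Since every point of $\text{Sk}(\omega_1)$ has countable order, this will suffice. The base case $\mu = \beta$ is immediate, because $P_\beta$ then restricts to the identity on $\text{Sk}(\beta)$. No outer induction on $\beta$ is required: the inductive step only invokes the same $\beta$ at strictly smaller values of $\mu$.

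For the successor step $\mu = \nu + 1$, a point $p \in G_{\nu+1}$ lies inside a threading space $\text{Th}(x,y)$ with $x, y \in \text{Sk}(\nu)$ and $a := d(x,y) \leq 1/2$, and the attachment formula gives $d(p,z) = \min\{d(p,x) + d(x,z),\; d(p,y) + d(y,z)\}$ for all $z \in \text{Sk}(\nu)$. Taking the infimum over $z \in \text{Sk}(\beta)$, we obtain $d(p, \text{Sk}(\beta)) = \min\{d(p,x) + d(x,\text{Sk}(\beta)),\; d(p,y) + d(y,\text{Sk}(\beta))\}$. Label so that $x$ is the anchor closer to $p$ inside the thread; since $d(p,\text{Sk}(\beta)) < 1/8$, the explicit thread metric gives $d(p,x) < 1/8 < (1-a)/2$ and hence $d(p,y) = d(p,x) + a$, while the triangle inequality $d(x,\text{Sk}(\beta)) \leq a + d(y,\text{Sk}(\beta))$ forces the minimum above to be realized by the $x$-term (with equality forcing $P_\beta(x) = P_\beta(y)$, after invoking the IH for $y$). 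In particular $x \in B(\text{Sk}(\beta), 1/8)^\circ$, the IH provides a unique $P_\beta(x)$, and we set $P_\beta(p) := P_\beta(x)$; any competing ancestor $z \in \text{Sk}(\beta)$ must satisfy $d(x,z) = d(x,\text{Sk}(\beta))$, hence $z = P_\beta(x)$ by IH uniqueness.

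The distance decomposition at the successor step is the main technical obstacle. In the hardest subcase, $p$ and $q$ lie in distinct threading spaces $\text{Th}(x_1, y_1)$ and $\text{Th}(x_2, y_2)$, and the attachment formula expresses $d(p,q)$ as $\min_{s_i \in \{x_i, y_i\}}\{d(p,s_1) + d(s_1,s_2) + d(s_2,q)\}$, a minimum over four terms. Labeling the $x_i$ as the anchors realizing $P_\beta(p)$ and $P_\beta(q)$ respectively, and using the thread identities $d(p, y_1) = d(p, x_1) + a_1$ and $d(q, y_2) = d(q, x_2) + a_2$ together with the triangle estimates $d(x_1, y_2) \geq d(x_1, x_2) - a_2$, $d(y_1, x_2) \geq d(x_1, x_2) - a_1$, and $d(y_1, y_2) \geq d(x_1, x_2) - a_1 - a_2$, one checks that the minimum is attained at $T_1 := d(p, x_1) + d(x_1, x_2) + d(x_2, q)$. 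Since $x_1, x_2 \in \text{Sk}(\nu) \cap B(\text{Sk}(\beta), 1/8)^\circ$ and $P_\beta(x_1) \neq P_\beta(x_2)$, the IH expands $d(x_1, x_2) = d(x_1, P_\beta(x_1)) + d(P_\beta(x_1), P_\beta(x_2)) + d(P_\beta(x_2), x_2)$; collecting terms yields the desired identity. The remaining subcases (shared threading space, or exactly one point in $\text{Sk}(\nu)$) are dispatched by the same method with the corresponding simpler attachment formula.

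For the limit step, $\mu$ is a countable limit ordinal, so any $p \in \text{Sk}(\mu) \setminus \bigcup_{\alpha < \mu} \text{Sk}(\alpha)$ is the limit of a Cauchy sequence $(p_n)$ with $\text{ord}(p_n) < \mu$; by continuity of $d(\cdot,\text{Sk}(\beta))$, eventually $p_n \in B(\text{Sk}(\beta), 1/8)^\circ$, so the IH supplies $P_\beta(p_n)$. The IH distance decomposition yields $d(P_\beta(p_m), P_\beta(p_n)) \leq d(p_m, p_n)$, so $(P_\beta(p_n))$ is Cauchy in the complete space $\text{Sk}(\beta)$ and converges to some $p^*$; setting $P_\beta(p) := p^*$, the identity $d(p, p^*) = \lim d(p_n, P_\beta(p_n)) = d(p, \text{Sk}(\beta))$ confirms that $p^*$ is an ancestor. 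For uniqueness, any other ancestor $z$ satisfies $d(p_n, z) - d(p_n, P_\beta(p_n)) \to 0$, and the IH decomposition applied to $(p_n, z)$ forces $d(z, P_\beta(p_n)) \to 0$, hence $z = p^*$. The distance decomposition for $p, q$ passes to the limit directly. Thus the main difficulty remains the successor-step formula, where the four-term minimum over anchor choices must be controlled via the explicit thread metric; once this is secured, both the Cauchy argument and the final $\beta$-stability assertion become routine.
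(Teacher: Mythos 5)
Your proof is correct and follows essentially the same strategy as the paper's: transfinite induction on the stage containing the pair, a successor step that factors both the ancestor and the distance through the anchors of the ambient threading space (the paper phrases this via its ``boundness'' observation and the commutation $P_\beta\circ P_\eta=P_\beta$, you via the explicit four-term minimum in the attachment metric --- these are equivalent), and a Cauchy-sequence argument at limit stages. The only subcase your ``same method'' remark does not literally cover is when $p$ and $q$ lie in the same thread, where one must additionally compare the direct in-thread distance (which exceeds $1-1/8-1/8=3/4$) with the path through the anchors (which is less than $1/8+1/2+1/8=3/4$), exactly as the paper does in its base case.
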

\begin{proof}
Put $\alpha=\text{ord}\{p,q\}$. We are going to prove the result by induction on $\alpha$. If $\alpha$ is smaller than $\beta$, then both $p$ and $q$ belong to the skein $\text{Sk}(\beta)$ and the result follows trivially. Hence, we will start the induction assuming $\alpha=\beta+1$. Let us divide the proof into three parts: the base case, the successor ordinal case, and the limit ordinal case. The base case is in fact the most technical part of the proof:

\vspace{2mm}
\textbf{1.- The base case: $\alpha=\beta+1$}
\vspace{2mm}

Suppose that $\alpha=\beta+1$. Since $\text{ord}\{p,q\}=\beta+1$, at least one of $p$ and $q$ is in generation $G_{\beta+1}$. Assume without loss of generality that $p$ belongs to generation $G_{\beta+1}$. As we discussed earlier, since the distance from $p$ to $\text{Sk}(\beta)$ is less than $1/8$ and in particular less than $1/2$, we have that the ancestor of order $\beta$ of $p$, $P_\beta(p)$, exists and is unique, and $p$ is in the threading space anchored at its ancestor and pseudo-ancestor of order $\beta$, denoted by $\text{Th}\big(P_\beta(p),Q_\beta(p)\big)$. Moreover, since $d(p,P_\beta(p))<1/4$, we have that $p$ is bound to $P_\beta(p)$ in $\text{Sk}(\omega_1)\setminus \text{Sk}(\beta)$ (we briefly discussed this when introducing the concept of boundness).
In other words, we have that the distance from $p$ to any point $x\in \text{Sk}(\beta)$ is computed by
\begin{equation}
\label{eq1basecase}
    d(p,x)=d\big(p,P_\beta(p)\big)+d\big(P_\beta(p),x\big)\qquad\text{for every }x\in \text{Sk}(\beta).
\end{equation}

Now, if the point $q$ is in the skein $\text{Sk}(\beta)$, then $q$ is its own ancestor of order $\beta$, and the result follows directly by the previous identity. Suppose then that $q$ is also in generation $G_{\beta+1}$. By the same discussion as above, $q$ belongs to the threading space $\text{Th}\big(P_\beta(q),Q_\beta(q)\big)$, and $q$ verifies the corresponding identity to \eqref{eq1basecase}. There are two possibilities: either both threading spaces $\text{Th}\big(P_\beta(p),Q_\beta(p)\big)$ and $\text{Th}\big(P_\beta(q),Q_\beta(q)\big)$ are the same, or $p$ and $q$ belong to different threading spaces. 

If $p$ and $q$ belong to two different threading spaces, then the result follows from equation \eqref{eq1basecase} (applied to both $p$ and $q$) and the construction of the skein $\text{Sk}(\beta+1)$. Otherwise, if both threading spaces $\text{Th}\big(P_\beta(p),Q_\beta(p)\big)$ and $\text{Th}\big(P_\beta(q),Q_\beta(q)\big)$ are the same, we may assume that the pseudo-ancestor of $p$, $Q_\beta(p)$, and the ancestor of $q$, $P_\beta(q)$, are the same point (otherwise $P_\beta(p)=P_\beta(q)$ and there is nothing left to prove). Now, on the one hand we have that $d\big(p,P_\beta(p)\big)<1/8$ and $d\big(q,P_\beta(q)\big)<1/8$ by hypothesis; and on the other hand the width of the threading spaces in the skein spaces we defined is less than $1/2$, so $d\big(P_\beta(p),P_\beta(q)\big)\leq 1/2$. Hence, we necessarily have that 
$$d\big(p,P_\beta(p)\big)+d\big(P_\beta(p),P_\beta(q)\big)+d\big(P_\beta(q),q\big)<3/4<|q-p|, $$
from which the result follows, whether $p$ and $q$ belong to the same thread in the threading space $\text{Th}\big(P_\beta(p),P_\beta(q)\big)$ or not.

\vspace{2mm}
\textbf{2.- The successor ordinal case: $\alpha=\eta+1$ for $\eta>\beta$}
\vspace{2mm}

Suppose now that $\alpha=\eta+1$ for some countable ordinal $\eta>\beta$, and that the result holds for every pair of points of order strictly less than $\eta+1$. Let us prove first that both ancestors $P_\beta(p)$ and $P_\beta(q)$ exist and are unique and that the ancestor operation commutes for $p$ and $q$ at order $\eta$, that is: $P_\beta\big(P_\eta(p)\big)=P_\beta(p)$ and $P_\beta\big(P_\eta(q)\big)=P_\beta(q)$. Since the argument is exactly the same for both points, we will only prove it for $p$, and again we may assume without loss of generality that $p$ is in the generation $G_{\eta+1}$. Since the distance from $p$ to $\text{Sk}(\beta)$ is less than $1/8$, we have as well that $d\big(p,\text{Sk}(\eta)\big)<1/8$ since $\text{Sk}(\beta)\subset \text{Sk}(\eta)$. Therefore, by the first step of the induction process we have that the ancestor of $p$ of order $\eta$ is unique and 
\begin{equation}
\label{eq1succord}
d(p,x)=d\big(p,P_\eta(p)\big)+d\big(P_\eta(p),x\big),\qquad\text{for all } x\in \text{Sk}(\eta).
\end{equation}
Moreover, now $P_\eta(p)\in \text{Sk}(\eta)$, and with the previous equation we can deduce that $P_\eta(p)$ belongs to the ball $B(\text{Sk}(\beta),1/8)^\circ$ as well, so by induction again we have that $P_\beta\big(P_\eta(p)\big)$ is unique, and $d\big(P_\eta(p),x\big)=d\big(P_\eta(p),P_\beta\big(P_\eta(p)\big)\big)+d\big(P_\beta\big(P_\eta(p)\big),x\big)$. These two identities result in the following equation:
\begin{equation*}
    d(p,x)=d\big(p,P_\eta(p)\big)+d\big(P_\eta(p),P_\beta\big(P_\eta(p)\big)\big)+d\big(P_\beta\big(P_\eta(p)\big),x\big).
\end{equation*}

Applying equation \eqref{eq1succord} for $P_\beta\big(P_\eta(p)\big)\in \text{Sk}(\beta)$ we can put the first two terms of the right hand side in the previous equation as $d\big(p,P_\eta(p)\big)+d\big(P_\eta(p),P_\beta\big(P_\eta(p)\big)\big)=d\big(p,P_\beta\big(P_\eta(p)\big)\big)$, and finally obtain:
\begin{equation}
    \label{eq2succord}
    d(p,x)=d\big(p,P_\beta\big(P_\eta(p)\big)\big)+d\big(P_\beta\big(P_\eta(p)\big),x\big),\qquad\text{for all } x\in \text{Sk}(\beta).
\end{equation}
Now, from equation \eqref{eq2succord} it is easy to prove that $P_\beta(p)$ is unique and $P_\beta\big(P_\eta(p)\big)=P_\beta(p)$.

To finish with this case, suppose that $P_\beta(p)\neq P_\beta(q)$. Then $P_\eta(p)\neq P_\eta(q)$ by what we just proven. We can now apply the inductive hypothesis several times and deduce that:
\begin{align*}
    d(p,q)&=d\big(p,P_\eta(p)\big)+d\big(P_\eta(p),P_\eta(q)\big)+d\big(P_\eta(q),q\big)\\
    &= d\big(p,P_\eta(p)\big)+d\big(P_\eta(p),P_\beta(p)\big)+\big(P_\beta(p),P_\beta(q)\big)+\big(P_\beta(q),P_\eta(q)\big)+\big(P_\eta(q),q\big)\\
    &=d\big(p,P_\beta(p)\big)+\big(P_\beta(p),P_\beta(q)\big)+\big(P_\beta(q),q\big).
\end{align*}
This finishes the successor ordinal case.

\vspace{2mm}
\textbf{3.- The limit ordinal case}
\vspace{2mm}

Suppose finally that $\alpha$ is a limit ordinal. As in the previous case, we start by proving that $P_\beta(p)$ and $P_\beta(q)$ exist and are unique. Similarly, we assume that $\text{ord}(p)=\alpha$, and we only prove it for $p$. Consider a sequence $\{p_n\}_{n\in\mathbb{N}}$ of points in $\text{Sk}(\alpha)$ convergent to $p$ and such that $\text{ord}(p_n)<\alpha$ for all $n\in\mathbb{N}$. Moreover, since the ball $B(\text{Sk}(\beta),1/8)^\circ$ is an open set of $\text{Sk}(\omega_1)$ which contains $p$, we may suppose that $d\big(p_n,\text{Sk}(\beta)\big)<1/8$ as well for all $n\in\mathbb{N}$. Therefore, by inductive hypothesis, $P_\beta(p_n)$ is unique for all $n\in\mathbb{N}$, and 
\begin{equation}
\label{eq1limitord}
    d(p_n,x)=d\big(p_n,P_\beta(p_n)\big)+d\big(P_\beta(p_n),x\big),\quad\text{for all }x\in \text{Sk}(\beta)\text{ and all }n\in\mathbb{N}.
\end{equation}
We are going to prove first that the sequence $\{P_\beta(p_n)\}_{n\in\mathbb{N}}$ is convergent. Indeed, since $\text{ord}(p_n)<\alpha$ for all $x\in\mathbb{N}$, for all $n,m\in \mathbb{N}$ such that $P_\beta(p_n)\neq P_\beta(x_m)$, we have that $d(p_n,p_m)=d\big(p_n,P_\beta(p_n)\big)+d\big(P_\beta(p_n),P_\beta(p_m)\big)+d\big(P_\beta(p_m),p_m\big)$. In particular, $d\big(P_\beta(p_n),P_\beta(p_m)\big)\leq d(p_n,p_m)$ for all $n,m\in\mathbb{N}$, whether $P_\beta(p_n)\neq P_\beta(x_m)$ or not. Since the sequence $\{p_n\}_{n\in\mathbb{N}}$ converges, it is a Cauchy sequence, which implies that the sequence $\{P_\beta(p_n)\}_{n\in\mathbb{N}}$ is a Cauchy sequence as well, and thus convergent in the complete metric space $\text{Sk}(\beta)$. Denote the limit of this sequence by $P^*$, which belongs to the set $\text{Sk}(\beta)$. 

Taking the limit when $n$ tends to infinity in equation \eqref{eq1limitord}, we obtain that 
$$ d(p,x)=d\big(p,P^*\big)+d\big(P^*,x\big),\qquad\text{for all } x\in \text{Sk}(\beta).$$
Similarly to the successor ordinal case, from this equation it follows that $P_\beta(p)=P^*$ and it is unique. 

Now, suppose that $P_\beta(p)\neq P_\beta(q)$, and consider two sequences $\{p_n\}_{n\in\mathbb{N}}$ and $\{q_n\}_{n\in\mathbb{N}}$ in $B(\text{Sk}(\beta),1/8)^\circ$ converging to $p$ and $q$ respectively, and such that $\text{ord}\{p_n,q_n\}<\alpha$. By the previous argument, we have that:
\begin{align*}
    d(p,q)&=\lim_{n}d(p_n,q_n)=\lim_{n}\big(d\big(p_n,P_\beta(p_n)\big)+d\big(P_\beta(p_n),P_\beta(q_n)\big)+d\big(P_\beta(q_n),q_n\big)\\
    &=d\big(p,P_\beta(p)\big)+d\big(P_\beta(p),P_\beta(q)\big)+d\big(P_\beta(q),q\big),
\end{align*}
which concludes the proof.
\end{proof}

\begin{figure}
\includegraphics[width=\textwidth]{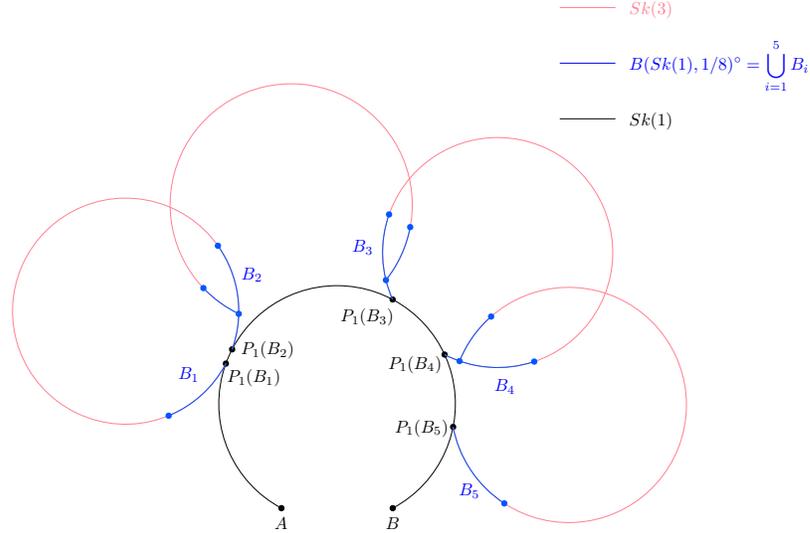}
\caption{The skein $\text{Sk}(1)$ is a $1$-Lipschitz retract of the ball $B(\text{Sk}(1),1/8)$ with the ancestor map.}
\label{UltrastabilityFigure}
\end{figure}

In Figure \ref{UltrastabilityFigure} we observe conceptually Proposition \ref{ultrastability}. In this diagram we portray again a subset of the skein $\text{Sk}(3)$, and the ball $B(\text{Sk}(1),1/8)^\circ$ (colored in blue) is partitioned into 5 subsets $\{B_i\}_{i=1}^5$ such that every point in the same $B_i$ has the same ancestor of order $1$. The ancestor map clearly defines in this case a $1$-Lipschitz retraction onto $\text{Sk}(1)$.

Finally, with this proposition, the second reduction result follows directly:
\begin{proof}[Proof of \ref{betaSkeinisRetractionofBall18}]
It follows directly from Proposition \ref{ultrastability}.
\end{proof}
In the proof of the main theorem, given a separable subset, we will consider a bigger separable subset that is in some sense closed for the operation of taking ancestors closer than $1/8$. Specifically, we have the following Lemma:

\begin{lemma}
\label{EngordarS}
Given a separable subset $S$ of the metric space $\text{Sk}(\omega_1)$, there exists a separable set $\widehat{S}\subset \text{Sk}(\omega_1)$ containing $S$ such that: for every point $x\in \widehat{S}$ and every ordinal $\beta<\omega_1$ such that $d(x,\text{Sk}(\beta))<1/8$, the unique ancestor of order $\beta$ of $x$ belongs to $\widehat{S}$.
\end{lemma}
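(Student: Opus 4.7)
The plan is to build $\widehat{S}$ in two stages: first I close a countable dense subset of $S$ under the ancestor operation via a countable inductive process, then I take the topological closure. The key cardinality input is that every point of $\text{Sk}(\omega_1)$ lies in $\text{Sk}(\alpha)$ for some countable $\alpha$, so only countably many ordinals $\beta$ can be relevant to a given point. The key topological input is Proposition \ref{ultrastability}, which tells us $P_\beta$ is $1$-Lipschitz on $B(\text{Sk}(\beta),1/8)^\circ$.

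First I would fix a countable dense subset $D\subset S$ and set $C_0=D$. Inductively define
\[
C_{n+1}=C_n\cup\{P_\beta(x)\ :\ x\in C_n,\ \beta<\omega_1,\ d(x,\text{Sk}(\beta))<1/8\}.
\]
Each $C_n$ is countable: by Proposition \ref{ultrastability} the ancestor $P_\beta(x)$ is well defined whenever $d(x,\text{Sk}(\beta))<1/8$, and for a fixed $x$ of countable order $\alpha=\text{ord}(x)$, ancestors with $\beta\geq\alpha$ equal $x$ itself, while the ordinals $\beta<\alpha$ form a countable set. Hence each $x\in C_n$ contributes at most countably many new points, and $C_{n+1}$ is countable. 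Set $C=\bigcup_{n\geq 0}C_n$, a countable set which, by construction, is closed under the ancestor operation whenever the $1/8$ proximity condition is met.

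Then I would define $\widehat{S}:=\overline{C}$, which is separable and contains $S$ (since $D\subset C$ is dense in $S$). To verify the stability property, take any $x\in\widehat{S}$ and any $\beta<\omega_1$ with $d(x,\text{Sk}(\beta))<1/8$. Choose a sequence $(x_n)_{n}\subset C$ with $x_n\to x$. For all sufficiently large $n$ we still have $d(x_n,\text{Sk}(\beta))<1/8$, so $P_\beta(x_n)$ is well defined and, by construction of $C$, belongs to $C$. Since $P_\beta$ is $1$-Lipschitz on the ball $B(\text{Sk}(\beta),1/8)^\circ$ by Proposition \ref{ultrastability}, we have $P_\beta(x_n)\to P_\beta(x)$, and therefore $P_\beta(x)\in\overline{C}=\widehat{S}$.

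The only potential obstacle is the interplay between the uncountable index set of ordinals $\beta<\omega_1$ and the requirement of separability; this is resolved by the observation that for each specific point only countably many ordinals matter, combined with the Lipschitz continuity of $P_\beta$ which transfers ancestor-closure from the countable seed $C$ to its closure $\widehat{S}$. No further delicate argument is needed.
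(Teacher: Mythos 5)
Your proof is correct and follows essentially the same strategy as the paper: take a countable dense subset of $S$, close it under the ancestor operation through a countable iteration (using that each point has only countably many relevant ordinals $\beta$, namely those below its order), take the topological closure, and transfer the closure property to limits via the $1$-Lipschitz continuity of $P_\beta$ from Proposition \ref{ultrastability}. Your $\omega$-step saturation $C=\bigcup_n C_n$ is a slightly cleaner packaging than the paper's explicit stabilizing decreasing sequence of ordinals $\beta_n(x)$, but the underlying argument is the same.
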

\begin{proof}
For any point $x\in \text{Sk}(\omega_1)$, put $\beta_0(x)=\text{ord}(x)$, which is a countable ordinal. Trivially we have that the ancestor $P_{\beta_0}(x)=x$ is unique. We might define then $\beta_1(x)=\min\{\beta<\omega_1\colon d\big(P_{\beta_0}(x),\text{Sk}(\beta)\big)<1/8\}$, which verifies that $P_{\beta_1}\big(P_{\beta_0}(x)\big)$ is unique as well.

Then, we can inductively define a decreasing sequence of ordinal numbers 
$$\beta_n(x)=\min\{\beta<\omega_1\colon d\big(\big(P_{\beta_{n-1}(x)}\circ\dots\circ P_{\beta_0(x)}\big)(x),\text{Sk}(\beta)\big)<1/8\}$$
for each $n\in\mathbb{N}$. Since $\beta_{n+1}(x)\leq \beta_n(x)$ for every $n\in\mathbb{N}$ and the ordinal numbers are well ordered, there must exist $n_0(x)\in\mathbb{N}$ such that $\beta_n(x)=\beta_{n_0}(x)$ for all $n\geq n_0$. 

Now, given a separable subset $S$ of the metric space $M$, take $D$ a countable and dense subset of $S$. Consider the set $\widehat{D}$ defined by:
$$ \widehat{D}=\bigcup_{x\in D}\bigcup_{n\in\mathbb{N}}\bigcup_{\beta=\beta_{n}(x)}^{\beta_{n-1}(x)}P_\beta\big(\big(P_{\beta_{n-1}(x)}\circ\dots\circ P_{\beta_0(x)}\big)(x)\big),$$
which is countable, contains $D$, and verifies that for any point $x\in \widehat{D}$ and any ordinal $\beta$ with $d(x,\text{Sk}(\beta))<1/8$, the ancestor $P_\beta(x)$ belongs to $\widehat{D}$ as well. 

Finally, put $\widehat{S}=\overline{\widehat{D}}$. The set $\widehat{S}$ is separable and it contains $S$. For any point $x\in \widehat{S}$ and any ordinal $\beta<\omega_1$ such that $x$ belongs to the ball $B(\text{Sk}(\beta),1/8)^\circ$, we have that $x$ is the limit of a sequence $\{x_n\}_{n\in\mathbb{N}}$ of points in $\widehat{D}$ which are also in $B(\text{Sk}(\beta),1/8)^\circ$. As we argued in the proof of Proposition \ref{ultrastability}, we have that the sequence $\{P_\beta(x_n)\}_{n\in\mathbb{N}}$, which is contained in $\widehat{D}$, converges to $P_\beta(x)$. The statement of the remark now follows directly.
\end{proof}

We will use as well the fact that every separable subset of the skein $\text{Sk}(\omega_1)$ is contained in the closure of the union of countably many threads:

\begin{lemma}
\label{Containedincountable}
Let $S$ be a separable subset of the skein $\text{Sk}(\omega_1)$. Then there exists a countable family of pairs $\{(x_n,y_n)\}_{n\in\mathbb{N}}$ in $\text{Sk}(\omega_1)\times\text{Sk}(\omega_1)$ and a countable family of sequences $\{\gamma^n\}_{\in\mathbb{N}}$ in $\Delta$ such that the following property is verified:

For every point $x\in S$ in a successor ordinal generation there exists a natural number $n_x$ such that $x$ belongs to the interior of the thread $T_{\gamma^{n_x}}(x_{n_x},y_{n_x})\subset \text{Sk}(\omega_1)$.
\end{lemma}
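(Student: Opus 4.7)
The plan is to extract the desired countable collection of threads from a countable dense subset of $S$, leveraging the fact that the non-anchor points of each thread form an open set of $\mathrm{Sk}(\omega_1)$.

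First, I would fix a countable dense subset $D$ of $S$, and then restrict attention to $D_s=\{d\in D\colon d\text{ lies in a successor ordinal generation}\}$, which is still countable. For any point $z$ in a successor generation $G_{\beta+1}$, by construction of the skein space $z$ belongs to some threading space $\mathrm{Th}(p,q)$ with $p,q\in\mathrm{Sk}(\beta)$, and therefore $z$ lies in some thread $T_\gamma(p,q)$. Moreover, since threads inside a single threading space intersect only at the anchors, and threads attached at different stages are disjoint outside the corresponding anchor sets, this triple $(p,q,\gamma)$ is \emph{uniquely} determined by $z$ (the anchors themselves lie in $\mathrm{Sk}(\beta)$, so they cannot be in $G_{\beta+1}$). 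Thus for each $d\in D_s$ I can assign the unique thread $T_{\gamma^d}(x_d,y_d)$ containing $d$ in its interior. Enumerating $D_s=\{d_n\}_{n\in\mathbb{N}}$ produces the sought countable family $(x_n,y_n):=(x_{d_n},y_{d_n})$ and $\gamma^n:=\gamma^{d_n}$.

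Next I would verify the covering property. Let $x\in S$ lie in a successor ordinal generation $G_{\beta+1}$, and let $T_\gamma(p,q)$ be the unique thread containing $x$. The paper has already established that any open subset of $T_\gamma(p,q)$ not containing the anchors $\{p,q\}$ is open in $\mathrm{Sk}(\omega_1)$; in particular $T_\gamma(p,q)\setminus\{p,q\}$ is open in $\mathrm{Sk}(\omega_1)$ and contains $x$. Hence there is $r>0$ with $B(x,r)\subset T_\gamma(p,q)\setminus\{p,q\}$. By density of $D$ in $S$, I can pick $d\in D\cap B(x,r)$, so that $d\in T_\gamma(p,q)\setminus\{p,q\}$. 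This forces $d$ to be in the interior of that thread and hence in the successor generation $G_{\beta+1}$, so $d\in D_s$; furthermore uniqueness of the assigned thread forces $(x_d,y_d,\gamma^d)=(p,q,\gamma)$. Therefore $x$ lies in the interior of $T_{\gamma^{n_x}}(x_{n_x},y_{n_x})$ for $n_x$ the index of $d$ in the enumeration.

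The only subtle step is confirming the uniqueness of the thread assigned to a successor-generation point; I would spell this out briefly by recalling that in the attachment defining a threading space one has $T_{\gamma_1}(p,q)\cap T_{\gamma_2}(p,q)=\{p,q\}$ for $\gamma_1\neq\gamma_2$, and that different threading spaces, being attached via disjoint new copies, share only their common anchors, which always belong to a strictly earlier skein. No deep machinery is required; the argument is essentially a bookkeeping exercise using the openness of thread interiors. I do not foresee a serious obstacle.
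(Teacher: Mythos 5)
There is a genuine gap, and it is located exactly where you flagged the argument as unproblematic: the claim that $T_\gamma(p,q)\setminus\{p,q\}$ is open in $\text{Sk}(\omega_1)$. The paper only establishes that this set is open in $\text{Sk}(\beta+1)$, the stage at which the thread is created, and the stronger statement is false. Indeed, if $z\in T_\gamma(p,q)\setminus\{p,q\}$ lies in generation $G_{\beta+1}$, then at stage $\beta+2$ new threading spaces $\text{Th}(z,w)$ are attached with $z$ as an anchor, and since $0$ is an accumulation point of every set $T_{\gamma'}$ (no neighbourhood of $0$ in $[0,1]$ can be covered by the countably many disjoint gaps), these new threads contain points of $G_{\beta+2}$ arbitrarily close to $z$. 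So no ball $B(x,r)$ in $\text{Sk}(\omega_1)$ is contained in a thread's interior, and the density step ``pick $d\in D\cap B(x,r)$, then $d$ lies in the same thread'' collapses. The failure is not cosmetic: take $x$ in the interior of a generation-$1$ thread $T$, choose points $z_n$ in generation-$2$ threads anchored at $x$ with $z_n\to x$, and let $S=\{x\}\cup\{z_n\}_n$. Then $D=\{z_n\}_n$ is dense in $S$, but the family of threads you extract from $D$ consists only of generation-$2$ threads, each of which has $x$ as an \emph{anchor} rather than an interior point, so the covering property fails for your family even though the lemma itself is true (here one could take $D=S$, but nothing in your construction forces the dense set to contain $x$). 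In short, a countable dense subset of $S$ can miss every thread whose interior meets $S$ at a given point, so the family cannot be read off from $D$ alone.

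The paper avoids this by arguing indirectly. Separability gives that $S$ meets only countably many generations, so if the conclusion failed there would be a single successor generation $G_{\alpha_0+1}$ in which uncountably many distinct threads meet $S$ in their interiors. Picking one such point $p_i$ per thread, each $p_i$ is at distance at least $r_i=d\big(p_i,\{x_i,y_i\}\big)>0$ from every other thread of that generation, so the balls $B(p_i,r_i/2)\cap S$ form an uncountable pairwise disjoint family of nonempty open subsets of $S$, contradicting separability. If you want to keep a constructive flavour, you would have to enumerate, for each of the countably many relevant successor generations, the (necessarily countable, by the above ccc-type argument) set of threads of that generation whose interior meets $S$ — but that countability is precisely the content you cannot obtain from a dense subset and must prove by the separation argument.
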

\begin{proof}
Suppose by contradiction that the result fails. Since $S$ is separable, there are only countably many ordinals $\alpha<\omega_1$ such that the intersection of $S$ with generation $G_{\alpha}$ is nonempty. Hence, there must exist one successor ordinal $\alpha_0+1$ such that for every countable collection of pairs $\big\{\{x_n,y_n\}\big\}_{n\in\mathbb{N}}$ in $\Gamma_{\alpha_0}$ and any countable family of sequences $\{\gamma^n\}_{\in\mathbb{N}}$ in $\Delta$, there exists a point in $S\cap G_{\alpha_0+1}$ that lies outside of the interior of the thread $T_{\gamma^n}(x_n,y_n)\subset \text{Sk}(\alpha_0+1)$ for all $n\in\mathbb{N}$. 

Since every point in $S\cap G_{\alpha_0+1}$ belongs to the interior of a thread anchored at a pair of points in $\Gamma_{\alpha_0}$, by a standard transfinite induction argument we may find an uncountable set of different points $\{p_i\}_{i\in I}$ in $S\cap G_{\alpha_0+1}$ and an uncontable family of different threads $\{T_{\gamma^i}(x_i,y_i)\}_{i\in I}$ with $\{x_i,y_i\}\in \Gamma_{\alpha_0}$ and $\gamma^i\in \Delta$ for all $i\in I$, such that $p_i$ belongs to the interior of the thread $T_{\gamma^i}(x_i,y_i)$ for all $i\in I$. However, this implies that the family $\{T_{\gamma^i}(x_i,y_i)^\circ \cap S\}_{i\in I}$ is an uncountable collection of nonempty and open subsets of $S$ which are pairwise disjoint, which contradicts the separability of $S$.
\end{proof}

\subsection{Proving the general case}

We proceed now to prove the main result of this article:
\begin{proof}[Proof of Theorem \ref{maintheorem}]
Consider the complete skein $\text{Sk}(\omega_1)$. We will prove that it does not contain any non-singleton separable Lipschitz retracts. 

We will proceed by contradiction. Let $S$ be a separable subset of $\text{Sk}(\omega_1)$ containing at least two points. We may assume that $S$ has no isolated points by Proposition \ref{firstreduction}. Suppose there exists a Lipschitz retraction $R\colon \text{Sk}(\omega_1)\rightarrow S$ onto $S$, and put $K=\|R\|_\text{Lip}$. We are going to find a specific thread $T^*$ in $\text{Sk}(\omega_1)$ such that when restricting the map $R$ to $T^*$, the resulting $K$-Lipschitz function can be transformed to yield a contradiction. Because of the length of the proof, we divide it in two parts: The first part describes the process to define the problematic thread $T^*$, while the second part deals with the map $R_{|T^*}$, and how to transform it to arrive at a contradiction. We will also highlight important facts throughout the proof to help in its readability.

\vspace{2mm}
\textbf{1.- Defining the conflicting thread $T^*$}
\vspace{2mm}

We start by finding two points to anchor th thread $T^*$:

\begin{fact}
    There exist two points $p$ and $q$ in $S$ closer than $1/2$, and such that there exists a successor ordinal $\beta_0$ with $p,q\in B(\text{Sk}(\beta_0),1/8)^\circ$ which verifies that $P_{\beta_0}(p)\neq P_{\beta_0}(q)$. 
    
    Moreover, the ancestor $P_{\beta_0}(p)$ is in generation $\beta_0$, and is contained in a thread $T_{\gamma^0}\big(A_0,B_0\big)$ with $\big\{A_0,B_0\big\}\in \Gamma_{\beta_0-1}$.
\end{fact}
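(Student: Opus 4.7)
By Proposition~\ref{firstreduction}, we may assume $S$ has no isolated points, so there exist distinct $p_0, q_0 \in S$ with $d(p_0, q_0) < 1/16$ (a distance chosen so that later triangle-inequality bookkeeping stays below $1/8$). The easy case is when $\beta^* := \max(\text{ord}(p_0), \text{ord}(q_0))$ is a successor ordinal: by minimality of $\beta^*$, one of the two (say $p_0$) lies in $G_{\beta^*}$, and setting $p := p_0$, $q := q_0$, $\beta_0 := \beta^*$ satisfies all four required conditions immediately, since both points lie in $\text{Sk}(\beta_0) \subset B(\text{Sk}(\beta_0), 1/8)^\circ$, $P_{\beta_0}(p) = p \in G_{\beta_0}$, and $P_{\beta_0}(q) = q \neq p$.

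Otherwise, $\alpha := \max(\text{ord}(p_0), \text{ord}(q_0))$ is a limit ordinal; assume without loss of generality that $\text{ord}(q_0) = \alpha$. I would introduce the coincidence set $F := \{\beta < \alpha : P_\beta(p_0), P_\beta(q_0) \text{ are defined and equal}\}$, which by the commuting identity in Proposition~\ref{ultrastability} is downward closed in the range where ancestors are defined. Setting $\beta^* := \sup F$, the fact that $P_\beta(q_0) \to q_0$ while $P_\beta(p_0)$ either stabilizes at $p_0$ or converges to $p_0$ as $\beta \nearrow \alpha$ implies the ancestors cannot agree arbitrarily close to $\alpha$, so $\beta^* < \alpha$.

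If $\beta^* \in F$, I would take $\beta_0 := \beta^* + 1$, which is a successor strictly below $\alpha$ (since $\alpha$ is a limit). At $\beta^* + 1$ the ancestors must diverge: if both $P_{\beta^* + 1}(p_0)$ and $P_{\beta^* + 1}(q_0)$ lay in $\text{Sk}(\beta^*)$ they would equal the common $\beta^*$-ancestor, contradicting divergence, so at least one lies in $G_{\beta^* + 1}$, and I assign $p$ to be the point whose ancestor is in $G_{\beta_0}$. If $\beta^* \notin F$, then $\beta^*$ must be a limit ordinal. Exploiting that $q_0 \in G_\alpha$ is a limit-generation point, I would argue that the successor ordinals $\beta$ at which $P_\beta(q_0) \in G_\beta$ are cofinal in $\alpha$: otherwise $P_\beta(q_0)$ would stabilize at some $c \in \text{Sk}(\beta')$ for all $\beta < \alpha$ sufficiently large, contradicting $P_\beta(q_0) \to q_0 \notin \text{Sk}(\beta')$. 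Then I pick such a ``jump'' successor $\beta_0 \in (\beta^*, \alpha)$ close enough to $\alpha$ that $d(q_0, \text{Sk}(\beta_0)) < 1/16$; by the triangle inequality $d(p_0, \text{Sk}(\beta_0)) < 1/8$, so both points sit in $B(\text{Sk}(\beta_0), 1/8)^\circ$, and since $\beta_0 > \beta^*$ the ancestors $P_{\beta_0}(p_0), P_{\beta_0}(q_0)$ must differ, while $P_{\beta_0}(q_0) \in G_{\beta_0}$ by construction. Setting $p := q_0$, $q := p_0$ completes the argument.

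The main obstacle I anticipate is justifying the cofinality of the ``jump'' successor levels of $q_0$ in $\alpha$: this requires carefully tracking how the ancestor map $\beta \mapsto P_\beta(q_0)$ can and cannot stabilize across both successor and limit ordinals below $\alpha$, in particular showing that a putative stable value $c$ would have to coincide with limits of ancestors from even earlier levels. The distance bookkeeping is then straightforward given the initial closeness $d(p_0, q_0) < 1/16$ and the availability of jumps arbitrarily close to $\alpha$.
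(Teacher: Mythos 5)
Your proposal is correct, but it reaches the Fact by a genuinely different route than the paper. The paper anchors the whole analysis at $\alpha_0$, the least ordinal with $S\cap\text{Sk}(\alpha_0)\neq\emptyset$, and splits into two cases: either some close pair in $S\cap B(\text{Sk}(\alpha_0),1/8)^\circ$ already has distinct $\alpha_0$-ancestors (and then, if $\alpha_0$ is a limit, one descends to the minimal level $\alpha_1$ at which both points are within $1/8$ of the skein, which is a successor by minimality), or else the ancestors of all points of $S$ near some $A\in S\cap\text{Sk}(\alpha_0)$ collapse to $A$, in which case one takes the first level $\eta_0$ at which some nearby point of $S$ has an ancestor different from $A$; that level is a successor because a point of a limit generation is the limit of its earlier ancestors. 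You instead fix one close pair $p_0,q_0$ at the outset, follow their two ancestor trajectories, and locate a successor level just above the supremum $\beta^*$ of the levels where the trajectories agree, invoking the cofinality of ``jump'' successors of $q_0$ below $\alpha$ when $\beta^*$ is not attained. Your route costs you the jump-cofinality lemma (which you correctly identify as the main obstacle, and whose stabilization-through-limit-levels argument is exactly the right one), but it buys an automatic guarantee that $P_{\beta_0}(p)\neq P_{\beta_0}(q)$ at every level above $\beta^*$ --- a point the paper's Case 1 limit subcase leaves implicit when it descends from $\alpha_0$ to $\alpha_1$. Two small repairs: the coincidence set $F$ should be defined with the condition $p_0,q_0\in B(\text{Sk}(\beta),1/8)^\circ$ built in, since that is what Proposition \ref{ultrastability} requires for existence and uniqueness of the ancestors and what guarantees both points lie in $B(\text{Sk}(\beta^*+1),1/8)^\circ$ in the $\beta^*\in F$ branch; and when $F=\emptyset$ one gets $\beta^*=0$, which is not a limit ordinal --- harmless, since your $\beta^*\notin F$ argument never actually uses limitness, but that parenthetical claim should be dropped.
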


\begin{proof}[Proof of Fact 1]

Define $\alpha_0$ as the least ordinal such that $S\cap \text{Sk}(\alpha_0)$ is nonempty,

We divide the proof in two cases:

\textbf{Case 1}: There exist two different points $p$ and $q$ in $B(\text{Sk}(\alpha_0),1/8)^\circ\cap S$ closer than $1/2$ such that $P_{\alpha_0}(p)\neq P_{\alpha_0}(q)$.

In this case, if the ordinal $\alpha_0$ is a successor ordinal, putting $\beta_0=\alpha_0$ we are done, since by definition of $\alpha_0$ both $P_{\alpha_0}(p)$ and $P_{\alpha_0}(q)$ belong to generation $G_{\alpha_0}$. 

Suppose then that $\alpha_0$ is a limit ordinal. Since $p$ and $q$ belong to the ball $B(\text{Sk}(\alpha_0),1/8)^\circ$, the ordinal 
$$\alpha_1 = \min\{\beta<\omega_1\colon \{p,q\}\subset B(\text{Sk}(\beta),1/8)^\circ\} $$
is less than $\alpha_0$. Both $P_{\alpha_1}(p)$ or $P_{\alpha_1}(q)$ are well defined and unique. Moreover, by minimality, $\alpha_0$ must be a successor ordinal, and at least one of $P_{\alpha_1}(p)$ or $P_{\alpha_1}(q)$ must belong to generation $G_{\alpha_1}$. Therefore we can put $\beta_0=\alpha_1$ and Fact 1 is proven for Case 1.

\textbf{Case 2}: There exists a point $A\in S\cap \text{Sk}(\alpha_0)$ such that for all $x\in B(A,1/8)^\circ\cap S$ we have that $P_{\alpha_0}(x)=A$. 

Notice that the above statement follows from negating the assumption of Case 1. In this case define the ordinal
$$ \eta_0 =\min\{\eta<\omega_1\colon P_{\eta}(x)\neq A\text{ for some }x\in B(A,1/8)^\circ\cap S\},$$

Such an ordinal number must exist since $A$ is not isolated in $S$ by assumption. Moreover, $\eta_0$ must be a successor ordinal since every point in a limit ordinal generation is the limit of the succession given by its previous (existing) ancestors. Take now any point $p\in B(A,1/8)^\circ\cap S$ such that $P_{\eta_0}(p)\neq A$, and set $q =A$. Putting $\beta_0=\eta_0$, we have that both $p$ and $q$ belong to $B(\text{Sk}(\beta_0),1/8)^\circ$ and $P_{\beta_0}(p)\neq P_{\beta_0}(q)=q$. Moreover, the ancestor $P_{\beta_0}(p)$ belongs to generation $G_{\beta_0}$ by minimality.
\end{proof}

With this in mind, we can apply Proposition \ref{totallyseparated} to the thread $T_{\gamma^0}(A_0,B_0)$ and the point $P_{\beta_0}(p)$, to find a compact subset $C_0\subset T_{\gamma^0}(A_0,B_0)$ with diameter less than $d\big(P_{\beta_0}(p),P_{\beta_0}(q)\big)$ such that $P_{\beta_0}(p)\in C_0$ and $C_0$ is open and closed in $\text{Sk}(\beta_0)$. Put $C_1=\text{Sk}(\beta_0)\setminus C_0$. Then the point $P_{\beta_0}(q)$ belongs to $C_1$, and since $C_0$ is compact and disjoint from the closed set $C_1$, the distance $d\big(C_0,C_1\big)$ is strictly positive. Put $d_0=d\big(C_0,C_1\big)>0$.

Figure \ref{SeparatingPbetas} summarizes one possible layout of the elements we have defined so far in the skein $\text{Sk}(\beta_0)$.

\begin{figure}
    \includegraphics[width=\textwidth]{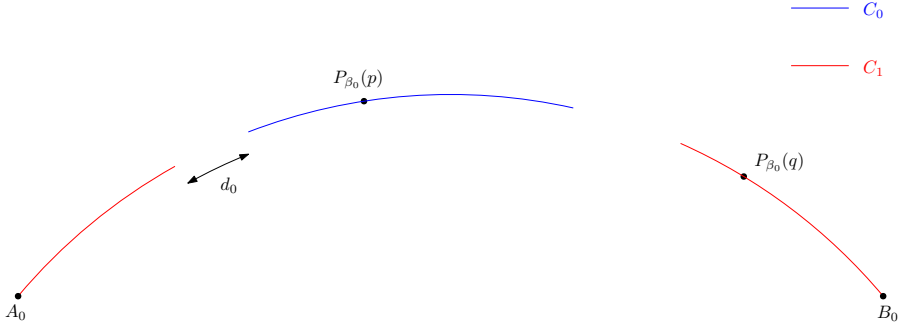}
    \caption{Thread of the skein $\text{Sk}(\beta_0)$ showing one possible arrangement of the ancestors of $p$ and $q$ and choice of $C_0$ and $C_1$.}
    \label{SeparatingPbetas}
\end{figure}

Now, the separation between the sets $C_0$ and $C_1$ allows us to use Remark \ref{threadandseparatedsets} to obtain the following fact:

\begin{fact}
\label{Finalthread}
If $T=[0,l]$ is a thread whose gaps are all smaller than $d_0/2K$, there cannot be any $2K$-Lipschitz map $F\colon T\rightarrow \text{Sk}(\beta_0)$ such that $F(0)=P_{\beta_0}(p)\in C_0$ and $F(l)=P_{\beta_0}(q)\in C_1$. 
\end{fact}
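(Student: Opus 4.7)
The plan is that Fact \ref{Finalthread} will follow essentially immediately from Remark \ref{threadandseparatedsets}; the preceding construction of $C_0$ via Proposition \ref{totallyseparated} was engineered precisely so that this reduction works cleanly.

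First I would record the clopen dichotomy that the setup gives us. By the choice made through Proposition \ref{totallyseparated}, the set $C_0$ is both open and closed in $\text{Sk}(\beta_0)$; hence its complement $C_1 = \text{Sk}(\beta_0) \setminus C_0$ is also closed (in fact clopen). Thus both $C_0$ and $C_1$ are closed subsets of the complete metric space $\text{Sk}(\beta_0)$, we have $\text{Sk}(\beta_0) = C_0 \cup C_1$, and the separation $d(C_0, C_1) = d_0 > 0$ (which is a positive number precisely because $C_0$ is compact and disjoint from the closed set $C_1$, as was observed just before stating the fact).

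Next I would suppose for contradiction that a $2K$-Lipschitz map $F \colon T \to \text{Sk}(\beta_0)$ exists with $F(0) = P_{\beta_0}(p) \in C_0$ and $F(l) = P_{\beta_0}(q) \in C_1$. Since the image of $F$ automatically sits inside $C_0 \cup C_1$, and by hypothesis every gap of $T$ has length strictly less than $d_0/(2K) = d(C_0, C_1)/(2K)$, Remark \ref{threadandseparatedsets}, applied with $S_1 = C_0$, $S_2 = C_1$, separation $\varepsilon = d_0$, and Lipschitz constant $2K$, directly contradicts the existence of such an $F$ (taking the distinguished point $p$ in the statement of that remark to be $l$).

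I do not anticipate any genuine obstacle for this fact in isolation; essentially all of the real content was packed into the preparatory step that delivered a clopen $C_0$ separating $P_{\beta_0}(p)$ from $P_{\beta_0}(q)$. Once such a $C_0$ is available, Fact \ref{Finalthread} is exactly the specialization of the general principle in Remark \ref{threadandseparatedsets}: a thread whose gaps are all strictly smaller than $\varepsilon/K'$ cannot bridge two closed pieces of the target at distance $\varepsilon$ under any $K'$-Lipschitz map.
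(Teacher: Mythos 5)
Your proposal is correct and follows exactly the route the paper intends: the paper introduces Fact \ref{Finalthread} with the sentence that ``the separation between the sets $C_0$ and $C_1$ allows us to use Remark \ref{threadandseparatedsets}'', and your application of that remark with $S_1=C_0$, $S_2=C_1$, $\varepsilon=d_0$ and Lipschitz constant $2K$ is precisely that argument. No gaps; the observation that $d_0>0$ because $C_0$ is compact, clopen, and disjoint from the closed set $C_1$ is the same justification the paper gives just before stating the fact.
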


Next, we define the subset $\widehat{S}\subset \text{Sk}(\omega_1)$ using Lemma \ref{EngordarS} such that the following fact is verified:
\begin{fact}
\label{widehatScontainsancestry}
The set $\widehat{S}\subset \text{Sk}(\omega_1)$ is separable, it contains the set $S$, and for any point $x\in \widehat{S}$ and any ordinal number $\beta$ such that $x\in B(\text{Sk}(\beta),1/8)$, the unique ancestor $P_\beta(x)$ belongs to $\widehat{S}$.
\end{fact}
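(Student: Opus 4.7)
The plan is direct: apply Lemma \ref{EngordarS} to the separable set $S$. That lemma, proven just above, produces a separable superset $\widehat{S}$ closed under the ancestor operation $P_\beta$ at every order $\beta<\omega_1$ for which the point in question lies in $B(\text{Sk}(\beta),1/8)^\circ$. This is exactly the assertion of Fact \ref{widehatScontainsancestry}, so formally the proof is a single invocation, and the Fact is best read as naming the set $\widehat{S}$ that Lemma \ref{EngordarS} hands us and recording the properties that will be used downstream.

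To make the dependence transparent, I would briefly recall why Lemma \ref{EngordarS} works in our setting. Start with a countable dense subset $D\subset S$. For each $x\in D$, iterate the ancestor operation: set $\beta_0(x)=\text{ord}(x)$ and recursively $\beta_n(x)=\min\{\beta<\omega_1 : d((P_{\beta_{n-1}(x)}\circ\cdots\circ P_{\beta_0(x)})(x),\text{Sk}(\beta))<1/8\}$. Since the $\beta_n(x)$ form a non-increasing sequence of countable ordinals, well-foundedness forces stabilisation after finitely many steps. Collect into a countable set $\widehat{D}$ all intermediate ancestors $P_\beta(\cdots)$ for $\beta$ ranging over the intervals between successive $\beta_n(x)$ values, over all $x\in D$, and set $\widehat{S}=\overline{\widehat{D}}$. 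The set $\widehat{S}$ is then separable and contains $S$ by construction, and closure under $P_\beta$ for points already lying in $\widehat{D}$ is built into the recursive definition.

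The only slightly delicate point, and where Proposition \ref{ultrastability} is essential, is promoting the ancestor-closure from $\widehat{D}$ to its closure $\widehat{S}$. Given $x\in\widehat{S}$ with $d(x,\text{Sk}(\beta))<1/8$, openness of $B(\text{Sk}(\beta),1/8)^\circ$ lets us pick an approximating sequence $(x_n)\subset\widehat{D}$ also lying in that ball. By Proposition \ref{ultrastability}, pairwise distances satisfy $d(P_\beta(x_n),P_\beta(x_m))\leq d(x_n,x_m)$, so $(P_\beta(x_n))_n$ is Cauchy and converges to the (unique) ancestor $P_\beta(x)$; since every $P_\beta(x_n)$ belongs to $\widehat{D}$ by construction, the limit $P_\beta(x)$ lies in $\overline{\widehat{D}}=\widehat{S}$. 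I do not expect any genuine obstacle here, as Fact \ref{widehatScontainsancestry} is really a restatement of Lemma \ref{EngordarS} specialised to the separable set $S$ appearing in the proof of Theorem \ref{maintheorem}.
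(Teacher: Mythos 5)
Your proposal is correct and matches the paper exactly: the paper establishes this Fact simply by defining $\widehat{S}$ via Lemma \ref{EngordarS}, and your recap of that lemma's proof (countable dense set, iterated ancestors stabilising by well-foundedness of the ordinals, then passing to the closure via the $1$-Lipschitz ancestor map from Proposition \ref{ultrastability}) mirrors the paper's own argument for the lemma. No gaps.
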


To continue with the proof, since $\widehat{S}$ is separable, by Lemma \ref{Containedincountable} we can find a countable family of sequences $\{\gamma^n\}_{n\in\mathbb{N}}$ in $\Delta$, and a countable set of pairs $\{(x_n,y_n)\}_{n\in\mathbb{N}}$ in $\text{Sk}(\omega_1)\times \text{Sk}(\omega_1)$ such that, denoting by $T^n$ the thread $T_{\gamma^n}(x_n,y_n)\subset \text{Sk}(\omega_1)$ for each $n\in\mathbb{N}$, the countable family of threads $\mathcal{T}_0=\{T^n\}_{n\in\mathbb{N}}$ verifies that any point $x\in \widehat{S}$ belonging to a successor ordinal generation is contained in the interior of at least one thread $T^{n_x}$ for some $n_x\in\mathbb{N}$. For every $n\in\mathbb{N}$, the thread $T^n=T_{\gamma^n}(x_n,y_n)\in \mathcal{T}_0$ has length $1$, and so the open subsets given by $[x_n,x_n+1/8)_{T^n}$ and $(y_n-1/8,y_n]_{T^n}$ are separable subsets that do not intersect. Define for every $n\in\mathbb{N}$ two countable sets $D^n_1$ and $D^n_2$ such that $D^n_1$ is dense in $[x_n,x_n+1/8)_{T^n}$ and $D^n_2$ is dense in $(y_n-1/8,y_n]_{T^n}$. 

Finally, we can define the countable family of threads given by 
$$ \mathcal{T}=\bigcup_{n\in\mathbb{N}}\Bigg(\bigcup_{(x,y)\in D^n_1\times D^n_2}\big\{[x,y]_{T^n}\big\}\Bigg).$$
Notice that each thread in $\mathcal{T}_0$ has Lebesgue measure of at least $1/2$. Therefore, the measure of the threads in $\mathcal{T}$ is bounded below by $1/4$. We can apply now Theorem $\ref{nolipschitzmap}$ with $\varepsilon=1/4$ and $2K\geq 1$ to find a sequence $\gamma^* \in \Delta$ with the following property:

\begin{fact}
\label{choicesequence}

There exists a sequence $\gamma^* = \{\gamma^*_k\}_{k\in\mathbb{N}}\in \Delta$ such that:

For any thread $S$ of length $l_S$ whose sequence of gaps $\{C^S_k\}_{k\in\mathbb{N}}$ in decreasing length order verifies $\text{length}(C^S_k)<\gamma^*_k$ for all $k\in\mathbb{N}$, it holds that: For every $K'\leq 2K$, if there exists a $K'$-Lipschitz function $F\colon S\rightarrow [x,y]_{T^n}$ such that $F(0)=x$ and $F(l_s)=y$, where $n\in\mathbb{N}$ and $(x,y)\in D^n_1\times D^n_2$; then the thread $S$ has a gap of length greater than or equal to $d(x,y)/K'$.

Moreover, without loss of generality we may choose $\gamma^*$ such that  
$$\gamma^*_k<\min\bigg\{\frac{1}{16K},\frac{d_0}{2K}\bigg\}$$
for all $k\in\mathbb{N}$.
\end{fact}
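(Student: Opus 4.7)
The plan is to derive Fact~\ref{choicesequence} as a direct application of Theorem~\ref{nolipschitzmap} to the countable family $\mathcal{T}$, with parameters $K_0 = 2K$ and $\varepsilon = 1/4$, and then shrink the resulting sequence componentwise to meet the extra numerical bound.

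First I would check that $\mathcal{T}$ satisfies the two hypotheses of Theorem~\ref{nolipschitzmap}. Countability is immediate, since $\mathcal{T}$ is indexed by $\mathbb{N} \times \bigcup_n (D^n_1 \times D^n_2)$. For the Lebesgue measure bound, any subthread $[x,y]_{T^n} \in \mathcal{T}$ has $x \in [x_n, x_n+1/8)$ and $y \in (y_n - 1/8, y_n]$, so (identifying $T^n$ with the corresponding subset of $[0,1]$) the interval $[x,y]$ has length at least $3/4$; combined with $\mathrm{length}(T^n) \geq 1/2$ from Proposition~\ref{propertieslgamma}(1), we get $\mathrm{length}([x,y]_{T^n}) \geq 1/4 = \varepsilon$. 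For the infinite-gaps property, observe that any gap of $T^n$ strictly contained in $(x,y)$ is also a gap of $[x,y]_{T^n}$; given an open interval $I$ meeting $[x,y]_{T^n}$ at some point $p \in (x,y) \cap T^n$, a small neighborhood of $p$ inside $I \cap (x,y)$ intersects $T^n$, so by Proposition~\ref{propertieslgamma}(4) it contains infinitely many gaps of $T^n$, all lying in $(x,y)$. The subtle case is when $I$ meets $[x,y]_{T^n}$ only at $p \in \{x,y\}$; this is handled by choosing, from the start, the dense sets $D^n_1$ and $D^n_2$ to avoid the (countably many) right-isolated and left-isolated points of $T^n$. The extreme points $0,1$ of each $T_{\gamma^n}$ cause no problem either: if $(0,\delta) \cap T_{\gamma^n} = \emptyset$, then by connectedness $(0,\delta)$ would lie in a single gap $(q_{n_i}, q_{n_i} + \gamma_i)$, impossible since $q_{n_i}>0$.

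Applying Theorem~\ref{nolipschitzmap} with $K_0 = 2K$ and $\varepsilon = 1/4$ to the family $\mathcal{T}$ then yields a decreasing sequence $\widetilde{\gamma} = (\widetilde{\gamma}_k)_{k \in \mathbb{N}}$ of positive reals such that, whenever the gaps of a thread $S$ satisfy $\mathrm{length}(C^S_k) \leq \widetilde{\gamma}_k$ for all $k$, and $\mathrm{length}(C^S_1) < a/K'$ (with $a$ the width of some $[x,y]_{T^n} \in \mathcal{T}$), there is no $K'$-Lipschitz map $S \to [x,y]_{T^n}$ sending $0 \mapsto x$ and $l_S \mapsto y$. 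Since the width of $[x,y]_{T^n}$ equals the distance $d(x,y)$ in $\mathrm{Sk}(\omega_1)$, the contrapositive is precisely the first part of the Fact.

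Finally, for the ``moreover'' part, set
\[
\gamma^*_k \;=\; \min\!\Bigl\{\widetilde{\gamma}_k,\ 2^{-(k+2)},\ \tfrac{1-q_1}{2},\ \tfrac{1}{17K},\ \tfrac{d_0}{3K}\Bigr\}.
\]
This sequence is strictly positive, strictly decreasing (inheriting monotonicity from $\widetilde{\gamma}_k$ and the constant bounds), satisfies $\gamma^*_k < 2^{-(k+1)}$ and $q_1 + \gamma^*_1 < 1$, so it lies in $\Delta$; moreover $\gamma^*_k < \min\{1/(16K), d_0/(2K)\}$ by construction. Since $\gamma^*_k \leq \widetilde{\gamma}_k$, any thread $S$ whose gaps are bounded by $\gamma^*$ has gaps bounded by $\widetilde{\gamma}$, so the property carries over. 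The only delicate step in this plan is the verification of the infinite-gaps hypothesis at the boundary points of each subthread, which is why the careful choice of $D^n_1, D^n_2$ needs to be noted.
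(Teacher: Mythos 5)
Your proposal matches the paper's own argument: the paper likewise obtains this Fact by applying Theorem \ref{nolipschitzmap} to the countable family $\mathcal{T}$ with $\varepsilon=1/4$ and Lipschitz bound $2K$, identifying the width of the subthread $[x,y]_{T^n}$ with $d(x,y)$, and then shrinking the resulting sequence into $\Delta$ so as to meet the extra numerical bound (as licensed by the remark following Theorem \ref{nolipschitzmap}). Your verification of the hypotheses is more explicit than the paper's — in particular the infinite-gaps condition near the endpoints $x,y$, which the paper does not discuss — but this is a filling-in of detail rather than a genuinely different route.
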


Since the sequence $\gamma^*$ belongs to $\Delta$, the associated thread $T_{\gamma^*}(p,q)$ belongs to the threading space $\text{Th}(p,q)$, and is therefore a subset of $\text{Sk}(\omega_1)$. Put $T^*=T_{\gamma^*}(p,q)$. This is the problematic thread we will use to reach a contradiction. Recall that the length of the gaps of the thread $T^*$ is given by the sequence $\gamma^*\in\Delta$. Hence, we have the following result by the choice of $\gamma^*$ and Fact \ref{Finalthread}:
\begin{fact}
\label{choicethread}
There does not exist any $2K$-Lipschitz map $F\colon T^*\rightarrow \text{Sk}(\beta_0)$ such that $F(p)=P_{\beta_0}(p)$ and $F(q)=P_{\beta_0}(q)$.
\end{fact}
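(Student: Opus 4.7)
The plan is to invoke Fact \ref{Finalthread} directly: the sequence $\gamma^*$ was engineered precisely so that the gaps of the thread $T^* = T_{\gamma^*}(p,q)$ would be small enough to make that earlier fact applicable. So the entire argument should be a matter of verifying hypotheses, with no genuine new content.

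First I would recall, using Proposition \ref{propertieslgamma}(3), that the sequence of gaps of $T^*$ ordered by decreasing length is exactly $\{G_k^{\gamma^*}\}_{k\in\mathbb{N}}$, with $\text{length}(G_k^{\gamma^*}) = \gamma^*_k$ for every $k$. Bounding the gap lengths of $T^*$ is therefore equivalent to bounding the entries of $\gamma^*$.

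Next I would invoke the ``moreover'' clause of Fact \ref{choicesequence}, which guarantees $\gamma^*_k < d_0/(2K)$ for every $k \in \mathbb{N}$. Consequently every gap of $T^*$ has length strictly less than $d_0/(2K)$, which is precisely the gap-size hypothesis appearing in Fact \ref{Finalthread}.

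Finally, since $T^* \subset \text{Th}(p,q)$ is attached at the anchors $\{p,q\}$ (identified with the extreme points of the underlying $\mathbb{R}$-thread of length $1$), the boundary conditions $F(p) = P_{\beta_0}(p) \in C_0$ and $F(q) = P_{\beta_0}(q) \in C_1$ are exactly the ones forbidden by Fact \ref{Finalthread}. Applying that fact yields Fact \ref{choicethread} at once. There is no real obstacle in this step; all of the substantive work has been pushed upstream into the construction of $\gamma^*$ via Theorem \ref{nolipschitzmap} and into the separation of $C_0$ and $C_1$ in Fact~1.
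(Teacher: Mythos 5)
Your proposal is correct and follows exactly the paper's own route: the paper likewise observes that the gaps of $T^*=T_{\gamma^*}(p,q)$ have lengths $\gamma^*_k<d_0/(2K)$ by the ``moreover'' clause of Fact \ref{choicesequence}, and then applies Fact \ref{Finalthread} (which rests on Remark \ref{threadandseparatedsets} and the separation $d(C_0,C_1)=d_0$) to the boundary values $P_{\beta_0}(p)\in C_0$ and $P_{\beta_0}(q)\in C_1$. There is no gap; all the substance is indeed upstream in the construction of $\gamma^*$ and of $C_0,C_1$.
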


In the next section we will find a function in contradiction with this last fact. The retraction $R$ from $\text{Sk}(\omega_1)$ onto $S$ can be restricted to $T^*$ to obtain a $K$-Lipschitz map $R_{|T^*}\colon T^*\rightarrow \widehat{S}$ such that $R(p)=p$ and $R(q)=q$. This restriction will be the starting point in the process to define the contradicting function. 

\vspace{2mm}
\textbf{2.- Transforming the map $R_{|T^*}$}
\vspace{2mm}

We can only ensure that the image of the map $R_{|T^*}$ is contained in $S$, and so the order of $R_{|T^*}(T^*)$ is less than the order of $S$, but it can still be higher than $\beta_0$. We are going to transform inductively the map $R_{|T^*}$ to reduce the order of its image until we arrive at $\beta_0$, where we will reach a contradiction. 

In order to do this, we need the following result:

\begin{claim}
\label{reduceorderlemma}
Let $T^*,~ \widehat{S},~ K$ and $\beta_0$ be defined as above. Let $F\colon T^*\rightarrow \widehat{S}$ be a Lipschitz map such that $\|F\|_\text{Lip}<2K$, and $F(p)=P_\beta(p)$ and $F(q)=P_\beta(q)$ for some ordinal $\beta\geq \beta_0$. Then we have the three following results:
\begin{enumerate}[label=(\Alph*),ref=(\Alph*)]
    \item If $\text{ord}(F(T^*))$ is a limit ordinal then there exists an ordinal $\widehat{\beta}\geq \beta_0$, and a $K$-Lipschitz function $\widehat{F}\colon T^*\rightarrow \widehat{S}$ such that $\widehat{F}(p)=P_{\widehat{\beta}}(p)$, $\widehat{F}(q)=P_{\widehat{\beta}}(q)$, and $\text{ord}(\widehat{F}(T^*))<\text{ord}(F(T^*))$.
    \item If $\text{ord}(F(T^*))$ is a successor ordinal $\alpha+1$ such that $\beta<\alpha+1$ then for every $\varepsilon>0$ such that $\|F\|_\text{Lip}+\varepsilon< 2K$, there exists a Lipschitz function $\widehat{F}\colon T^*\rightarrow \widehat{S}$ such that $\|\widehat{F}\|_\text{Lip}<\|F\|_\text{Lip}+\varepsilon$, $\widehat{F}(p)=P_{\beta}(p)$, $\widehat{F}(q)=P_\beta(q)$, and $\text{ord}(\widehat{F}(T^*))\leq\alpha<\text{ord}(F(T^*))$.
    \item If $\text{ord}(F(T^*))$ is a successor ordinal $\alpha+1$ such that $\beta=\alpha+1$, and $\beta>\beta_0$, then for every $\varepsilon>0$ such that $\|F\|_\text{Lip}+\varepsilon< 2K$, there exists a Lipschitz function $\widehat{F}\colon T^*\rightarrow\widehat{S}$ such that $\|\widehat{F}\|_\text{Lip}<\|F\|_\text{Lip}+\varepsilon$, $\widehat{F}(p)=P_\alpha(p)$, $\widehat{F}(q)=P_\alpha(q)$ and $\text{ord}(\widehat{F}(T^*))\leq\alpha<\text{ord}(F(T^*))$.
\end{enumerate}
\end{claim}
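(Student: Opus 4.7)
The plan is to prove (A), (B), (C) separately, using compactness of $F(T^*)$ for (A) and an excursion analysis for (B) and (C). Throughout, I will use that the gaps of $T^*$ are bounded by $\gamma^*_1<1/(16K)$ and that $\|F\|_\text{Lip}<2K$, so that $F$ moves by less than $1/8$ across any gap.

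For (A), write $\lambda:=\text{ord}(F(T^*))$, a countable limit ordinal. Pick an increasing cofinal sequence $(\alpha_n)$ in $\lambda$ with $\alpha_0\geq\beta_0$. The balls $B(\text{Sk}(\alpha_n),1/8)^\circ$ form an increasing open cover of $\text{Sk}(\lambda)$: a point of order less than $\lambda$ lies in some $\text{Sk}(\alpha_n)$, while a point in generation $G_\lambda$ is a Cauchy limit from $\bigcup_n\text{Sk}(\alpha_n)$ and hence is eventually at distance less than $1/8$ from one of them. By compactness of $F(T^*)$, a single $\widehat{\beta}$ in this sequence satisfies $F(T^*)\subset B(\text{Sk}(\widehat{\beta}),1/8)^\circ$. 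Set $\widehat{F}:=P_{\widehat{\beta}}\circ F$; by Proposition \ref{ultrastability}, $P_{\widehat{\beta}}$ is a $1$-Lipschitz retraction on this ball, so $\|\widehat{F}\|_\text{Lip}\leq\|F\|_\text{Lip}<2K$, the image lies in $\text{Sk}(\widehat{\beta})$ giving $\text{ord}(\widehat{F}(T^*))\leq\widehat{\beta}<\lambda$, and the ancestor commutation in Proposition \ref{ultrastability} yields $\widehat{F}(p)=P_{\widehat{\beta}}(P_\beta(p))=P_{\widehat{\beta}}(p)$, and similarly for $q$.

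For parts (B) and (C), consider the open set $U=F^{-1}(\text{Sk}(\omega_1)\setminus\text{Sk}(\alpha))$ in $T^*$; its components are maximal open extended intervals, each mapping into $G_{\alpha+1}$ and hence into threading spaces $\text{Th}(x,y)$ with $\{x,y\}\in\Gamma_\alpha$. By the small-gap bound, jumps of $F$ across gaps of $T^*$ are less than $1/8$; combined with Lemma \ref{Containedincountable} (which places $F(T^*)\cap G_{\alpha+1}$ in the interiors of threads of the countable family $\mathcal{T}_0$), I can, after a finer subdivision of each excursion at the gaps where $F$ switches between threading spaces, arrange that each sub-excursion $J$ maps into a single $\text{Th}(x,y)$ with its extremes in $\{x,y\}$. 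When both extremes map to the same anchor $s$, Proposition \ref{maximalwithentryeqexit} (with $N=\text{Th}(x,y)$) replaces $F|_J$ by the constant $s\in\text{Sk}(\alpha)$, preserving the Lipschitz constant. When the extremes map to distinct anchors $x,y$, the image of $F|_J$ lies in a single thread $T_{\gamma^n}(x_n,y_n)\in\mathcal{T}_0$; after applying Propositions \ref{wlogFisnondecreasing}, \ref{threadontosubinterval} and \ref{changingextremes} to shift the extremes into $D_1^n\times D_2^n$ at a cost of $\varepsilon$ in the Lipschitz constant, Fact \ref{choicesequence} provides a gap of $J$ of length at least $d(x,y)/(\|F\|_\text{Lip}+\varepsilon)$. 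Placing the jump of a piecewise-constant replacement (value $x$ on the left portion of $J$, value $y$ on the right portion) precisely at that gap produces a Lipschitz replacement of constant at most $\|F\|_\text{Lip}+\varepsilon$ landing in $\text{Sk}(\alpha)$. Gluing all replacements gives the $\widehat{F}$ required by (B).

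For (C), the hypothesis $\beta=\alpha+1>\beta_0$ gives $\alpha\geq\beta_0$, so both $p,q$ lie in $B(\text{Sk}(\alpha),1/8)^\circ$ and $P_\alpha(p),P_\alpha(q)$ are well defined. After the excursion reduction above, I apply a further local perturbation in small $T^*$-neighborhoods of $p$ and $q$ using Proposition \ref{changingextremes} composed with $P_\alpha$ on the ball (by Proposition \ref{ultrastability} this is $1$-Lipschitz there); this shifts the boundary values from $P_{\alpha+1}(p),P_{\alpha+1}(q)$ to $P_\alpha(p),P_\alpha(q)$ with Lipschitz cost absorbed into the same $\varepsilon$ budget. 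The main obstacle is the global excursion-rerouting: the excursions are a priori infinite in number and may switch between threading spaces sharing an anchor, so one needs a careful subdivision scheme and coherent bookkeeping to ensure that all the piecewise replacements glue into a single globally Lipschitz map with the uniform bound $\|F\|_\text{Lip}+\varepsilon$---precisely the role for which $\gamma^*$ was selected through Theorem \ref{nolipschitzmap}.
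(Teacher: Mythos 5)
Your part (A) is essentially the paper's argument and is fine: compactness of $F(T^*)$ yields a single successor ordinal $\widehat{\beta}<\mathrm{ord}(F(T^*))$ with $F(T^*)\subset B(\mathrm{Sk}(\widehat{\beta}),1/8)^\circ$, and composing with the ancestor retraction of Proposition \ref{ultrastability} does the rest (you should still remark that the image stays in $\widehat{S}$, which is exactly what Lemma \ref{EngordarS} was set up to guarantee).

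For (B) and (C) you have the correct skeleton --- excursions into $G_{\alpha+1}$ land in countably many threads of $\mathcal{T}_0$, endpoints of maximal excursions are bound to anchors, and either Proposition \ref{maximalwithentryeqexit} collapses an excursion to a single anchor or Fact \ref{choicesequence} (via Propositions \ref{changingextremes} and \ref{threadontosubinterval}) produces a gap of length at least $d(x_i,y_i)/(\|F\|_\text{Lip}+\varepsilon)$ across which a piecewise-constant replacement can jump. But you then write that ``one needs a careful subdivision scheme and coherent bookkeeping'' to glue the countably many replacements into one Lipschitz map, and you do not supply it; that is the substantive remaining step, not a formality. (It is also not ``the role for which $\gamma^*$ was selected'': $\gamma^*$ only guarantees the long gap inside each individual excursion.) The paper resolves it by taking, for each $t$ with $F(t)\in G_{\alpha+1}$, an extended interval $[a_i,b_i]_{T^*}$ maximal for $F$ and a \emph{single} thread $T^i$ (rather than components of $F^{-1}(\mathrm{Sk}(\omega_1)\setminus\mathrm{Sk}(\alpha))$, which are not intervals since $T^*$ is totally disconnected), setting $\widehat{F}(t)=F_{i(t)}(t)$ with $i(t)$ the least admissible index, and verifying the global bound by a triangle-inequality chain through the anchors $y_{i(t)}$ and $x_{i(s)}$; this chain works precisely because $F(a_i),F(b_i)$ are bound to those anchors. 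Without an argument of this kind the claim is not proved.

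There is also a concrete error in your (C): the endpoint correction cannot be ``a further local perturbation in small $T^*$-neighborhoods of $p$ and $q$.'' One has $d\big(F(p),P_\alpha(p)\big)=d\big(P_{\alpha+1}(p),P_\alpha(p)\big)$, which is a fixed positive quantity (it can be of order $1/8$), so any map agreeing with $F$ outside a neighborhood of diameter $\delta$ and sending $p$ to $P_\alpha(p)$ has Lipschitz constant blowing up as $\delta\to 0$ unless a sufficiently long gap of $T^*$ happens to lie in that neighborhood --- which is exactly what must be \emph{proved}, and only on the full maximal interval $[p,p']_{T^*}$ for $F$ and the thread through $F(p)$, via the same $\gamma^*$ dichotomy. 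Proposition \ref{changingextremes} does not perform this correction either: it only moves endpoint values to nearby points of a dense set, at the cost of shrinking the domain. The paper's proof of (C) treats $[p,p']_{T^*}$ globally (either collapsing it to $P_\alpha(p)$ by Proposition \ref{maximalwithentryeqexit}, or splitting it at a long gap between the two anchors), and must separately dispose of the degenerate case $p'=q$, where the entire thread $T^*$ maps into one thread anchored at $P_\alpha(p)$ and $P_\alpha(q)$; your sketch does not address that case.
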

Before proving this claim let us discuss its implications: The map $R_{|T^*}$ verifies the general hypothesis of the claim with $\beta=\text{ord}\{p,q\}$. Notice as well that if a function $F$ verifies 
either of the conditions $(A)$, $(B)$ or $(C)$ then the resulting map $\widehat{F}$ for any valid $\varepsilon>0$ verifies again the general conditions of the claim. In all three cases, the order of the image of the map $\widehat{F}$ produced is an ordinal strictly lower than the order of the image of $F$. 

This means that putting $F_0=R_{|T^*}$, we can define inductively $K$-Lipschitz maps $\{F_n\}_{n\in\mathbb{N}}$ such that 
$$ 
F_{n+1}=
\begin{cases}
\widehat{F_n}&\text{ if }F_n \text{ verifies (A), (B) or (C)},\\
F_n&\text{ otherwise }.
\end{cases}
$$
We may choose any valid $\varepsilon>0$ at the steps that require it.
There must exist $n_0\in\mathbb{N}$ such that $F_n=F_{n_0}$ for all $n\geq n_0$. Indeed, otherwise the sequence $\{\text{ord}(F_n)(T^*)\}_{n\in\mathbb{N}}$ is an infinite strictly decreasing sequence of ordinal numbers, which are well ordered, resulting in a contradiction. 

Therefore, the map $F_{n_0}\colon T^*\rightarrow \widehat{S}$ is a Lipschitz map with $\|F_{n_0}\|_\text{Lip}<2K$ such that $F(p)=P_\beta(p)$ and $F(q)=P_\beta(q)$ for some $\beta\geq \beta_0$ that does not verify neither $(A),~(B)$ nor $(C)$. Since it does not verify $(A)$, we have that $\text{ord}(F_{n_0})(T^*)$ is successor ordinal $\alpha+1$. This means that in order to fail $(B)$, the ordinal $\alpha+1$ must equal $\beta$. In turn, since $F_{n_0}$ does not meet the requirements of $(C)$ either, we conclude that $\beta$ (that is, the ordinal such that $F_{n_0}(p)=P_{\beta}(p)$, $F_{n_0}(q)=P_{\beta}(q)$, and the order of $F_{n_0}(T^*)$) equals $\beta_0$. 

In conclusion, $F_{n_0}\colon T^*\rightarrow \widehat{S}\cap \text{Sk}(\beta_0)$ is a Lipschitz map with $\|F_{n_0}\|_\text{Lip}<2K$ from the thread $T^*=T_{\gamma^*}(p,q)$ into $\text{Sk}(\beta_0)$ such that $F_{n_0}(p)=P_{\beta_0}(p)$ and $F_{n_0}(q)=P_{\beta_0}(q)$. This contradicts Fact \ref{choicethread}, which leads to the desired contradiction. It only remains to prove Claim \ref{reduceorderlemma}.

\begin{proof}[Proof of Claim \ref{reduceorderlemma}]
We prove each statement separately:

\begin{proof}[Proof of (A)]
Put $\alpha=\text{ord}(F(T^*))$. Then $\alpha\geq \beta$. Since $T^*$ is compact, the image $F(T^*)$ is also compact in $\text{Sk}(\omega_1)$. Therefore, there exists a point $x_0\in T^*$ such that $F(x_0)$ belongs to the limit generation $G_\alpha$ in $\text{Sk}(\omega_1)$. Put $r_0=\min\{d(F(x_0),\text{Sk}(\beta_0)),1/8\}$. Since $\beta_0$ is a successor ordinal, the number $r_0$ is strictly positive. 

Now, choose a finite set of points $\{x_i\}_{i=1}^n\subset T^*$ such that $F(T^*)\subset \bigcup_{i=1}^n B(F(x_i),r_0/2)$. For each $i=1,\dots,n$, the ordinal number
$$ \alpha_i=\min\{\alpha<\omega_1\colon d\big(F(x_i),\text{Sk}(\alpha)\big)<r_0/2\} $$
is a successor ordinal strictly smaller than the order of $F(T^*)$. Hence, if $\widehat{\beta}=\max\{\alpha_i\colon i=1,\dots,n\}$, we have that $\widehat{\beta}<\text{ord}(F(T^*))$ and $d\big(F(x),\text{Sk}(\widehat{\beta})\big)<r_0$ for all $x\in T^*$. This implies that $\widehat{\beta}$ is greater than $\beta_0$. Applying Corollary \ref{betaSkeinisRetractionofBall18}, since $r_0<1/8$, we have that the map 
\begin{align*}
    \widehat{F}\colon T^*&\longrightarrow \widehat{S}\\
    x&\longmapsto P_{\widehat{\beta}}\big(F(x)\big)
\end{align*}
is a $K$-Lipschitz map. It is well defined since the ancestor of order $\widehat{\beta}$ of each point in $F(T^*)$ is unique, and the image of any point $x\in T^*$ belongs to the set $\widehat{S}$ again since $d\big(F(x),\text{Sk}(\widehat{\beta})\big)<r_0$ for all $x\in T^*$ (see Fact \ref{widehatScontainsancestry}). We have then that $\widehat{F}(p)=P_{\widehat{\beta}}(P_\beta(p))=P_{\widehat{\beta}}(p)$, and similarly $\widehat{F}(q)=P_{\widehat{\beta}}(q)$. Finally, the order of $\widehat{F}(T^*)$ is at most $\widehat{\beta}$ as well, so it is verified that $\text{ord}(\widehat{F}(T^*))<\text{ord}(F(T^*))$.
\end{proof}
\begin{proof}[Proof of (B)]
Put $\overline{K}=\|F\|_\text{Lip}$. Suppose that the order of $F(T^*)$ is $\alpha+1$, and that $\beta<\alpha+1$. Since the image of $F$ is in $\widehat{S}$ and $\alpha+1$ is a successor ordinal, there exists a subsequence $\{n_k\}_{k\in\mathbb{N}}$ such that $F(T^*)\cap G_{\alpha+1}=F(T^*)\setminus \text{Sk}(\alpha)$ is contained in the union of threads $\bigcup_{k\in\mathbb{N}} T^{n_k}$. Informally, the ``problematic" part of $F(T^*)$ is contained in this countable set of threads (without the extreme points, since these always belong to a lower generation), which is a subfamily of the set $\mathcal{T}_0$ we have considered in the definition of $T^*$.

Hence, for any $t\in T^*$ such that $F(t)\in T^{n_{k(t)}}$ for some $n_{k(t)}\in\mathbb{N}$, we may find an extended interval $I_t$ in  $T^*$ containing $t$ such that $I_t$ is maximal for $F$ and $T^{n_{k(t)}}$. The extended interval $I_t$ is actually of the form $[a_t,b_t]_{T^*}$ with $p\leq a_t<b_t\leq q$, since if $I_t$ contains both extremes $p$ and $q$ of $T^*$, then necessarily $\{F(p),F(q)\}=\{P_\beta(p),P_\beta(q)\}\in \Gamma_{\alpha}$, so $\alpha=\beta$, and thus $I_t=[p,q]_{T^*}=T^*$. 

With this idea, since $F(T^*)$ is separable, we can define a countable family of maximal intervals $\{[a_i,b_i]_{T^*}\}_{i\in\mathbb{N}}$ in $T^*$ such that $F\big([a_i,b_i]_{T^*}\big)$ is contained in $T^{n_{k(i)}}$ for all $i\in\mathbb{N}$, and every point $t\in T^*$ such that its image $F(t)$ is in generation $G_{\alpha+1}$ is contained in $[a_i,b_i]_{T^*}$ for some $i\in\mathbb{N}$. To simplify the notation, we abuse it and write $n_{k(i)}=i$. Therefore, we will write that $F\big([a_i,b_i]_{T^*}\big)$ is contained in the thread $T^i$. Recall that the thread $T^i\in \mathcal{T}_0$ belongs to the threading space $\text{Th}(x_i,y_i)$ for every $i\in\mathbb{N}$. Again informally, we have identified a countable family of maximal intervals in $T^*$ that contain all the points whose image we need to change to prove (B). 

In the following Fact, we ``correct" the image of this countable family of maximal intervals.

\begin{fact}
    For every $i\in\mathbb{N}$, there exists a Lipschitz function $F_i\colon T^*\rightarrow \widehat{S}$ with $\|F_i\|_\text{Lip}\leq \|F\|+\varepsilon$ such that $F_i(t)=F(t)$ for all $t\in T^*\setminus [a_i,b_i]_{T^*}$ and $F_i(t)\in \{x_{i},y_{i}\}$ for all $t\in [a_i,b_i]_{T^*}$.
\end{fact}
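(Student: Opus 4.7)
The plan is to fix $i \in \mathbb{N}$ and construct $F_i$ as a modification of $F$ on $[a_i, b_i]_{T^*}$, redirecting its values into $\{x_i, y_i\}$ while leaving $F$ unchanged elsewhere. Write $\overline{K} = \|F\|_\text{Lip}$.

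I would first pin down $F(a_i)$ and $F(b_i)$. Since $F(T^*) \subseteq \text{Sk}(\alpha+1)$ and, in $\text{Sk}(\alpha+1)$, every interior point $A$ of $T^i$ has a ball neighborhood of radius $\min\{d(A,x_i), d(A,y_i)\}$ contained in $T^i$ (attachments at interior $T^i$-points only appear from step $\alpha+2$ onwards), maximality of $[a_i, b_i]_{T^*}$ forces either $F(a_i) \in \{x_i, y_i\}$ exactly (when $a_i = p$, or when $a_i$ is approached from the left by genuine $T^*$-points), or else $d(F(a_i), s) \leq \overline{K}\ell$ for some anchor $s$ (when $a_i$ is the right endpoint of a gap of length $\ell$ in $T^*$, where maximality forces $F(a_i^-) \notin T^i$ and the shortest path from $F(a_i^-)$ to $F(a_i)$ in the skein necessarily passes through an anchor). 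Let $s_a \in \{x_i, y_i\}$ be that path anchor (which coincides with $F(a_i)$ in the exact case), and define $s_b$ analogously for $b_i$.

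If $s_a = s_b = s$, define $F_i(t) = s$ on $[a_i, b_i]_{T^*}$ and $F_i = F$ elsewhere; this is a mild extension of Proposition \ref{maximalwithentryeqexit} absorbing the boundary discrepancy $d(F(a_i), s)$. If $s_a \neq s_b$, say $s_a = x_i$ and $s_b = y_i$, apply Proposition \ref{changingextremes} to $F|_{[a_i, b_i]_{T^*}} \colon [a_i, b_i]_{T^*} \to T^i$ with $S_1 = [x_i, x_i + 1/8)_{T^i}$, $S_2 = (y_i - 1/8, y_i]_{T^i}$, $D_1 = D^i_1$, $D_2 = D^i_2$, and error $\varepsilon$, to obtain $P < Q$ in $[a_i, b_i]_{T^*}$, points $\widehat{A} \in D^i_1$, $\widehat{B} \in D^i_2$, and a $(\overline{K}+\varepsilon)$-Lipschitz $\widehat{F} \colon [P, Q]_{T^*} \to T^i$ with $\widehat{F}(P) = \widehat{A}$ and $\widehat{F}(Q) = \widehat{B}$. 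Next, produce a gap $(c, c') \subseteq (P, Q)_{T^*}$ of length at least $d(x_i, y_i)/(\overline{K}+\varepsilon)$: if $[P, Q]_{T^*}$ has no such gap, Proposition \ref{threadontosubinterval} clamps $\widehat{F}$ to a $(\overline{K}+\varepsilon)$-Lipschitz map into $[\widehat{A}, \widehat{B}]_{T^i}$ without loss, and Fact \ref{choicesequence} then forces a gap of length $\geq d(\widehat{A}, \widehat{B})/(\overline{K}+\varepsilon) \geq d(x_i, y_i)/(\overline{K}+\varepsilon)$ (using $d(\widehat{A}, \widehat{B}) \geq d(x_i, y_i)$ in the thread metric). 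Set $F_i = x_i$ on $[a_i, c]_{T^*}$, $F_i = y_i$ on $[c', b_i]_{T^*}$, and $F_i = F$ outside $[a_i, b_i]_{T^*}$.

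The main obstacle is verifying $\|F_i\|_\text{Lip} \leq \overline{K} + \varepsilon$ via Proposition \ref{domainthreadisinterval}. The internal jump across $(c, c')$ is precisely calibrated by the gap-length bound. The delicate pairs are those with one endpoint outside $[a_i, b_i]_{T^*}$ and the other inside, which require a case split on whether the boundary point is regular (where $F(a_i)$ equals $s_a$ and no boundary jump arises) or a gap endpoint; in the latter case the key bound $d(F(a_i^-), s_a) = d(F(a_i^-), F(a_i)) - d(s_a, F(a_i)) \leq \overline{K}\ell$, arising from the shortest-path-through-$s_a$ equality, keeps the boundary jump within the Lipschitz budget $(\overline{K} + \varepsilon)\ell$.
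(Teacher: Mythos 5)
Your proposal is correct and follows essentially the same route as the paper: you establish that $F(a_i)$ and $F(b_i)$ are each anchored (bound) to $x_i$ or $y_i$ using maximality and the smallness of the gaps of $T^*$, collapse via Proposition \ref{maximalwithentryeqexit} when the anchors coincide, and otherwise extract a gap of length at least $d(x_i,y_i)/(\overline{K}+\varepsilon)$ via the chain Proposition \ref{changingextremes} $\to$ Proposition \ref{threadontosubinterval} $\to$ Fact \ref{choicesequence} and split $[a_i,b_i]_{T^*}$ across it, exactly as in the paper. The only cosmetic difference is that your quantitative anchoring bound $d(F(a_i),s_a)\leq\overline{K}\ell<1/8$ already yields the boundness hypothesis of Proposition \ref{maximalwithentryeqexit} verbatim, so no "mild extension" of that proposition is actually needed.
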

\begin{proof}

Fix $i\in\mathbb{N}$. Since the order of $F(T^*)$ is $\alpha+1$, we may work directly on the skein $\text{Sk}(\alpha+1)$. Here, the point $F(a_i)$, which belongs to the thread $T_i$, is bound to either $x_{i}$ or $y_{i}$ in $T^i$. To see this, notice that, since there are no gaps in $T^*$ of length greater than $(2K)^{-1}/8$, by maximality of $[a_i,b_i]_{T^*}$ in $T_{\gamma^i}(x_i,y_i)$, we have that the distance from $F(a_i)$ to $\text{Sk}(\alpha+1)\setminus T^*$ is smaller than $1/8$. Hence, by construction of the successor ordinal skein $\text{Sk}(\alpha+1)$, the distance from $F(a_i)$ to one of the two extremes of the thread $T^i$ is also smaller than $1/8$, which implies that $F(a_i)$ is bound to one of these extremes. Similarly, $F(b_i)$ is bound to either $x_i$ or $y_i$ in $T^i$. Suppose without loss of generality that $F(a_i)$ is bound to $x_i$.

There are two possibilities: either $F(b_i)$ is bound to $x_i$ as well, or $F(b_i)$ is bound to the other extreme point $y_i$. If $F(b_i)$ is bound to $x_i$, then we can apply Proposition \ref{maximalwithentryeqexit} and obtain $F_i$ with the desired properties. 

\begin{figure}
    \includegraphics[width=\textwidth]{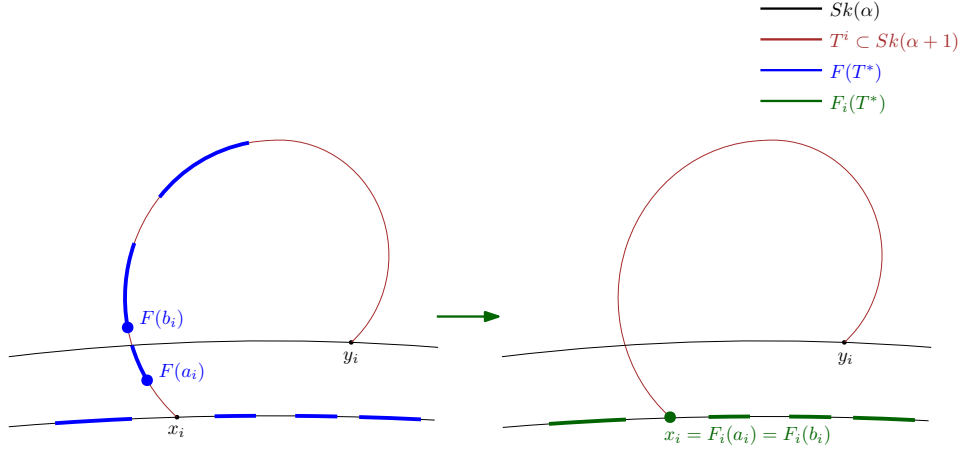}
    \caption{If both $F(a_i)$ and $F(b_i)$ are bound to the same anchor $x_i$, we may define $F_i$ by sending all points in $[a_i,b_i]_{T^*}$ to $x_i$ without increasing the Lipschitz constant.}
    \label{TransformingF_B_1}
\end{figure}

In Figure \ref{TransformingF_B_1} we observe this first possibility, and the resulting map $F_i$ according to Proposition \ref{maximalwithentryeqexit}.

Suppose now that $F(b_i)$ is bound to $y_i$ in $T^i$. We are going to show that there is a gap $C_i$ in $[a_i,b_i]_{T^*}$ with length greater than $d(x_i,y_i)/(K+\varepsilon)$. Indeed, suppose by contradiction there is no such gap.

We have that $F(a_i)$ belongs to the interval $[x_i,x_i+1/8)_{T^i}$, while $F(b_i)$ belongs to $(y_i-1/8,y_i]_{T^i}$. Recall the definition (prior to Fact 4) of the dense and countable subsets $D^i_1\subset [x_i,x_i+1/8)_{T^i}$ and $D^i_2\subset (y_i-1/8,y_i]_{T^i}$ in $T^i$, which were used to define the sequence of gaps of the thread $T^*$. Since $D^ i_1$ is dense in $[x_i,x_i+1/8)_{T^i}$, and $D^i_2$ is dense in $(y_i-1/8,y_i]_{T^i}$, considering the subset $[a_i,b_i]_{T^*}$ of $T^*$ as a thread, and restricting $F$ to this thread, we obtain by Proposition \ref{changingextremes} that there exist two points $a'_i,b'_i\in[a_i,b_i]_{T^*}$, and two points $(x'_,y'_i)\in D^i_1\times D^i_2$ with $x'_i<y'_i$, together with a $(K+\varepsilon)$-Lipschitz function $\overline{F}\colon [a'_i,b'_i]_{T^*}\rightarrow T_{\gamma^i}(x_i,y_i)$ such that $\overline{F}(a'_i)=x'_i$ and $\overline{F}(b'_i)=y'_i$. Notice that since the length of $T^i$ is $1$, and the points $x'_i$ and $y'_i$ belong to $[x_i,x_i+1/8)_{T^i}$ and $(y_i-1/8,y_i]_{T^i}$ respectively, the distance $d(x'_i,y'_i)$ is greater than $d(x_i,y_i)$. 

Finally, since we are assuming that there is no gap in $[a_i,b_i]_{T^*}$ with length greater than $d(x_i,y_i)/(K+\varepsilon)$, we can apply Proposition \ref{threadontosubinterval} and assume that $\overline{F}$ has its image contained in the thread $[x'_i,y'_i]_{T^i}$, which belongs to the family $\mathcal{T}$ we have used to define $\gamma^*$. Since the thread $[a'_i,b'_i]_{T^*}$ is a subinterval of $T^*$, its decreasing sequence of gaps $\{C^i_n\}_{n\in\mathbb{N}}$ also verifies that $\text{length}(C^i_n)<\gamma^*_n$ for all $n\in\mathbb{N}$. Hence, the existence of the function $\overline{F}$ whose Lipschitz constant does not exceed $K+\varepsilon< 2K$, implies by Fact \ref{choicesequence} that there is a gap $C^i_{n_0}$ in $[a'_i,b'_i]_{T^*}$ such that $\text{length}(C^i_{n_0})\geq d(x'_i,y'_i)/(K+\varepsilon)$.

The fact that the gap $C^i_{n_0}$ is also a gap of $[a_i,b_i]_{T^*}$ and that $d(x'_i,y'_i)\geq d(x_i,y_i)$ results in the desired contradiction.

Hence, there exist two points $c_i,d_i\in [a_i,b_i]_{T^*}$ with $c_i<d_i$ such that $(c_i,d_i)\cap (a_i,b_i)_{T^*}=\emptyset$ and $d(c_i,d_i)>d(x_i,y_i)/(K+\varepsilon)$. Define now $F_i\colon T^*\rightarrow \widehat{S}$ by 
$$
F_i(t)=
\begin{cases}
F(t)&\text{ if } t\in T^*\setminus [a_i,b_i]_{T^*},\\
x_i&\text{ if } t\in [a_i,c_i]_{T^*},\\
y_i&\text{ if } t\in [d_i,b_i]_{T^*}.
\end{cases}
$$
Using Proposition \ref{domainthreadisinterval}, maximality of $[a_i,b_i]_{T^*}$ for $F$ and $T_{\gamma^i}(x_i,y_i)$, and the fact that $F(a_i)$ and $F(b_i)$ are bound to $x_i$ and $y_i$ respectively in $T_{\gamma^i}(x_i,y_i)$, it is straightforward to check that $F_i$ verifies $\|F_i\|_\text{Lip}\leq K+\varepsilon$ (we use in fact the same argument as in the proof of Proposition \ref{maximalwithentryeqexit}).

\begin{figure}
    \includegraphics[width=\textwidth]{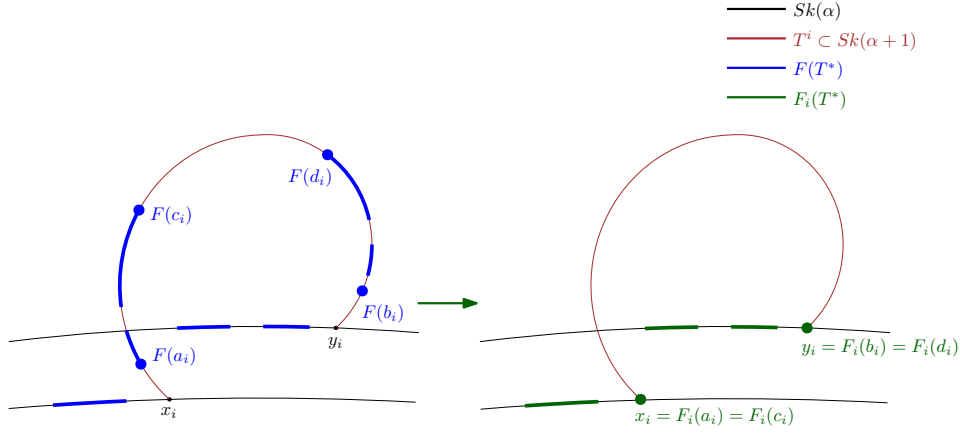}
    \caption{If $F(a_i)$ and $F(b_i)$ are bound to different anchors, then by choice of $T^*$ there must exist a gap $(c_i,d_i)$ in $[a_i,b_i]_{T^*}$ big enough to bridge the distance from $x_i$ to $y_i$ with a minimal increase of the Lipschitz constant.}
    \label{TransformingF_B_2}
\end{figure}

Figure \ref{TransformingF_B_2} intuitively summarizes the second possibility. Notice that in both Figures \ref{TransformingF_B_1} and \ref{TransformingF_B_2} the resulting map $F_i$ avoids the thread $T^i$, thus reducing the order of the image of the maximal interval $[a_i,b_i]_{T^*}$.
\end{proof}

To finish the proof of part (B) of the Claim, for each $t\in T^*$ such that $t\in [a_i,b_i]_{T^*}$ for some $i\in\mathbb{N}$, define $i(t)\in\mathbb{N}$ as the least of the natural numbers such that $t\in[a_{i(t)},b_{i(t)}]_{T^*}$. Now, define $\widehat{F}\colon T^*\rightarrow \widehat{S}$ by 
$$
\widehat{F}(t)=
\begin{cases}
F(t),&\text{ if } t\in T^*\setminus \bigg(\bigcup_{i\in\mathbb{N}}[a_i,b_i]_{T^*}\bigg),\\
F_{i(t)}(t),&\text{ if } t\in[a_i,b_i]_{T^*}\text{ for some }i\in\mathbb{N}.\\
\end{cases}
$$
To check that $\|\widehat{F}\|_\text{Lip}\leq K+\varepsilon$, we only need to consider $t,s\in T^*$ with $t<s$ and such that $t\in [a_{i(t)},b_{i(t)}]_{T^*}$ and $s\in[a_{i(s)},b_{i(s)}]_{T^*}$ with $i(t)\neq i(s)$. We have then the following inequalities:
\begin{align*}
    d\big(\widehat{F}(t),\widehat{F}(s)\big)&=d\big(F_{i(t)}(t),F_{i(s)}(s)\big)\\
    &\leq d\big(F_{i(t)}(t),y_{i(t)}\big)+d(y_{i(t)}+x_{i(s)})+d\big(x_{i(s)},F_{i(s)}(s)\big)\\
    &\leq d\big(F_{i(t)}(t),F_{i(t)}(b_{i(t)}\big)\big)+d\big(F(b_{i(t)}),F(a_{i(s)})\big)+d\big(F_{i(s)}(a_{i(s)}),F_{i(s)}(s)\big)\\
    &\leq \big(K+\varepsilon\big)\big((b_{i(t)}-t)+(a_{i(s)}-b_{i(t)})+(s-a_{i(s)})\big)\\
    &=\big(K+\varepsilon\big)(s-t).
\end{align*}
Since $d\big(\widehat{F}(p),\widehat{F}(q)\big) = d\big(P_{\beta}(p),P_\beta(q)\big)\leq d(p,q)=a_{T^*}$, we can  apply Proposition \ref{domainthreadisinterval} to obtain the desired Lipschitz constant for $\widehat{F}$ and finish the proof of (B).

\end{proof}
\begin{proof}[Proof of (C)]
The proof of the third case (C) resembles the proof of (B). The difference is that in this case at least one of $F(p)$ and $F(q)$ is in generation $\text{Sk}(\alpha+1)$, which is at the same time the order of $F(T^*)$. Intuitively, the idea of the proof of this last part is to first transform $F$ to lower the order of the image of $p$ and/or $q$. When we have done this, then we may simply apply the case (B) to the resulting map, thus obtaining a Lipschitz function whose image has a lower order than $F$.

Since $F(p)=P_{\alpha+1}(p)$, there exists $n\in\mathbb{N}$ such that $x_n=P_{\alpha}(p)$, $y_n=Q_\alpha(p)$, and $F(p)$ belongs to the thread $T^n=T_{\gamma^n}\big(P_\alpha(p),Q_\alpha(p)\big)\in\mathcal{T}_0$. We start by selecting $p'\in T^*$ such that $[p,p']_{T^*}$ is maximal for $F$ and the thread $T^n$. 

There are two possibilities: either $p'=q$, or the point $p'$ is different from $q$. If $p'=q$, since $P_\alpha(p)\neq P_\alpha(q)$, we have that the range of $F$ is contained in the single thread $T^n=T_{\gamma^n}\big(P_\alpha(p),P_\alpha(q)\big)$, and moreover $F(p)=P_{\alpha+1}(p)$ and $F(q)=P_{\alpha+1}(q)$ belong to this same thread. Since the distance from $p$ and $q$ to $\text{Sk}(\alpha)$ is less than $1/8$, we have that $F(p)$ belongs to $\big[P_{\alpha}(p),P_{\alpha}(p)+1/8\big)_{T^n}$ and $F(q)$ belongs to $\big[P_{\alpha}(q)-1/8,P_{\alpha}(q)\big)_{T^n}$. Hence, we can apply Propositions \ref{changingextremes} and \ref{threadontosubinterval} as we did in the proof of (B) to obtain two points $a',b'\in T^*$ with $a'<b'$ and two points $x',y'\in D^n_1\times D^n_2$ together with a $(K+\varepsilon)$-Lipschitz function $\overline{F}\colon [a',b']_{T^*}\rightarrow [x',y']_{T^n}$ with $\overline{F}(a')=x'$ and $\overline{F}(b')=y'$. Since the thread $[x',y']_{T^n}$ belongs to the family $\mathcal{T}$, by Theorem \ref{nolipschitzmap}, there exists a gap $C=(c,d)$ in $T^*$ such that $d(c,d)>d\big(P_\alpha(p),P_\alpha(q)\big)/(K+\varepsilon)$. Defining $\widehat{F}\colon T^*\rightarrow \widehat{S}$ as 
$$
\widehat{F}(t)=
\begin{cases}
P_{\alpha}(p),&\text{ if }t\in[p,c]_{T^*},\\
P_{\alpha}(q),&\text{ if }t\in[d,q]_{T^*}.
\end{cases}
$$
finishes the proof of (C) if $p'=q$, without need for further discussion. 

Hence, suppose now that $p'$ is not $q$. We are going to define a $(K+\varepsilon/2)$-Lipschitz function $\overline{F}_1\colon T^*\rightarrow \widehat{S}$ such that $\overline{F}_1(p)=P_{\alpha}(p)$ and $\overline{F}_1(t)=F(t)$ for all $t\in(p',q]_{T^*}$. 

In the space $\text{Sk}(\alpha+1)$, the point $F(p)=P_{\alpha+1}(p)$ is bound to $P_{\alpha}(p)$ in $T^n$ because the distance from $p$ to $\text{Sk}(\alpha)$ is less than $1/8$. In addition, the point $F(p')$ is also bound to one of the extremes $P_{\alpha}(p)$ or $Q_{\alpha}(q)$ in $T^n$. This is because there are no gaps in $T^*$ bigger than $(2K)^{-1}/8$ and $[p,p']_{T^*}$ is maximal for $F$ and $T^n$. We may consider again two possibilities: either $F(p')$ is bound to $P_{\alpha}(p)$ as well, or $F(p')$ is bound to $Q_{\alpha}(p)$. 

If $F(p')$ is bound to $P_{\alpha}(p)$, we can use Proposition \ref{maximalwithentryeqexit} to define a $K$-Lipschitz function $\overline{F}_1\colon T^*\rightarrow \widehat{S}$ with $\overline{F}_1(t)=P_{\alpha}(p)$ for all $t\in[p,p']_{T^*}$, and $\overline{F}_1(t)=F(t)$ for all $t\in (p',q]_{T^*}$; as desired. 

Suppose then that $F(p')$ is bound to $Q_{\alpha}(p)$. Then, since $F(p)\in\big[P_{\alpha}(p),P_{\alpha}(p)+1/8\big)_{T^n}$ and $F(p')\in \big[Q_{\alpha}(p)-1/8,Q_{\alpha}(q)\big)_{T^n}$, we can repeat the process we did in the proof of (B) and in the case when $p'=q$ to find a gap $C=(c,d)$ in $[p,p']_{T^*}$ such that $d(c,d)> d\big(P_\alpha(p),Q_\alpha(p)\big)/(K+\varepsilon/2)$. Again, we use this gap to define $\overline{F}_1\colon T^*\rightarrow \widehat{S}$ by 
$$
\overline{F}_1(t)=
\begin{cases}
P_{\alpha}(p),&\text{ if }t\in[p,c]_{T^*},\\
Q_{\alpha}(p),&\text{ if }t\in[d,p']_{T^*},\\
F(p),&\text{ if }t\in (p',q]_{T^*}.
\end{cases}
$$
The Lipschitz constant of $\overline{F}_1$ is less than or equal to $(K+\varepsilon/2)$ as desired. 

We may repeat the same argument to find a point $q'\in T^*$ with $p'<q'$, together with a second $(K+\varepsilon/2)$-Lipschitz function $\overline{F}_2\colon T^*\rightarrow\widehat{S}$ such that $\overline{F}_2(q)=P_{\alpha}(q)$ and $\overline{F}_2(t)=F(t)$ for all $t\in [p,q')_{T^*}$. We combine $\overline{F}_1$ and $\overline{F}_2$ to form yet another Lipschitz function $\overline{F}\colon T^*\rightarrow \widehat{S}$ in the following way:
$$ 
\overline{F}(t)=
\begin{cases}
\overline{F}_1(t),&\text{ if }t\in[p,p']_{T^*},\\
F(t),&\text{ if }t\in(p',q')_{T^*},\\
\overline{F}_2(t),&\text{ if }t\in (q',q]_{T^*}.
\end{cases}
$$
It is again straightforward to prove that the Lipschitz constant of $\overline{F}$ is less than or equal to $K+\varepsilon/2$. It is possible that the order of $\overline{F}(T^*)$ is already the desired ordinal $\alpha<\alpha+1$, in which case the proof is finished. However, it might be that there are points in $\overline{F}(T^*)$ in the generation $G_{\alpha+1}$. If this is the case, notice that the function $\overline{F}$ verifies the hypothesis of the claim and the conditions of (B). Hence, we may use the already proven case (B) with $\varepsilon/2>0$ to find a $(K+\varepsilon)$-Lipschitz function $\widehat{F}\colon T^*\rightarrow\widehat{S}$ such that $\widehat{F}(p)=P_{\alpha}(p)$, $\widehat{F}(q)=P_{\alpha}(q)$, and the order of $\widehat{F}(T^*)$ is $\alpha$. The proof is now finished.
\end{proof}
We have proven the three parts of the claim.
\end{proof}
Having proven the claim, the theorem holds by the discussion after the statement of the claim.

\end{proof}
\printbibliography
\end{document}